\definecolor{grey}{rgb}{0.6,0.6,0.6}
\newcommand{\proofend}{\hfill $\square$}
\newcommand{\retour}{\ \\ }
\newcommand{\espace}{\ }
\newcommand{\Z}[1]{\mathbb{Z}/#1\mathbb{Z}}
\newcommand{\nuu}[1]{{\small #1}}
\newcommand{\NFC}{\mathrm{NFC}}
\newcommand{\NFCA}{\mathrm{NFCA}}
\newcommand{\DiaG}[4]{[#1:#2:#3:#4]}
\newcommand{\PGL}{\mathrm{PGL}}
\newcommand{\GL}{\mathrm{GL}}
\newcommand{\dJo}{\mathbf{dJo}}
\newcommand{\im}{{\bf i}}
\newcommand{\K}{\mathbb{K}}
\newcommand{\C}{\mathbb{C}}
\newcommand{\Bir}{\mathrm{Bir}}
\newcommand{\Aut}{\mathrm{Aut}}
\newcommand{\gnu}{<\!\nu\!>}
\newcommand{\rkPic}[1]{\mathrm{rk\ Pic}(#1)}
\newcommand{\Pn}{\mathbb{P}^2}
\newcommand{\p}{\mathbb{P}}
\newcommand{\h}[1]{\hspace{-#1mm}}
\newcommand{\numpt}[1]{\ensuremath{#1}}
\newtheorem{lemm}{Lemma}[section]
\newtheorem{prop}[lemm]{Proposition}
\newtheorem{nota}[lemm]{Notation}
\newtheorem{defi}[lemm]{Definition}
\newtheorem{rema}[lemm]{Remark}
\newtheorem{coro}[lemm]{Corollary}
\newtheorem{theo}{Theorem}
\begin{document}
\title{Elements and cyclic subgroups of finite order of the Cremona group}
\author{J\'er\'emy Blanc}
\maketitle
\begin{abstract}
We give the classification of elements -- respectively cyclic subgroups -- of finite order of the Cremona group, up to conjugation. Natural parametrisations of conjugacy classes, related to fixed curves of positive genus, are provided.
\end{abstract}

\section{Introduction}
The group $\Bir(\Pn)$, called \emph{Cremona group}, is the group of birational transformations of the complex plane $\Pn=\Pn(\C)$. An element (or a subgroup) of $\Bir(\Pn)$ is said to be \emph{of de Jonqui\`eres type} if it preserves a pencil of rational curves; after conjugation the pencil may be chosen to be the set of lines passing through a point.

The study of the elements (or cyclic subgroups) of finite order of the Cremona group is a classical subject. 

\bigskip

The first work in this direction was the classification of involutions given by Bertini \cite{Bertini}. Bertini's proof is incomplete; a short and complete proof was published only recently \cite{BayBea}.

The classification of finite cyclic subgroups of prime order of $\Bir(\Pn)$ was completely achieved a few years ago (see \cite{BayBea}, \cite{Fer} and \cite{BeaBla}, and also  \cite{bib:Zha}). 

The classification of all finite cyclic subgroups which are \emph{not} of de Jonqui\`eres type was almost achieved in \cite{bib:DoI}. A list of representative elements is available: \cite[Table~9]{bib:DoI} (see also \cite[Table~1]{bib:JBCR}); explicit forms are given and the dimension of the varieties which parametrise the conjugacy classes are also provided; there are $29$ families of groups of order at most $30$\footnote[2]{There are in fact $30$ families in \cite[Table 9]{bib:DoI}, but one -- named $D_5$ -- is intrusive since it preserves a rational fibration.}. To complete the classification of these groups there remains, as \cite{bib:DoI} says, to ''Give a finer geometric description of the algebraic variety parametrizing conjugacy classes''.

  In the case of linearisable groups (groups conjugate to subgroups of $\Aut(\Pn)=\PGL(3,\C)$), there is exactly one conjugacy class for each order $n$, represented by the group generated by $(x:y:z)\mapsto (x:y:e^{2\im\pi/n}z)$ (see \cite{BeaBla}). 
  
  The remaining groups -- non-linearisable de Jonqui\`eres groups -- are less well understood so far. It is not too difficult to see that these groups have even order, and that their $2$-torsion is generated by an element which preserves any element of the invariant pencil of rational curves. There are some other geometric descriptions in \cite[Section 5]{bib:DoI}, but nothing more on the conjugacy classes of cyclic groups of de Jonqui\`eres type  (although there are more results on non-cyclic  finite groups of de Jonqui\`eres type). In \cite{bib:JBSMF}, we proved the existence of infinitely many conjugacy classes of such cyclic groups of order $2n$, for any $n$, but the parametrisation of this huge family of groups was not discussed. 
  
Concerning \emph{elements} (instead of cyclic subgroups) of finite order of $\Bir(\Pn)$, there is no further result except that two linearisable elements of the same order are conjugate (\cite{BeaBla}). For example, in the classification of cyclic groups of order $3$ or $5$ given by \cite{bib:Zha} and \cite{Fer}, there is no result concerning the possible conjugation between an element and its inverse.

\bigskip

In the present article, we complete the classification of elements and cyclic subgroups of finite order of $\Bir(\Pn)$. The main contribution concerns de Jonqui\`eres elements and is described in Section~$\ref{Sec:AutConBun}$. We apply cohomology group theory and other simple algebraic tools to the group $\PGL(2,\C(x))\rtimes \PGL(2,\C)$, which is isomorphic to the subgroup of $\Bir(\Pn)$ which preserves some fixed pencil of lines. The classification obtained is summarised in Theorem~$\ref{Thm:ExplicitdJo}$. We also describe the action on conic bundles induced by these elements (\S\ref{SubSec:GeomDesc}), and correct  a wrong assertion  
made in \cite[Theorem 5.7]{bib:DoI}. Section~$\ref{Sec:CyclicGrNotDJ}$ concerns cyclic groups which are not of de Jonqui\`eres type; we use the classification of \cite{bib:DoI} and refine it by providing the parametrisations of the $29$ families of cyclic groups (Theorem~\ref{Thm:ConjAmong29}), using some classical tools on surfaces and curves. Then, we give the conjugacy classes of elements by studying the possible conjugations between the different generators of a cyclic group (Theorem~\ref{Thm:EltGrNotDJ}, proved in Section~\ref{Sec:EltGrNotDJ}).

In such a classification, there are two steps. The first is to find representative families and to prove that each group is conjugate to one of these. The second step is to parametrise the conjugacy classes in each family, by algebraic varieties. For cyclic groups of prime order, the varieties are the moduli spaces of the non-rational curves fixed by the groups (a curve is fixed by a birational map if this map restricts to the identity on the curve). Here, we naturally generalise this invariant, by looking for the non-rational curves fixed by the non-trivial elements of the group. The main invariant, called $\NFCA$, is described in Section~\ref{Sec:results}, before the statements of the principal results. The importance of this invariant appears in Theorem \ref{Thm:NFCA}, proved in Section \ref{Sec:ProofNFCA}.

\smallskip

I thank I.~Dolgachev for his useful remarks on this article.

\section{Results}\label{Sec:results}
\subsection{The conjugacy invariant which determines the conjugacy classes}
The \emph{genus} is for us the geometric genus and we say that a birational map \emph{fixes} a curve if it restricts to the identity on the curve.
\begin{defi}[Normalised fixed curve: $\NFC$ -- already defined in \cite{Fer}]\retour
Let $g\in \Bir(\Pn)$ be a non-trivial element of finite order. If no curve of positive genus is fixed $($pointwise$)$ by $g$, we say that $\NFC(g)=\emptyset$; otherwise $g$ fixes exactly one curve of positive genus {\upshape (\cite{BayBea}, \cite{Fer})}, and $\NFC(g)$ is then the isomorphism class of the normalisation of this curve.
\end{defi}
It was proved in \cite[Proposition 2.7]{BayBea} that two involutions $g_1,g_2\in \Bir(\Pn)$ are conjugate if and only if $\NFC(g_1)=\NFC(g_2)$. If $g_1,g_2\in \Bir(\Pn)$ have the same prime order, then the groups generated by $g_1,g_2$ (but not necessarily $g_1$ and $g_2$, as we will see later in Theorem~\ref{Thm:EltGrNotDJ}) are conjugate if and only if $\NFC(g_1)=\NFC(g_2)$ (see \cite{Fer}, \cite{BeaBla}). For cyclic groups of composite order, the  $\NFC$ invariant is no longer sufficient, as observed in \cite{BeaBla}; we must therefore introduce a new one.

\begin{defi}[Normalised fixed curve with action: $\NFCA$]\retour
Let $g\in \Bir(\Pn)$ be a non-trivial element of finite order $n$. Then, $\NFCA(g)$ is the sequence of isomorphism classes of pairs $$\left((\NFC(g^k),\ g_{|_{\NFC(g^k)}})\right)_{k=1}^{n-1},$$ where $g_{|_{\NFC(g^k)}}$ is the automorphism induced by $g$ on the curve $\NFC(g^k)$ {\upshape (}if\ \ $\NFC(g^k)$ is equal to $\emptyset$, then $g$ acts trivially on it{\upshape )}.
\end{defi}

The following result -- proved in Section~\ref{Sec:ProofNFCA} -- gives a precise and simple way to decide whether two cyclic subgroups (respectively elements) of finite order of $\Bir(\Pn)$ are conjugate.
\begin{theo}[Relation between the conjugacy classes of cyclic groups and the $\NFCA$ invariant]\espace\label{Thm:NFCA}
\begin{enumerate}
\item[\upshape 1.]
Let $G, H\subset \Bir(\Pn)$ be two cyclic subgroups of the same finite order. Then, $G$ and $H$ are conjugate in $\Bir(\Pn)$ if and only if $\NFCA(g)=\NFCA(h)$ for some generators $g$ of $G$ and $h$ of $H$.
\item[\upshape 2.]
Let $g,h\in \Bir(\Pn)$ be two de Jonqui\`eres elements of the same finite order. Then, $g$ and $h$ are conjugate in $\Bir(\Pn)$ if and only if $\NFCA(g)=\NFCA(h)$.
\item[\upshape 3.]
Assertion $2$ is false if neither $g$ nor $h$ is of de Jonqui\`eres type.
\end{enumerate}
\end{theo}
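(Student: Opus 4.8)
The plan is to establish the three assertions of Theorem~\ref{Thm:NFCA} using the $\NFCA$ invariant. The overall strategy rests on the observation that $\NFCA$ is a \emph{conjugacy invariant}: if $\varphi g \varphi^{-1}=h$ for some $\varphi\in\Bir(\Pn)$, then $\varphi$ sends the curve $\NFC(g^k)$ to the curve $\NFC(h^k)$ for each $k$ and intertwines the induced automorphisms, so that the pairs $(\NFC(g^k),g_{|_{\NFC(g^k)}})$ and $(\NFC(h^k),h_{|_{\NFC(h^k)}})$ are isomorphic. This gives the ``only if'' direction of assertions~1 and~2 essentially for free, once one checks that $\varphi$ restricts to an isomorphism between fixed curves respecting the group actions --- a formal verification based on the definitions of $\NFC$ and $\NFCA$. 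The substance of the theorem is the converse.

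\medskip

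For the ``if'' direction, I would split according to whether the groups are of de Jonqui\`eres type. For de Jonqui\`eres elements, I would appeal to the explicit classification obtained in Section~\ref{Sec:AutConBun} (summarised in Theorem~\ref{Thm:ExplicitdJo}), where the relevant model is the subgroup of $\Bir(\Pn)$ isomorphic to $\PGL(2,\C(x))\rtimes\PGL(2,\C)$ preserving a pencil of lines. The plan is to show that, within each family of that classification, the $\NFCA$ invariant separates the conjugacy classes of elements completely. Concretely, I would check that the explicit normal forms are indexed by data that can be recovered from the fixed curves of positive genus and the induced actions --- the hyperelliptic (or more general) curve fixed by the $2$-torsion together with the way $g$ permutes its points --- so that equality of $\NFCA$ forces the normal forms to coincide up to conjugacy. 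This yields assertion~2, and assertion~1 in the de Jonqui\`eres case. For the non--de Jonqui\`eres cyclic groups, I would instead invoke the refined parametrisations of the $29$ families from Theorem~\ref{Thm:ConjAmong29}, verifying family by family that the moduli data parametrising each conjugacy class of \emph{groups} is exactly what $\NFCA(g)$ records for a generator $g$; this gives assertion~1 in the remaining case, completing part~1 by combining both types.

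\medskip

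For assertion~3, the task is to \emph{disprove} the statement for non--de Jonqui\`eres elements, so a single well-chosen counterexample suffices. The plan is to exhibit a cyclic group $G=\langle g\rangle$ not of de Jonqui\`eres type, together with a generator $g^k$ of $G$, such that $g$ and $g^k$ have equal $\NFCA$ (which they automatically do when $\NFCA(g)=\NFCA(g^k)$ forces conjugacy of the \emph{groups} by part~1, since $\langle g\rangle=\langle g^k\rangle$) yet are \emph{not} conjugate as elements. This is precisely the phenomenon flagged in the introduction: conjugation between a generator and a different power (e.g.\ between $g$ and $g^{-1}$) can fail even when the fixed-curve data agree. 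I would draw the counterexample from the list of $29$ families, selecting one whose generator and some power share identical fixed curves with identical induced actions but are distinguished by a finer invariant detectable only through the ambient geometry (the action on a rational surface or conic bundle structure on which $G$ is regularised); this is studied in Theorem~\ref{Thm:EltGrNotDJ}, from which the counterexample should be extracted.

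\medskip

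The main obstacle I anticipate is the converse direction for de Jonqui\`eres elements in assertion~2. Unlike the prime-order case, where $\NFC$ alone classifies groups, here the interplay between the $2$-torsion (whose fixed curve controls the geometry) and the action of $g$ on that curve must be shown to determine the conjugacy class of $g$ \emph{as an element}, not merely as a group. The delicate point is that two elements can generate the same group and fix the same curves, so I must verify that recording the \emph{action} of $g$ on \emph{each} $\NFC(g^k)$ --- rather than only on one curve --- is genuinely enough to pin down the normal form up to conjugacy in $\PGL(2,\C(x))\rtimes\PGL(2,\C)$. This is where the cohomological computations of Section~\ref{Sec:AutConBun} will do the real work, and ensuring that the $\NFCA$ data captures exactly the cohomological parameter (and no less) is the crux of the argument.
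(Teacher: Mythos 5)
Your proposal follows essentially the same route as the paper: the theorem is deduced from the three classification results (Theorem~\ref{Thm:ExplicitdJo} for de Jonqui\`eres elements, Theorem~\ref{Thm:ConjAmong29} for the $29$ families of non--de Jonqui\`eres groups, and Theorem~\ref{Thm:EltGrNotDJ} for assertion~3), with the ``only if'' direction coming from the fact that $\NFCA$ is a conjugacy invariant. The actual proof in the paper is exactly this short deduction; all the cohomological and moduli-theoretic work you describe as ``the crux'' is carried out in the proofs of those earlier theorems, not here.

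One step is missing from your case split. You divide the ``if'' direction ``according to whether the groups are of de Jonqui\`eres type,'' but this silently omits the mixed case: $g$ of de Jonqui\`eres type, $h$ not (or vice versa), with $\NFCA(g)=\NFCA(h)$. Nothing in your plan rules this out, and without ruling it out assertion~1 is not proved. The paper disposes of it by invoking Assertion~2 of Theorem~\ref{Thm:ConjAmong29}, which states precisely that a generator of one of the $29$ families never shares its $\NFCA$ with a de Jonqui\`eres element (because some power fixes a Geiser or Bertini curve, or a curve of genus $1$ or $2$ with an action incompatible with the hyperelliptic normal forms of Theorem~\ref{Thm:ExplicitdJo}). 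You should add this comparison explicitly; it is a short but necessary step.
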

\subsection{The classifications}
We can now state the explicit classification of elements and cyclic groups of finite order of $\Bir(\Pn)$. The following theorem summarises the results of Section~\ref{Sec:AutConBun}.
\begin{theo}[Classification of de Jonqui\`eres elements of finite order]\label{Thm:ExplicitdJo}\espace
\begin{enumerate}
\item[\upshape 1.]
For any positive integer $m$, there exists an unique conjugacy class of linearisable elements of order $m$, represented by the automorphism \[(x:y:z)\mapsto (x:y:e^{2\im\pi/m}z).\]
\item[\upshape 2.]
Any non-linearisable de Jonqui\`eres element of finite order of $\Bir(\Pn)$  has order $2n$, for some positive integer $n$, and is conjugate to an element $g$, such that $g$ and $g^n$ are of the following form:

$$\begin{array}{rcl}g:&(x,y)\dasharrow &\left(e^{2\im\pi/n} x, \frac{a(x)y+(-1)^{\delta}p(x^n)b(x)}{b(x)y+(-1)^{\delta}a(x)}\right),\vspace{0.1 cm} \\ 
g^n:&(x,y)\dasharrow & \left(x,\frac{p(x^n)}{y}\right),\end{array}$$
where $a,b\in \C(x)$, $\delta\in\pm\{0,1\}$, and $p$ is a polynomial with simple roots. The curve $\Gamma$ of equation $y^2=p(x^n)$ -- $($pointwise$)$ fixed by $g^n$ -- is hyperelliptic, of positive genus, and the action of $g$ on this curve has order $n$ and is not a root of the involution associated to any $g_1^2$.

Furthermore, the above association yields a parametrisation of the conjugacy classes of non-linearisable de Jonqui\`eres elements  of order $2n$ of $\Bir(\Pn)$ by isomorphism classes of pairs $(\Gamma,h)$, where $\Gamma$ is a smooth hyperelliptic curve of positive genus, and $h\in \Aut(\Gamma)$ is an automorphism of order $n$, which preserves the fibers of the $g_1^2$ and is not a root of the involution associated to the $g_1^2$.
\end{enumerate}
\end{theo}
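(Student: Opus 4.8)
Part~1 is the known classification of linearisable elements (one conjugacy class per order, recalled in the introduction), so the work is all in Part~2. The plan is to argue entirely inside the de Jonquières group itself, which by Section~\ref{Sec:AutConBun} is $\PGL(2,\C(x))\rtimes\PGL(2,\C)$: the left factor acts on the fibre coordinate $y$ by Möbius transformations with coefficients in $\C(x)$, the right factor acts on the base coordinate $x$. After conjugating the invariant pencil to the lines through a point, a de Jonquières element of finite order becomes $g=(M,\alpha)$ in this group. I would first project to the base factor: its image $\alpha$ has finite order, hence is diagonalisable, so after a further conjugation $\alpha$ is the rotation $x\mapsto e^{2\im\pi/n}x$ for some $n\ge 1$ (possibly trivial). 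Writing $J:=g^{n}$, this element lies in the fibre factor $\PGL(2,\C(x))$ and equals the twisted norm $M\cdot{}^{\alpha}M\cdots{}^{\alpha^{n-1}}M$.

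Next I would pin down the order. The element $g$ is non-linearisable exactly when $g^{n}$ fixes a curve of positive genus, and the key point is that a finite-order element of $\PGL(2,\C(x))$ whose fixed locus in $\Pn$ is an irreducible curve of positive genus must be an involution. Indeed its two fixed sections are then defined over a quadratic extension $L/\C(x)$ and are interchanged by the Galois involution; this swap would invert the eigenvalue ratio $\zeta_m$ of an order-$m$ element, but $\zeta_m$ is a constant root of unity fixed by the Galois action, so $\zeta_m=\zeta_m^{-1}$ and $m=2$. Hence $J=g^{n}$ is a fibre involution, $g$ has order $2n$, and (using the fact, recalled in the introduction, that the $2$-torsion preserves each member of the pencil) $J$ has trivial base action. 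Conjugating by a fibre automorphism I put $J$ in the shape $(x,y)\mapsto(x,\,p(x^n)/y)$, with fixed curve $\Gamma\colon y^2=p(x^n)$; the exponent $x^n$ is forced because $g$ preserves $\Gamma$ while rotating the base by $x\mapsto e^{2\im\pi/n}x$, so the branch divisor is invariant under $x\mapsto e^{2\im\pi/n}x$ and is therefore a union of orbits. Since $g$ commutes with $J=g^{n}$, the matrix $M$ lies in the normaliser of $\langle J\rangle$ inside $\PGL(2,\C(x))$; computing this normaliser yields exactly the two cosets recorded by $\delta$, and reading off $M$ gives the displayed formula. The curve $\Gamma$ is then smooth (simple roots), hyperelliptic of positive genus, and because $J$ fixes $\Gamma$ pointwise the automorphism $h:=g|_{\Gamma}$ satisfies $h^{n}=\mathrm{id}$ while inducing the order-$n$ rotation on the base; thus $h$ has order exactly $n$ and the cyclic group $\langle h\rangle$ does not contain the hyperelliptic involution $\iota$, i.e.\ $h$ is not a root of the involution of the $g_1^2$.

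For the parametrisation I would invoke Theorem~\ref{Thm:NFCA}(2), which I may assume: two de Jonquières elements of the same order are conjugate if and only if their $\NFCA$ invariants agree. For $g$ as above the only power with nontrivial base action fixes at most finitely many fibres, so the single power with a positive-genus fixed curve is $g^{n}=J$; hence $\NFCA(g)$ is entirely determined by the pair $(\NFC(g^{n}),\,g|_{\NFC(g^{n})})=(\Gamma,h)$, and conversely. The association $g\mapsto(\Gamma,h)$ is therefore well defined on conjugacy classes and injective. Surjectivity is the realisation step: given an abstract smooth hyperelliptic $\Gamma$ of positive genus with an order-$n$ automorphism $h$ preserving the $g_1^2$ and not a root of $\iota$, I would descend $h$ to $\bar h\in\Aut(\p^1)$ of order $n$, choose coordinates so that $\bar h$ is $x\mapsto e^{2\im\pi/n}x$, note that the branch points form $\bar h$-orbits so that $\Gamma$ acquires the equation $y^2=p(x^n)$, and finally use that $h$ is not a root of $\iota$ to guarantee a lift with $g^{n}=J\neq\mathrm{id}$ rather than $g^{n}$ a diagonalisable (linearisable) involution; solving the twisted norm equation $M\cdot{}^{\alpha}M\cdots{}^{\alpha^{n-1}}M=J$ then produces a $g$ in normal form realising $(\Gamma,h)$.

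The main obstacle is precisely this last, cohomological, step: solving the twisted norm equation in the one-dimensional anisotropic torus $T$ centralising $J$ (attached to the quadratic extension $L=\C(x)(\sqrt{p(x^n)})$), and checking that the residual freedom of conjugating by $T(\C(x))$, acting by $M\mapsto P\,M\,{}^{\alpha}P^{-1}$, collapses the solutions exactly onto the two-parameter family in $a,b$ together with the sign $\delta$, with no further identifications. This is a Hilbert~90 / $H^1(\langle\alpha\rangle,T)$ computation, and the delicate part is verifying that the two order-$n$ lifts of $\bar h$ differing by $\iota$ correspond to the two values of $\delta$, so that the condition ``$h$ is not a root of $\iota$'' is exactly what selects the non-linearisable, order-$2n$ realisations. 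A final point requiring care is the genus-one case, where the $g_1^2$ is not unique; this is why the statement asks that $h$ avoid being a root of the involution of \emph{any} $g_1^2$, and one must check that the different linear systems do not introduce spurious coincidences among the conjugacy classes.
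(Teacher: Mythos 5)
Your overall strategy coincides with the paper's: work inside $\dJo=\PGL(2,\C(x))\rtimes\PGL(2,\C)$, diagonalise the base action, identify $g^n$ with a twisted norm, show it must be a non-diagonalisable (hence order-two) element of $\PGL(2,\C(x))$, and reduce everything to the centraliser of that involution and a twisted norm equation. But there are two genuine gaps. First, the argument is circular: you invoke Theorem~\ref{Thm:NFCA}(2) to obtain injectivity of $g\mapsto(\Gamma,h)$ on conjugacy classes, but in the paper Theorem~\ref{Thm:NFCA}(2) is itself deduced from Theorem~\ref{Thm:ExplicitdJo}(2) (see Section~\ref{Sec:ProofNFCA}); it has no independent proof, so you may not assume it here. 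That injectivity is precisely the hard content of the theorem.

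Second, what you call ``the main obstacle'' is not a loose end but the entire technical core, and it is left undone. The paper's proof consists of: the description of the centraliser $N_\sigma=N_\sigma^0\uplus N_\sigma^1$ (Lemma~\ref{Lem:Centraliser}), where $N_\sigma^0\cong\C(x)[\sqrt{h}]^{*}/\C(x)^{*}$ is your anisotropic torus; the exactness of four norm sequences (Lemma~\ref{Lem:TechniqExactS}), which needs Tsen's theorem for surjectivity and, for the coset $N_\sigma^1$, a separate argument with a conjugate Galois action $\overline{\nu}$ that turns the twisted condition $\alpha\mapsto\alpha\cdot\nu(\alpha)^{-1}$ into an honest $H^1$; and Propositions~\ref{Prop:ConjNsigma} and~\ref{Prp:ConjIFFsameAction}, which convert this into existence and uniqueness of $g$ for each pair $(\Gamma,h)$. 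In particular, your claim that the two cosets ``record $\delta$'' hides the facts that a solution in $N_\sigma^1$ exists if and only if $n$ is even and that each coset contributes exactly one conjugacy class. A smaller but real issue: rotation-invariance of the branch divisor only yields $h=x^{\epsilon}q(x^n)$ with $\epsilon\in\{0,1\}$, and when $n$ is even and $\epsilon=1$ this is not a square times an element of $\C(x^n)$; the normalisation $h=p(x^n)$ in the paper comes from the identity $\det R\cdot\nu(\det R)\cdots\nu^{n-1}(\det R)=\mu^2h$ for a lift $R$ of $\rho$, not from the geometry of the branch locus alone. (This is also exactly where the hypothesis that $g|_\Gamma$ is not a root of the hyperelliptic involution does its work, ruling out curves such as $y^2=x(x^n-1)$ for even $n$.)
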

\begin{rema}In the above theorem, a hyperelliptic curve is a curve which admits a $(2\! :\! 1)$-map $g_1^2:\Gamma\rightarrow \mathbb{P}^1$. If the curve has genus $\geq 2$, the $g_1^2$ is unique, but otherwise it is not.\end{rema}
\begin{rema}The analogous result for finite de Jonqui\`eres cyclic groups is obvious, and follows directly from Theorem~$\ref{Thm:ExplicitdJo}$.\end{rema}

\bigskip

In the case of elements which are not of de Jonqui\`eres type, the distinction between cyclic groups and elements is more important since in general two generators of the same group are not conjugate, even if the actions on the curves are the same. Before stating the results for the remaining cases (Theorems~\ref{Thm:ConjAmong29} and \ref{Thm:EltGrNotDJ}), we recall the possible normalisations of the non-rational curves fixed by non-de Jonqui\`eres elements of finite order of $\Bir(\Pn)$:

a) elliptic curves: curves of genus $1$ -- {\it abbreviated as "ell" in Table~$\ref{ListN2}$};

b) (hyperelliptic) curves of genus $2$ -- {\it abbreviated as "gen$2$" in Table~$\ref{ListN2}$};

c) Geiser curves: non-hyperelliptic curves of genus $3$ (isomorphic to plane smooth quartic curves) -- {\it abbreviated as "Gei" in Table~$\ref{ListN2}$};

d) Bertini curves: non-hyperelliptic curves of genus $4$ whose canonical model lies on a singular quadric -- {\it abbreviated as "Ber" in Table~$\ref{ListN2}$}.
\begin{rema}
Note that if $\Gamma$ is a curve isomorphic to one of the curves in {\upshape a), b), c)} or {\upshape d)}, there are infinitely many distinct birational embeddings of $\Gamma$ into $\Pn$. The classification below {\upshape (}and also in {\upshape \cite{BayBea}} and {\upshape \cite{Fer}}{\upshape )} implies that there is only one embedding of $\Gamma$ -- up to birational equivalence -- which is fixed $($pointwise$)$ by a birational map of finite order. For example, no smooth quartic curve of $\Pn$ is fixed by an element of $\Bir(\Pn)$, but any smooth quartic curve is birational to a sextic with $7$ double points, which is fixed by a Geiser involution. For more details on this, see {\upshape \cite{bib:BPV2}}.
\end{rema}
The following result will be proved in Section~\ref{Sec:CyclicGrNotDJ}.  
\begin{theo}[Classification of cyclic subgroups of finite order which are not of de Jonqui\`eres type]\espace
\label{Thm:ConjAmong29}
\begin{enumerate}
\item[\upshape 1.]
Any non-trivial cyclic subgroup of finite order of the Cremona group which is not of de Jonqui\`eres type is conjugate to an element of one of the $29$ families of Table~$\ref{ListN1}$. 
\item[\upshape 2.]
If $g$ is a generator of a group of Table~$\ref{ListN1}$ and $h$ is a de Jonqui\`eres element, then $\NFCA(g)\not=\NFCA(h)$.
\item[\upshape 3.]
Any two of the $29$ families of Table~$\ref{ListN1}$ represent distinct families of conjugacy classes of non-de Jonqui\`eres cyclic groups in the Cremona group.
\item[\upshape 4.]
If a curve of positive genus is fixed by some non-trivial element of a group of Table~$\ref{ListN1}$, then its normalisation is either elliptic, or of genus $2$, or is a Bertini or Geiser curve. Each possibility is listed in Table~$\ref{ListN2}$.
\item[\upshape 5.]
In families which contain more than one element {\upshape (}$\#1$ through $\#18$, $\#20$ and $\#23${\upshape )}, the conjugacy classes of  subgroups $G$ belonging to the family are parametrised by isomorphy classes of pairs $(\Gamma, G|_{\Gamma})$, where $\Gamma$ is the normalisation of the unique curve of positive genus fixed by the $r$-torsion of $G$ and $G|_{\Gamma}$ is the action of $G$ on the curve; the number $r$ and the type of the pairs are given in Table~$\ref{ListN2}$. For families $\# 1$ through $\# 5$, $\#7$, $\#8$, $\#12$, $\#13$ and $\#17$, there is only one action  associated to any curve $\Gamma$, so the conjugacy class is parametrised by the curves only.
\item[\upshape 6.]
Let $G$ and $H$ be two cyclic subgroups of the same finite order of the Cremona group, which are not of de Jonqui\`eres type. Then, $G$ and $H$ are conjugate if and only if $\NFCA(g)=\NFCA(h)$ for some generators $g$, $h$ of $G$ and $H$ respectively.

\end{enumerate}
\end{theo}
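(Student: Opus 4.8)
The plan is to take the list of representatives from \cite{bib:DoI} as the starting point and to reduce every conjugacy question to the invariant $\NFCA$ via Theorem~\ref{Thm:NFCA}, so that the remaining content is the geometric computation of the curves fixed by the non-trivial elements of each family. Part~1 is essentially a citation: \cite[Table~9]{bib:DoI} already shows that a finite cyclic group which is not of de Jonqui\`eres type is conjugate to one of the listed groups, and the only adjustment is to discard the family $D_5$, which preserves a rational fibration and is therefore of de Jonqui\`eres type, leaving exactly the $29$ families of Table~\ref{ListN1}. After lifting the action to a smooth projective model and running the equivariant minimal model program, each such group acts minimally on a del Pezzo surface or a conic bundle; since a cyclic group that preserves a conic bundle structure preserves the rational pencil of fibres and is thus de Jonqui\`eres, a non-de-Jonqui\`eres group must act minimally on a del Pezzo surface of some degree $d$, and the explicit forms of \cite{bib:DoI} tell us $d$ and the order in each case.

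For Part~4 I would, family by family, describe the curves of positive genus fixed by the non-trivial elements directly on the del Pezzo model. The key tool is adjunction: a fixed curve lies in the ramification locus of the relevant quotient and its genus is controlled by its class in the Picard group; because the anticanonical degree is small ($1\le d\le 8$) this forces the normalisation to be one of the four listed types --- elliptic, genus~$2$, Geiser (a plane quartic arising from the degree-$2$ del Pezzo realised as a double cover of $\Pn$), or Bertini (genus~$4$, from the degree-$1$ del Pezzo as a double cover of a quadric cone). Recording, for each power $g^k$, which of these occurs yields the entries of Table~\ref{ListN2}. Part~2 then follows by comparison with Theorem~\ref{Thm:ExplicitdJo}: a de Jonqui\`eres element fixes only \emph{hyperelliptic} curves (with the $g_1^2$ and fibre-preserving action described there), so whenever a family of Table~\ref{ListN1} fixes a Geiser or Bertini curve its $\NFCA$ cannot equal that of a de Jonqui\`eres element. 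For the families fixing only elliptic or genus-$2$ curves, which are also hyperelliptic, I would instead compare the full sequences $(\NFC(g^k),g_{|\NFC(g^k)})_k$: the pattern of \emph{which} powers fix a curve of positive genus, together with the induced action, does not match the pattern forced on a de Jonqui\`eres element by Theorem~\ref{Thm:ExplicitdJo} (where the only involution fixing a curve is $g^n$); in particular no family of Table~\ref{ListN1} has empty $\NFCA$, so it is distinguished even from the linearisable de Jonqui\`eres elements.

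Parts~3, 5 and 6 rest on Theorem~\ref{Thm:NFCA}(1), which reduces conjugacy of two cyclic groups of equal order to the equality $\NFCA(g)=\NFCA(h)$ for suitable generators. For Part~3 I would separate the families using the discrete invariants already extracted --- the degree of the del Pezzo model, the order of the group, and the type(s) of fixed curve from Table~\ref{ListN2} --- which take distinct values on distinct families. For Part~5 the task is to show that, within a fixed family, the invariant $\NFCA$ is equivalent to the pair $(\Gamma,G_{|\Gamma})$, where $\Gamma$ is the normalisation of the curve fixed by the $r$-torsion. One direction is immediate, since $\NFCA$ visibly determines this pair; for the converse I would reconstruct the whole surface with its action from $\Gamma$ and $G_{|\Gamma}$ --- via the double-cover descriptions (branch quartic for Geiser, branch genus-$4$ curve for Bertini, and the analogous anticanonical constructions in the elliptic and genus-$2$ cases) --- and read off all the other fixed-curve data, thereby recovering $\NFCA$. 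For the families marked in Part~5 the action of $G$ on $\Gamma$ is canonically determined, so the curve alone suffices. Part~6 is then Theorem~\ref{Thm:NFCA}(1) combined with Part~5.

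The main obstacle I expect is the converse direction of Part~5: proving that the pair $(\Gamma,G_{|\Gamma})$ determines the group up to conjugacy, and that every admissible pair is realised. This is where the geometry is genuinely needed, since one must reconstruct the del Pezzo surface and the embedding of $\Gamma$ from the abstract curve-with-action, check that this reconstruction is unique up to the relevant equivalence, and verify surjectivity onto the stated moduli of pairs --- all carried out case by case over the $29$ families, with the low-degree del Pezzo surfaces (degrees~$1$ and $2$, as double covers) being the delicate cases. The accompanying bookkeeping, showing that the resulting $\NFCA$ is really captured by the single pair rather than by the longer sequence, is the crux of the argument.
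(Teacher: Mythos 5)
Your treatment of Parts~1, 2 and~4 is essentially the paper's: quote the classification of \cite{bib:DoI} minus the intrusive $D_5$ family, compute the fixed curves by adjunction on the minimal del Pezzo models of degree $1$, $2$, $3$, and compare the resulting fixed-curve pattern with Theorem~\ref{Thm:ExplicitdJo}. The one genuine logical defect is your statement that Parts~3, 5 and~6 ``rest on Theorem~\ref{Thm:NFCA}(1)'': in the paper the implication runs the other way. Theorem~\ref{Thm:NFCA} is proved in Section~\ref{Sec:ProofNFCA} by invoking Assertions~2 and~6 of the present theorem, so using it here is circular. The non-circular route is the one you in fact sketch afterwards --- reconstructing the surface together with its action from the pair $(\Gamma,G|_{\Gamma})$ --- and this is precisely what the paper does, case by case: for the Geiser and Bertini families the curve determines the surface as a double cover and the exact sequence $1\to\Z{2}\to\Aut(S)\to\Aut(\Gamma)\to 1$ transfers conjugacy of $H\subset\Aut(\Gamma)$ to conjugacy of $G\subset\Aut(S)$; for the elliptic curves on cubic surfaces one needs the Hesse-form statements of Lemma~\ref{Lem:CubicHesse}; families $\#6$, $\#9$, $\#12$, $\#14$ require individual arguments. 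Once Part~5 is established this way, Part~6 follows from Parts~2, 3 and~5 alone, with no appeal to Theorem~\ref{Thm:NFCA}; you should reorder the logic accordingly.

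Two smaller points. For Part~3 you propose to use the degree of the del Pezzo model as a separating invariant; this is not a priori an invariant of birational conjugacy, and the paper instead separates families of equal order by the types of curves fixed by the various torsion subgroups (which \emph{are} read off from $\NFCA$). For Part~2, the only delicate case is family $\#9$, where --- exactly as for a non-linearisable de Jonqui\`eres element of order $6$ --- only the $2$-torsion fixes a curve of positive genus and that curve is hyperelliptic (elliptic); the distinction there comes entirely from the induced action, a translation of order $3$ without fixed points, which preserves the fibres of no $g_1^2$. Your blanket phrase ``together with the induced action'' covers this in principle, but the case deserves to be isolated, since for every other family a coarser invariant (a Geiser or Bertini curve, or positive-genus curves fixed by $m$-torsion with $m\geq 3$) already suffices.
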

\begin{sidewaystable}
\centering
\begin{tabular}{|llllll|l|cc}
\hline
\multirow{2}{*}{\begin{sideways}order\end{sideways}} &  {\it not.\ of} & {\it not.\ of} &  {\it generator of} & {\it equation of}& {\it in the }&\#  \\
& {\upshape \cite{bib:JBCR}} &{\upshape \cite{bib:DoI}} & {\it the group}  & {\it the surface} & {\it space }&\\
\hline
2& ${\numpt{2.G}}$ &$A_1^7$& $\DiaG{-1}{1}{1}{1}$& $w^2=L_4(x,y,z)$& $\mathbb{P}(2,1,1,1)$&\nuu{1}  \\
2& ${\numpt{1.B}}$ &$A_1^8$& $\DiaG{-1}{1}{1}{1}$& $w^2=z^3+zL_4(y,z)+L_6(y,z)$& $\mathbb{P}(3,1,1,2)$ &\nuu{2} \\
\hline 
3&${\numpt{3.3}}$ &$3A_2$& $\DiaG{\omega}{1}{1}{1}$& $w^3+L_3(x,y,z)$& $\mathbb{P}^3$&\nuu{3}  \\
3&$\numpt{1.\rho}$ &$4A_2$& $\DiaG{1}{1}{1}{\omega}$& $w^2=z^3+L_6(x,y)$& $\mathbb{P}(3,1,1,2)$&\nuu{4}  \\
\hline 
4&$\numpt{2.4}$ &$2A_3\!+\!A_1$& $\DiaG{1}{1}{1}{\im}$ & $w^2=L_4(x,y)+z^4$ & $\mathbb{P}(2,1,1,1)$&\nuu{5}  \\
4&$\numpt{1.B2.2}$ &$2D_4(a_1)$& $\DiaG{\im}{1}{-1}{-1}$ & $w^2=z^3+zL_2(x^2,y^2)+xyL_2'(x^2,y^2)$ & $\mathbb{P}(3,1,1,2)$&\nuu{6}  \\
\hline 
5&${\numpt{1.5}}$ &$2A_4$& $\DiaG{1}{1}{\zeta_5}{1}$ & $w^2=z^3+\lambda x^4z+x(\mu x^5+y^5)$ & $\mathbb{P}(3,1,1,2)$&\nuu{7}\\
\hline 
6&${\numpt{3.6.1}}$ &$E_6(a_2)$& $\DiaG{\omega}{1}{1}{-1}$& $w^3+x^3+y^3+xz^2+\lambda yz^2$& $\mathbb{P}^3$&\nuu{8}  \\
6&${\numpt{3.6.2}}$ &$A_5\!+\!A_1$& $\DiaG{1}{-1}{\omega}{\omega^2}$& $wx^2+w^3+y^3+z^3+\lambda wyz$& $\mathbb{P}^3$&\nuu{9}  \\
6&${\numpt{2.G3.1}}$ &$E_7(a_4)$& $\DiaG{-1}{1}{1}{\omega}$& $w^2=L_4(x,y)+z^3L_1(x,y)$& $\mathbb{P}(2,1,1,1)$&\nuu{10}  \\
6&${\numpt{2.G3.2}}$ &$D_6(a_2)\!+\!A_1$& $\DiaG{-1}{1}{\omega}{\omega^2}$& $w^2=x(x^3+y^3+z^3)+yzL_1(x^2,yz)$& $\mathbb{P}(2,1,1,1)$&\nuu{11}  \\
6&${\numpt{2.6}}$ &$A_5\!+\!A_2$& $\DiaG{-1}{\omega}{1}{-1}$& $w^2=x^3y+y^4+z^4+\lambda y^2z^2$& $\mathbb{P}(2,1,1,1)$&\nuu{12}  \\
6&$\numpt{1.\sigma\rho}$ &$E_8(a_8)$& $\DiaG{-1}{1}{1}{\omega}$ & $w^2=z^3+L_6(x,y)$ & $\mathbb{P}(3,1,1,2)$ &\nuu{13} \\
6&$\numpt{1.\rho 2}$ &$E_6(a_2)+A_2$& $\DiaG{1}{1}{-1}{\omega}$ & $w^2=z^3+L_3(x^2,y^2)$ & $\mathbb{P}(3,1,1,2)$&\nuu{14}  \\
6&${\numpt{1.B3.1}}$ &$2D_4$& $\DiaG{-1}{1}{\omega}{1}$ & $w^2=z^3+xL_1(x^3,y^3)z+L_2(x^3,y^3)$ & $\mathbb{P}(3,1,1,2)$&\nuu{15}  \\
6&${\numpt{1.B3.2}}$ &$2D_4$& $\DiaG{-1}{1}{\omega}{\omega}$ & $w^2=z^3+\lambda x^2 y^2z+L_2(x^3,y^3)$ & $\mathbb{P}(3,1,1,2)$&\nuu{16}  \\
6&${\numpt{1.6}}$  &$A_5\!+\!A_2\!+\!A_1$& $\DiaG{1}{1}{-\omega}{1}$ & $w^2=z^3+\lambda x^4z+\mu x^6+y^6$ & $\mathbb{P}(3,1,1,2)$&\nuu{17}  \\
\hline 
8&${\numpt{1.B4.2}}$  &$D_8(a_3)$& $\DiaG{\zeta_8}{1}{\im}{-\im}$ & $w^2=z^3+\lambda x^2y^2z +xy(x^4+y^4)$ & $\mathbb{P}(3,1,1,2)$&\nuu{18}  \\ 
\hline 
9&${\numpt{3.9}}$  &$E_6(a_1)$& $\DiaG{\zeta_9}{1}{\omega}{\omega^2}$ & $w^3+xz^2+x^2y+y^2z$ & $\mathbb{P}^3$&\nuu{19}  \\
\hline 
10&${\numpt{1.B5}}$  &$E_8(a_6)$& $\DiaG{-1}{1}{\zeta_5}{1}$ & $w^2=z^3+\lambda x^4z+x(\mu x^5+y^5)$ & $\mathbb{P}(3,1,1,2)$&\nuu{20}  \\
\hline 
12&${\numpt{3.12}}$  &$E_6$& $\DiaG{\omega}{1}{-1}{\im}$ & $w^3+x^3+yz^2+y^2x$ & $\mathbb{P}^3$&\nuu{21}  \\
12&${\numpt{2.12}}$  &$E_7(a_2)$& $\DiaG{1}{\omega}{1}{\im}$ & $w^2=x^3y+y^4+z^4$ & $\mathbb{P}(2,1,1,1)$&\nuu{22}  \\
12&${\numpt{1.\sigma\rho 2.2}}$  &$E_8(a_3)$& $\DiaG{\im}{1}{-1}{-\omega}$ & $w^2=z^3+xyL_2(x^2,y^2)$ & $\mathbb{P}(3,1,1,2)$ &\nuu{23} \\
\hline 
14&${\numpt{2.G7}}$  &$E_7(a_1)$& $\DiaG{-1\h{0.5}}{\h{0.5}\zeta_7\h{0.5}}{\h{0.5}(\zeta_7)^4\h{0.5}}{\h{0.5}(\zeta_7)^2}$ & $w^2=x^3y+y^3z+xz^3$ & $\mathbb{P}(2,1,1,1)$&\nuu{24}\\
\hline 
15&${\numpt{1.\rho 5}}$  &$E_8(a_5)$& $\DiaG{1}{1}{\zeta_5}{\omega}$ & $w^2=z^3+x(x^5+y^5)$ & $\mathbb{P}(3,1,1,2)$ &\nuu{25} \\
\hline 
18&${\numpt{2.G9}}$  &$E_7$& $\DiaG{-1}{(\zeta_9)^6}{1}{\zeta_9}$ & $w^2=x^3y+y^4+xz^3$ & $\mathbb{P}(2,1,1,1)$ &\nuu{26} \\
\hline 
20&${\numpt{1.B10}}$  &$E_8(a_2)$& $\DiaG{\im}{1}{\zeta_{10}}{-1}$ & $w^2=z^3+x^4z+xy^5$ & $\mathbb{P}(3,1,1,2)$ &\nuu{27} \\
\hline 
24&${\numpt{1.\sigma\rho 4}}$  &$E_8(a_1)$& $\DiaG{\zeta_{8}}{1}{\im}{-\im\omega}$ & $w^2=z^3+xy(x^4+y^4)$ & $\mathbb{P}(3,1,1,2)$&\nuu{28}  \\
\hline 
30&${\numpt{1.\sigma\rho 5}}$  &$E_8$& $\DiaG{-1}{1}{\zeta_5}{\omega}$ & $w^2=z^3+x(x^5+y^5)$ & $\mathbb{P}(3,1,1,2)$ &\nuu{29} \\
\hline
\end{tabular}

\caption{The list of finite cyclic subgroups of the Cremona group which are not of de Jonqui\`eres type, viewed as automorphism groups of rational surfaces embedded into weighted projective spaces. The parameters $\lambda, \mu \in \C$ and the homogeneous forms $L_i$ of degree $i$ are such that the surfaces are smooth, and $\DiaG{a}{b}{c}{d}$ denotes the automorphism $(w:x:y:z)\mapsto (aw:bx:cy:dz)$.}\label{ListN1} \end{sidewaystable} 

\begin{sidewaystable}
\centering
\begin{tabular}{|lll|l|l|l|l|l|}
\hline
\multirow{2}{*}{\begin{sideways}order\end{sideways}}  &  {\it generator of} & {\it equation of}& {\it curve }&{\it curves fixed}& \multicolumn{2}{l|}{\it parametrisation by pairs of $r$-torsion}&\#  \\
& {\it the group}  & {\it the surface} & {\it fixed }& {\it by $n$-torsion }& $r$&type of pairs &  \\
\hline
2& $\DiaG{-1}{1}{1}{1}$& $w^2=L_4(x,y,z)$& Geiser&&2&Geiser&\nuu{1}  \\
2& $\DiaG{-1}{1}{1}{1}$& $w^2=z^3+zL_4(y,z)+L_6(y,z)$& Bertini &&$2$&Bertini&\nuu{2} \\
\hline 
3& $\DiaG{\omega}{1}{1}{1}$& $w^3+L_3(x,y,z)$& elliptic &&$3$&elliptic&\nuu{3}  \\
3& $\DiaG{1}{1}{1}{\omega}$& $w^2=z^3+L_6(x,y)$& genus $2$&&$3$&genus $2$&\nuu{4}  \\
\hline 
4& $\DiaG{1}{1}{1}{\im}$ & $w^2=L_4(x,y)+z^4$ & elliptic&&$4$&elliptic &\nuu{5}  \\
4& $\DiaG{\im}{1}{-1}{-1}$ & $w^2=z^3+zL_2(x^2,y^2)+xyL_2'(x^2,y^2)$ && 2:Ber &$2$& (Ber, involution with fixed points)&\nuu{6}  \\
\hline 
5& $\DiaG{1}{1}{\zeta_5}{1}$ & $w^2=z^3+\lambda x^4z+x(\mu x^5+y^5)$ & elliptic&&$5$&elliptic &\nuu{7}\\
\hline 
6& $\DiaG{\omega}{1}{1}{-1}$& $w^3+x^3+y^3+xz^2+\lambda yz^2$&&2,3:ell &$3$&elliptic&\nuu{8}  \\
6& $\DiaG{1}{-1}{\omega}{\omega^2}$& $wx^2+w^3+y^3+z^3+\lambda wyz$& &2:ell&$2$&(ell, gr. of order $3$ without fix.pts.)&\nuu{9}  \\
6& $\DiaG{-1}{1}{1}{\omega}$& $w^2=L_4(x,y)+z^3L_1(x,y)$& &2:Gei, 3:ell&$2$&(Gei, gr. of order 3 with $4$ fix.pts)&\nuu{10}  \\
6& $\DiaG{-1}{1}{\omega}{\omega^2}$& $w^2=x(x^3+y^3+z^3)+yzL_1(x^2,yz)$& &2:Gei&$2$&(Gei, gr. of order $3$ with $2$ fix.pts)&\nuu{11}  \\
6& $\DiaG{-1}{\omega}{1}{-1}$& $w^2=x^3y+y^4+z^4+\lambda y^2z^2$& &3:ell&$3$&elliptic&\nuu{12}  \\
6& $\DiaG{-1}{1}{1}{\omega}$ & $w^2=z^3+L_6(x,y)$ &&2:Ber, 3:gen2 &$3$&gen2&\nuu{13} \\
6& $\DiaG{1}{1}{-1}{\omega}$ & $w^2=z^3+L_3(x^2,y^2)$ &&2:ell, 3:gen2 &$3$ &(gen2, involution with $4$ fix.pts)&\nuu{14}  \\
6& $\DiaG{-1}{1}{\omega}{1}$ & $w^2=z^3+xL_1(x^3,y^3)z+L_2(x^3,y^3)$ &&2:Ber, 3:ell&$2$&(Ber, gr. of order $3$ with $3$ fix.pts) &\nuu{15}  \\
6& $\DiaG{-1}{1}{\omega}{\omega}$ & $w^2=z^3+\lambda x^2 y^2z+L_2(x^3,y^3)$ & &2:Ber&$2$&(Ber, gr. of order $3$ with $1$ fix.pt)&\nuu{16}  \\
6& $\DiaG{1}{1}{-\omega}{1}$ & $w^2=z^3+\lambda x^4z+\mu x^6+y^6$ & elliptic&&$6$&elliptic&\nuu{17}  \\
\hline 
8& $\DiaG{\zeta_8}{1}{\im}{-\im}$ & $w^2=z^3+\lambda x^2y^2z +xy(x^4+y^4)$ & &2:Ber&$2$&(Ber, gr. of order $4$ with $2$ fix.pts)&\nuu{18}  \\ 
\hline 
9& $\DiaG{\zeta_9}{1}{\omega}{\omega^2}$ & $w^3+xz^2+x^2y+y^2z$ & &3:ell&$1$&{\it only one conjugacy class}&\nuu{19}  \\
\hline 
10& $\DiaG{-1}{1}{\zeta_5}{1}$ & $w^2=z^3+\lambda x^4z+x(\mu x^5+y^5)$ & &2:Ber, 5:ell&$2$&(Ber, gr. of order $5$ with $4$ fix.pts)&\nuu{20}  \\
\hline 
12& $\DiaG{\omega}{1}{-1}{\im}$ & $w^3+x^3+yz^2+y^2x$ & &2,3:ell&$1$&{\it only one conjugacy class}&\nuu{21}  \\
12& $\DiaG{1}{\omega}{1}{\im}$ & $w^2=x^3y+y^4+z^4$ & &3,4:ell&$1$&{\it only one conjugacy class}&\nuu{22}  \\
12& $\DiaG{\im}{1}{-1}{-\omega}$ & $w^2=z^3+xyL_2(x^2,y^2)$ && 2:Ber, 3:ell &$2$&(Ber, gr. of order $6$ with $2$ fix.pts)&\nuu{23} \\
\hline 
14& $\DiaG{-1\h{0.5}}{\h{0.5}\zeta_7\h{0.5}}{\h{0.5}(\zeta_7)^4\h{0.5}}{\h{0.5}(\zeta_7)^2}$ & $w^2=x^3y+y^3z+xz^3$ & &2:Ber&$1$&{\it only one conjugacy class}&\nuu{24}\\
\hline 
15& $\DiaG{1}{1}{\zeta_5}{\omega}$ & $w^2=z^3+x(x^5+y^5)$ & &3,5:ell&$1$&{\it only one conjugacy class}&\nuu{25} \\
\hline 
18& $\DiaG{-1}{(\zeta_9)^6}{1}{\zeta_9}$ & $w^2=x^3y+y^4+xz^3$ &&2:Ber&$1$&{\it only one conjugacy class}&\nuu{26} \\
\hline 
20& $\DiaG{\im}{1}{\zeta_{10}}{-1}$ & $w^2=z^3+x^4z+xy^5$ &&2:Ber&$1$&{\it only one conjugacy class} &\nuu{27} \\
\hline 
24& $\DiaG{\zeta_{8}}{1}{\im}{-\im\omega}$ & $w^2=z^3+xy(x^4+y^4)$ &&2:Ber, 3:ell&$1$&{\it only one conjugacy class}&\nuu{28}  \\
\hline 
30& $\DiaG{-1}{1}{\zeta_5}{\omega}$ & $w^2=z^3+x(x^5+y^5)$ &&2:Ber, 5,6:ell&$1$&{\it only one conjugacy class}&\nuu{29} \\
\hline
\end{tabular}

\caption{The parametrisation of the conjugacy classes for each of the $29$ families of Table~$\ref{ListN1}$.}\label{ListN2} \end{sidewaystable} 

Now that the classes of the cyclic groups have been determined, there remains -- in order to give the conjugacy classes of the elements -- to decide whether or not two generators of the same group are conjugate. This is done in the following result, proved in Section~\ref{Sec:EltGrNotDJ}.
\begin{theo}[Classification of elements of finite order which are not of de Jonqui\`eres type]\retour\label{Thm:EltGrNotDJ}
Let  $G$ be a cyclic group acting on the surface $S$, embedded in a weighted projective space {\upshape (}$\mathbb{P}^3$, $\mathbb{P}(2,1,1,1)$ or $\mathbb{P}(3,1,1,2)${\upshape )}, such that the pair $(G,S)$ represents an element of Family $\# n$ of Tables~$\ref{ListN1}$, $\ref{ListN2}$ {\upshape (}with $1\leq n\leq 29${\upshape )}.
Let $g,h$ be two generators of $G$.

 Then, $g,h$ are conjugate in $\Bir(S)$ if and only if they are conjugate in $\Aut(S)$. Furthermore, we have:
\begin{enumerate}
\item[\upshape 1.]
If $n\in\{1,2\}$, then $g=h$.
\item[\upshape 2.]
If $n\in\{9,11,16\}$, then $g$ is conjugate to $h$.
\item[\upshape 3.]
If $n\notin \{6,9,11,16\}$, then $g$ is conjugate to $h$ if and only if $g=h$.
\item[\upshape 4.]
If $n=6$ then the equation of $S$ is given by $w^2=z^3+z(ax^4+bx^2y^2+cx^4)+xy(a'x^4+b'x^2y^2+c'y^4)$ in $\mathbb{P}(3,1,1,2)$ for some $a,b,c,a',b',c'\in \C$. Moreover the two generators of $G$ are conjugate if and only if one of the following occurs:
\begin{center}\begin{tabular}{rl}{\upshape (i)}& $a=c=0$,\\ {\upshape (ii)}& $a'=c'=0$,\\ {\upshape (iii)} & $a,c,a',c'\in\C^{*}$, $a/c=a'/c'$,\\ {\upshape (iv)} & $a,c,a',c'\in\C^{*}$, $a/c=-a'/c'$, and $bb'=0$.\end{tabular}\end{center}
\end{enumerate}
\end{theo}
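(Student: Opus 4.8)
The plan is to reduce the statement to conjugacy inside the finite group $\Aut(S)$ and then to an explicit eigenvalue/weight computation, the genuinely delicate case being Family $\#6$.

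\textbf{Step 1: reduction from $\Bir(S)$ to $\Aut(S)$.} First I would prove the equivalence ``conjugate in $\Bir(S)$ iff conjugate in $\Aut(S)$''. If $g$ and $h$ are two generators of $G$ and $\phi\in\Bir(S)$ satisfies $\phi g\phi^{-1}=h$, then $\phi\langle g\rangle\phi^{-1}=\langle h\rangle=G=\langle g\rangle$, so $\phi$ normalises $G$. By Theorem~\ref{Thm:ConjAmong29}, $G$ is realised as a group of automorphisms of one of the del Pezzo surfaces $S$ of Table~\ref{ListN1}, which is $G$-minimal with $\rkPic{S}^{G}=1$ and, since $G$ is not of de Jonqui\`eres type, is not $G$-equivariantly birational to a conic bundle. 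Such a pair $(S,G)$ is $G$-birationally rigid: the map $\phi$ is an isomorphism of $G$-surfaces between $S$ with the $g$-action and $S$ with the (conjugate) $h$-action, hence a biregular automorphism of the underlying $S$. Thus $N_{\Bir(S)}(G)$ acts on $G$ through $N_{\Aut(S)}(G)$, and conjugacy of the generators in $\Bir(S)$ coincides with conjugacy in $\Aut(S)$.

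\textbf{Step 2: the general mechanism and the rigid families (assertions 1 and 3).} In $\Aut(S)$ the generator $g$ is diagonal, so every diagonal (weighted scaling) automorphism centralises it; a conjugator carrying $g$ to $g^{k}$ must therefore genuinely permute the eigenspaces of $g$. Since conjugacy is insensitive to the rescaling $(\lambda^{w_0},\dots,\lambda^{w_3})$ of the weighted coordinates, ``$g\sim g^{k}$'' becomes: $g^{k}$ is, up to such a rescaling, a coordinate permutation of $g$ induced by an element of $\Aut(S)$. For Families $\#1,\#2$ the group has order $2$ with a unique non-trivial element, so $g=h$. For every family with $n\notin\{6,9,11,16\}$ I would exhibit a rescaling-invariant obstruction — typically the eigenvalue ratio attached to the distinguished cover coordinate (e.g.\ $w$ in the cyclic covers) — which $\Aut(S)$ preserves but which $g^{k}$ fails to share unless $k\equiv1$; hence $g\sim g^{k}$ forces $g=h$. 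This is a finite, family-by-family verification, uniform in spirit.

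\textbf{Step 3: the families $\#9,\#11,\#16$ (assertion 2).} For $\#9$ and $\#11$ the defining equation is symmetric in $y,z$, so the transposition $y\leftrightarrow z$ lies in $\Aut(S)$ for all admissible parameters and sends $\DiaG{1}{-1}{\omega}{\omega^2}$ (resp.\ $\DiaG{-1}{1}{\omega}{\omega^2}$) to its inverse. For $\#16$ a scaled transposition $x\leftrightarrow y$, combined with the rescaling $\lambda=\omega$, conjugates $\DiaG{-1}{1}{\omega}{\omega}$ to its inverse, and the free scalars suffice to preserve $w^2=z^3+\lambda x^2y^2z+L_2(x^3,y^3)$ for every smooth member. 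As the order is $6$, the only generators are $g$ and $g^{-1}$, so all generators are conjugate.

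\textbf{Step 4: Family $\#6$ (assertion 4), the main obstacle.} Here $G$ has order $4$ and the generators are $g=\DiaG{\im}{1}{-1}{-1}$ and $g^{-1}$, whose eigenvalues differ only on $w$. An eigenvalue/weight analysis shows that any conjugator must be a scaled transposition $w\mapsto ew,\ x\mapsto sy,\ y\mapsto tx,\ z\mapsto fz$: the grading and the $g$-eigenvalues exclude every additive (unipotent) term of $\Aut(S)$. Such a map automatically conjugates $g$ to $g^{-1}$, so $g\sim g^{-1}$ holds exactly when one of these maps preserves $S$. Imposing $E\circ\alpha=\mu E$ on $E:\,w^2-z^3-zL_2(x^2,y^2)-xyL_2'(x^2,y^2)$ yields a system linking $(e,s,t,f,\mu)$ to $(a,b,c,a',b',c')$, and the crux is to decide its solvability: when $a=c=0$ or $a'=c'=0$ the corresponding outer equations vanish and the system is always solvable, giving (i),(ii); when $a,c,a',c'\in\C^{*}$ the two middle equations force $t^2/s^2$ to equal both $a/c$ and $a'/c'$, possible iff $a/c=a'/c'$, giving (iii), whereas in the opposite-sign case $a/c=-a'/c'$ solvability survives only after a middle coefficient is killed, producing the extra condition $bb'=0$ of (iv). I expect this final solvability analysis — the sign and consistency bookkeeping that separates (iii) from (iv) and forces $bb'=0$ — to be the hard part; once the reduction to $\Aut(S)$ and the scaled-permutation normal form are in place, the other families are comparatively routine.
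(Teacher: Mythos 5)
Your Steps 2--4 follow essentially the same route as the paper: the trivial order-$2$ families, the $y\leftrightarrow z$ symmetry of the equations for $\#9$ and $\#11$, a scaled transposition for $\#16$, the eigenvalue-on-the-distinguished-coordinate obstruction for the families with a unique conjugacy class of generator, and for $\#6$ the reduction of any conjugator to a scaled swap of $x$ and $y$ followed by the proportionality analysis of the transformed equation that yields conditions (i)--(iv), including the $bb'=0$ clause in the opposite-sign case. That part of the proposal is sound and matches the paper's computation.

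The genuine gap is in Step 1. You assert that $(S,G)$ is ``$G$-birationally rigid'' and deduce that any $\phi\in\Bir(S)$ conjugating $g$ to $h$ ``is a biregular automorphism of the underlying $S$''. That deduction conflates rigidity with superrigidity, and these pairs are in general \emph{not} $G$-superrigid: blowing up one or two $G$-fixed points of $S$ (which exist in abundance, e.g.\ on the fixed curves) so as to reach a del Pezzo surface of degree $2$ or $1$, and applying its Geiser or Bertini involution, produces a $G$-equivariant birational self-map of $S$ that is not biregular. Hence $N_{\Bir(S)}(G)$ is strictly larger than $N_{\Aut(S)}(G)$, and your claim fails as stated. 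The paper's argument is finer: one factorises $\phi$ into elementary $G$-equivariant links (Iskovskikh--Mori), observes that for a surface of del Pezzo type with $\rkPic{S}^G=1$ which is not $G$-fibred in conics the only available links are exactly these Geiser/Bertini involutions on blow-ups at invariant points, and then uses the fact that a Geiser (resp.\ Bertini) involution of a del Pezzo surface of degree $2$ (resp.\ $1$) is \emph{central} in the automorphism group of that surface, so each such link conjugates $g$ to itself and contributes nothing to the conjugation. Only then does conjugacy in $\Bir(S)$ collapse to conjugacy in $\Aut(S)$. Without this centrality argument (or an equivalent substitute) your reduction is unproved, and everything downstream depends on it.
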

We conclude this section with a direct consequence of the four theorems.
\begin{coro}\label{Coro:Odd}
Let $g$ be an element of odd order of $\Bir(\Pn)$. Then, $g$ is conjugate to $g^{-1}$ if and only if $g$ is linearisable.\proofend
\end{coro}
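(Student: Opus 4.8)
The plan is to prove both implications by splitting the element $g$ according to its type (linearisable, non-linearisable de Jonqui\`eres, or not of de Jonqui\`eres type) and to read off the answer in each case from the three classification theorems. The structural observation underlying everything is that $g^{-1}$ always has the same order as $g$ and shares its type: if $g$ is conjugate to a diagonal automorphism then so is $g^{-1}$, because $\PGL(3,\C)$ is closed under inversion; and if $g$ generates a cyclic group conjugate to one of the $29$ families of Tables~\ref{ListN1} and \ref{ListN2}, then $g^{-1}$ is simply another generator of that same group. I also record that linearisable elements are themselves of de Jonqui\`eres type, since a finite-order automorphism of $\Pn$ is diagonalisable and hence preserves the pencil of lines through one of its fixed points.

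For the \emph{if} direction, suppose $g$ is linearisable of order $m$. Then $g^{-1}$ is linearisable of the same order $m$, and by Theorem~\ref{Thm:ExplicitdJo}(1) there is a single conjugacy class of linearisable elements of each order; hence $g$ is conjugate to $g^{-1}$. (The oddness of the order plays no role in this direction.)

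For the \emph{only if} direction I would argue by contraposition: assuming $g$ has odd order and is \emph{not} linearisable, I will show $g\not\sim g^{-1}$. First, $g$ cannot be a non-linearisable de Jonqui\`eres element, since by Theorem~\ref{Thm:ExplicitdJo}(2) every such element has even order $2n$. As $g$ is not linearisable either, and linearisable elements are of de Jonqui\`eres type, $g$ must fail to be of de Jonqui\`eres type. Then Theorem~\ref{Thm:ConjAmong29}(1) places $\langle g\rangle$ in one of the $29$ families, and scanning the orders in Table~\ref{ListN1} shows that the only odd-order families are $\#3$, $\#4$ (order $3$), $\#7$ (order $5$), $\#19$ (order $9$) and $\#25$ (order $15$). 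Realising $g$ as an automorphism of the corresponding rational surface $S$ and using that $\Bir(S)\cong\Bir(\Pn)$ preserves conjugacy classes, Theorem~\ref{Thm:EltGrNotDJ} applies; since each of the relevant family indices lies outside $\{6,9,11,16\}$, part~(3) gives that two generators of $\langle g\rangle$ are conjugate if and only if they are equal. As $g$ has odd order $m\ge 3$, we have $g^{-1}=g^{m-1}\neq g$, so $g$ is not conjugate to $g^{-1}$, completing the contrapositive.

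The argument is essentially bookkeeping, with the heavy lifting already done in Theorems~\ref{Thm:ExplicitdJo}, \ref{Thm:ConjAmong29} and \ref{Thm:EltGrNotDJ}; the one point I expect to require care is the exhaustive verification, against Table~\ref{ListN1}, that every odd-order non-de Jonqui\`eres family has index outside the exceptional set $\{6,9,11,16\}$, so that the clean dichotomy of Theorem~\ref{Thm:EltGrNotDJ}(3) is genuinely available and no family forces $g\sim g^{-1}$ despite $g\neq g^{-1}$.
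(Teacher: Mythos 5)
Your argument is correct and is exactly the intended one: the paper states the corollary as a direct consequence of the classification theorems and gives no separate proof, and your bookkeeping (linearisable case via Theorem~\ref{Thm:ExplicitdJo}(1); odd order forces non--de Jonqui\`eres type via Theorem~\ref{Thm:ExplicitdJo}(2); the odd-order families $\#3,\#4,\#7,\#19,\#25$ all avoid the exceptional set of Theorem~\ref{Thm:EltGrNotDJ}, so $g\sim g^{-1}$ would force $g=g^{-1}$) fills in precisely the steps the author leaves implicit. No gaps.
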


\section{The study of de Jonqui\`eres groups}\label{Sec:AutConBun}
\subsection{The group $\dJo$}\label{SubSec:dJo}
The group of birational transformations of the plane that leave invariant a fixed pencil of lines consists, in affine coordinates, of elements of the form
\begin{equation}\Big(x,y\Big)\dasharrow \Big(\frac{ax+b}{cx+d},\frac{\alpha(x)y+\beta(x)}{\gamma(x)y+\delta(x)}\Big),\label{FormdJon}\end{equation}
where $a,b,c,d\in \C$, $\alpha,\beta,\gamma,\delta \in \C(x)$, $ad-bc\not=0$ and $\alpha\delta-\beta\gamma\not=0$. This group is called the \emph{de Jonqui\`eres group}, is denoted by $\dJo$ and is isomorphic to 
$$\PGL(2,\C(x))\rtimes \PGL(2,\C),$$
where the action of $\PGL(2,\C)$ on $\PGL(2,\C(x))$ is the action of $\PGL(2,\C)=\Aut(\mathbb{P}^1(\C))$ on the function field $\C(x)$ of $\mathbb{P}^1(\C)$. The explicit form (\ref{FormdJon}) embeds $\dJo$ into $\Bir(\C^2)$, this latter being conjugate to $\Bir(\Pn)$ after the choice of an embedding $\C^2\hookrightarrow \Pn$.

We will use algebraic tools on the semi-direct product structure of $\dJo$, groups of matrices and Galois cohomology to study its elements of finite order. 
One can proceed in another manner, using geometric tools. 
If $g\in \dJo$ has finite order, the natural embedding of $\C^2$ into $\p^1\times \p^1$ allows us to consider $g$ as an element of $\Bir(\p^1\times\p^1)$ which preserves the first ruling. The blow-up $\eta\colon S\to \p^1\times\p^1$ of all the base-points of the powers of $g$ conjugates $g$ to a biregular automorphism $g'=\eta^{-1} g\eta$ of $S$. Moreover, $g'$ preserves the fibration $\pi\colon S\to \p^1$ which is the composition of $\eta$ with the first projection of $\p^1\times\p^1$. Contracting some curves in fibres of $\pi$, we may assume that $\pi$ is a conic bundle (i.e.\ that every singular fibre of $\pi$ is the union of two transversal $(-1)$-curves), and that $g'$ acts minimally on $(S,\pi)$ (i.e.\ that for any singular fibre $F=F_{1}\cup F_{2}$ of $\pi$, the two components $F_{1}$ and $F_{2}$ are exchanged by some power of $g$). Furthermore, we have $\pi g'=\bar{g}\pi$, where $\bar{g}\in \Aut(\p^1)=\PGL(2,\C)$ corresponds to the image of $g$ by the second projection of $\dJo=\PGL(2,\C(x))\rtimes \PGL(2,\C)$. 

\bigskip

In this section, we characterise the conjugacy classes of the elements of finite order of $\dJo$, and prove Theorem~\ref{Thm:ExplicitdJo}. The first step is to give an explicit form of all non-diagonalisable elements. This is done in Proposition~\ref{Prop:JonqConj2cases}. We show in particular that such an element $g$ has order $2n$, where its action on the basis of the fibration has order $n$, and that $\sigma=g^n$  fixes (pointwise) a curve $\Gamma$, which is a double covering of $\p^1$ by means of the fibration. In particular, $g|_{\Gamma}$ is an automorphism of $\Gamma$ which is not a root of the involution associated to the double covering. This observation is not new and can also be deduced by geometric methods (as in \cite{bib:DoI} and \cite{bib:BlaLin}). In fact, $g$ may be seen as an automorphism of a conic bundle $(S,\pi)$, and $\sigma$ exchanges the two components of $2k$ fibres, where $k\geq 0$, and $\Gamma$ is reducible if $k=0$ and irreducible of genus $k-1$ if $k\geq 1$. Moreover, $\pi$ has $2k$, $2k+1$ or $2k+2$ singular fibres, and all three possibilities occur (see Section~$6$ of \cite{bib:BlaLin}, and in particular Proposition~6.5).

To prove Theorem~\ref{Thm:ExplicitdJo}, we have to go into more detail. We prove that the correspondence between $g$ and the pair  $(\Gamma,g|_{\Gamma})$, endowed with  the action of order $n$ is $1$-to-$1$. Using geometric methods, this is possible when $n$ is equal to $1$ (\cite{BayBea}), but here these methods do not apply directly. We therefore have to use Galois cohomology, as in \cite{bib:BeaP} and \cite{bib:JBSMF}. The correspondence generalises the work in \cite{bib:JBSMF}, which proved the existence of some $g$ of any possible even order $2n$, starting with particular pairs $(\Gamma,g|_{\Gamma})$.

\subsection{Explicit form for non-diagonalisable elements of finite order of $\dJo$}
Although the elements of finite order of $\PGL(2,\C(x))$ and $\PGL(2,\C)$ are quite easy to understand, it is not obvious to describe the conjugacy classes of elements of finite order of $\dJo$. The projection of any subgroup $G\subset\dJo$ on $\PGL(2,\C)$ gives rise to an exact sequence
\[1\rightarrow G_1\rightarrow G \rightarrow G_2\rightarrow 1,\]
where $G_1,G_2$ belong respectively to $\PGL(2,\C(x))$ and $\PGL(2,\C)$. In the case where the exact sequence splits, and $H^1(G_2,\PGL(2,\C(x))=\{1\}$ (which occurs if $G_2$ is finite),
one can conjugate $G$ to $G_1\rtimes G_2\subset \dJo$ \cite[Lemma 2.5]{bib:BeaP}. The hard case is when this exact sequence does not split, even when $G$ is finite and cyclic (which is the case we consider here). The hardest case will be when $G_{1}$ has order $2$, and $G_{2}$ has even order.

 \begin{nota}\label{Not:ActionPGL}
 Note that $\PGL(2,\C(x))$ and $\PGL(2,\C)$ embed naturally into $\dJo$ via respectively the map that associates
 \begin{center}
 \begin{tabular}{llcl}
& $\left(\begin{array}{cc} \alpha & \beta \\  \gamma & \delta \end{array}\right)\in\PGL(2,\C(x))$ & to & $(x,y)\dasharrow \left(x,\frac{\alpha(x)y+\beta(x)}{\gamma(x)y+\delta(x)}\right),$\vspace{0.1cm}\\ 
and & $\left(\begin{array}{cc} a & b \\ c & d \end{array}\right)\in\PGL(2,\C)$&to&$(x,y)\dasharrow \left(\frac{ax+b}{cx+d},y\right).$\end{tabular}
 \end{center}
 
 This induces a semi-direct product structure.
 As in any semi-direct product, any element $\psi\in\dJo$ is equal to $\rho\nu$ for some unique $\rho\in\PGL(2,\C(x))$, $\nu\in \PGL(2,\C)$, and will be written $\psi=\left(\rho,\nu\right)$. This yields the usual multiplication $(\rho,\nu)(\rho',\nu')=(\rho\cdot \nu(\rho'),\nu\nu')$.
 Notice that  $\PGL(2,\C)$ acts on $\PGL(2,\C(x))$ via its action on $\C(x)$ given by $\nu(f)=f\circ \nu^{-1}$. 
 
If $\rho=\left(\begin{array}{cc} \alpha & \beta \\ \gamma & \delta \end{array}\right)\in \PGL(2,\C(x))$ and $\nu=\left(\begin{array}{cc} a & b \\ c & d \end{array}\right)\in \PGL(2,\C)$, the map  $(x,y)\dasharrow \left(\frac{ax+b}{cx+d},\frac{\alpha(x)y+\beta(x)}{\gamma(x)y+\delta(x)}\right)$ is equal to $\nu\rho$ and corresponds to $(\nu\rho\nu^{-1},\nu)=(\nu(\rho),\nu)$; it is equal to $\rho\nu=(\rho,\nu)$ if and only if $\nu(\rho)=\rho$. 
\end{nota}

Firstly, we recall the following basic algebraic result on the group $\PGL(2)$. We leave the proof -- which is an elementary exercise in linear algebra, close to the proof of \cite[Lemma 3.1]{bib:BPV} -- to the reader.
\begin{lemm}\label{Lem:PGL2General}
Let $\K$ be some field of characteristic zero that contains all the roots of unity. 
\begin{enumerate}
\item[\upshape 1.]
Any element $\alpha\in \PGL(2,\K)$ of finite order is either diagonalisable or is an involution conjugate to $\left(\begin{array}{cc} 0 & g \\ 1& 0\end{array}\right)$, for some element $g\in \K^{*}$. \\
Furthermore, the two possibilities are distinct, except for $\left(\begin{array}{cc} 0 & h^2 \\ 1& 0\end{array}\right)$ and $\left(\begin{array}{cc} 1 & 0 \\ 0& -1\end{array}\right)$, which are conjugate, for any $h\in \K^{*}$.
\item[\upshape 2.]
For $a,b\in \K^{*}$, the elements $\left(\begin{array}{cc} 1 & 0 \\ 0& a\end{array}\right),\left(\begin{array}{cc} 1 & 0 \\ 0& b\end{array}\right)\in \PGL(2,\K)$ are conjugate in $\PGL(2,\K)$ if and only if $\vphantom{\Big)}a=b^{\pm 1}$.
\item[\upshape 3.]
For $a,b\in \K^{*}$, the elements $\left(\begin{array}{cc} 0 & a \\ 1& 0\end{array}\right),\left(\begin{array}{cc} 0 & b \\ 1& 0\end{array}\right)\in \PGL(2,\K)$ are conjugate in $\PGL(2,\K)$ if and only if $a/b$ is a square in $\K$.\proofend
\end{enumerate}
\end{lemm}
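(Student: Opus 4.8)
The plan is to lift each class in $\PGL(2,\K)$ to a matrix in $\GL(2,\K)$ and to exploit that a finite-order class admits a lift $A$ with $A^{n}=\lambda I$ for some $\lambda\in\K^{*}$, together with the standing hypothesis that $\K$ contains all roots of unity. Everything reduces to tracking eigenvalues and their ratios, the only genuine subtlety being the arithmetic over the non-closed field $\K$.

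For the first assertion I would fix such a lift $A$ of $\alpha$. Its minimal polynomial divides $X^{n}-\lambda$, which is separable in characteristic zero, so $A$ is diagonalisable over $\overline{\K}$; over $\K$ there are two cases, according to whether its characteristic polynomial splits. If it splits, $A$ is diagonalisable over $\K$ and we are in the first case. If it is irreducible, the two eigenvalues $\mu_{1},\mu_{2}$ form a Galois-conjugate pair generating a quadratic extension of $\K$. Since $(\mu_{1}/\mu_{2})^{n}=1$ and $\K$ contains all roots of unity, $\zeta:=\mu_{1}/\mu_{2}$ lies in $\K$, hence is fixed by the nontrivial Galois automorphism of $\K(\mu_1)$; but that automorphism sends it to $\mu_{2}/\mu_{1}=\zeta^{-1}$, so $\zeta^{2}=1$. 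The value $\zeta=1$ would make $A$ scalar, contradicting irreducibility, so $\zeta=-1$, giving $\mathrm{tr}(A)=0$ and $A^{2}=-\det(A)\,I$; thus $\alpha$ is an involution, and the non-scalar matrix $A$ is conjugate over $\K$ to the companion matrix $\begin{pmatrix} 0 & -\det A \\ 1 & 0\end{pmatrix}$, which is the stated antidiagonal form with $g=-\det A$. For distinctness, I would note that $\begin{pmatrix} 0 & g \\ 1 & 0\end{pmatrix}$ has eigenvalues $\pm\sqrt{g}$, so it is diagonalisable over $\K$ if and only if $g$ is a square; when $g=h^{2}$ it is conjugate to $\mathrm{diag}(h,-h)=[\mathrm{diag}(1,-1)]$, which is exactly the claimed overlap, and otherwise the two cases are disjoint.

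For the second assertion, the two classes are diagonal, so conjugacy in $\PGL(2,\K)$ amounts to the unordered eigenvalue pairs agreeing up to a common scalar: $\{1,a\}=\{\lambda,\lambda b\}$ for some $\lambda\in\K^{*}$, which forces $a=b$ or $ab=1$, i.e. $a=b^{\pm 1}$; the conjugation is realised by the identity or by the swap matrix $\begin{pmatrix}0&1\\1&0\end{pmatrix}$. For the third assertion, the forward implication is the explicit conjugation by $\mathrm{diag}(c,1)$ when $a/b=c^{2}$. For the converse I would argue via fixed points: the involution $x\mapsto g/x$ on $\p^{1}$ has fixed locus $\{\pm\sqrt{g}\}$, and any conjugating $M\in\PGL(2,\K)$ must carry the fixed locus of $x\mapsto b/x$ onto that of $x\mapsto a/x$. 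When $b$ is a non-square, $\sqrt{b}$ generates the quadratic field $\K(\sqrt{b})$, and $M(\sqrt{b})\in\{\pm\sqrt{a}\}$ again lies in $\K(\sqrt{b})$, forcing $\K(\sqrt{a})=\K(\sqrt{b})$, equivalently $a/b\in(\K^{*})^{2}$; when $b$ is a square, the first assertion forces $a$ to be a square as well, and the ratio of two squares is a square.

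There is no deep obstacle here, consistent with the statement being elementary. The only point demanding care beyond routine linear algebra is the passage to the non-closed field $\K$: deciding precisely when the eigenvalues lie in $\K$ in the first assertion, and identifying the square-class of the fixed-point pair as the conjugacy invariant in the converse of the third. Both are controlled by the hypothesis that $\K$ contains all roots of unity, which is exactly what collapses the Galois-conjugate eigenvalue ratio to $\pm 1$ and thereby pins down the two forms of finite-order elements.
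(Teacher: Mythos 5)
Your argument is correct. The paper does not actually prove this lemma---it is explicitly left to the reader as ``an elementary exercise in linear algebra, close to the proof of \cite[Lemma 3.1]{bib:BPV}''---and your eigenvalue/Galois bookkeeping is precisely the intended filling-in: lift to $\GL(2,\K)$, use the roots-of-unity hypothesis to force the eigenvalue ratio of a finite-order element with irreducible characteristic polynomial to be $-1$ (hence trace zero, hence an involution in companion form), and read off the square class of $g$ as the invariant distinguishing the antidiagonal classes. The only spot to phrase a little more carefully is the converse of Assertion 3: from $M(\sqrt{b})=\pm\sqrt{a}\in\K(\sqrt{b})$ you get only the inclusion $\K(\sqrt{a})\subseteq\K(\sqrt{b})$, so to upgrade it to equality you must first exclude the case where $a$ is a square while $b$ is not; this is exactly what the symmetric form of your ``when $b$ is a square'' clause (via the diagonalisability criterion of Assertion 1) supplies, and once both are non-squares, writing $\sqrt{a}=u+v\sqrt{b}$ and squaring gives $2uv=0$ with $v\neq 0$, hence $a/b=v^{2}$.
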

Secondly, we give a first description of the elements of finite order of $\dJo$, which will be made more precise later.
\begin{prop}\label{Prop:JonqConj2cases}
Let $g\in \dJo=\PGL(2,\C(x))\rtimes \PGL(2,\C)$ be an element of finite order. Then, up to conjugation in the group $\dJo$, $g$ is  one of the following:
\begin{enumerate}
\item[{\upshape 1.}]
A diagonal element $\left(\left(\begin{array}{cc} 1 & 0 \\ 0& \alpha\end{array}\right), \left(\begin{array}{cc} 1 & 0 \\ 0& \beta\end{array}\right)\right)$, for some $\alpha,\beta\in \C^{*}$ of finite order.
\item[{\upshape 2.}]
An element $\left(\rho, \left(\begin{array}{cc} 1 & 0 \\ 0& \xi\end{array}\right)\right)$, where $\xi \in \C^{*}$ is an element of finite order $n\geq 1$, where $g^n$ is equal to the involution $\sigma=\left(\begin{array}{cc} 0 & p(x^n) \\ 1& 0\end{array}\right)\in \PGL(2,\C(x))$, and $p$ is a polynomial with simple roots. Furthermore, $\rho$ commutes  with $\sigma$ in $\PGL(2,\C(x))$ and if $n$ is even we may assume that $p(0)\not=0$.
\end{enumerate}
\end{prop}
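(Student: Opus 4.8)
The plan is to read off the structure of $g=(\rho,\nu)$ from its image $\nu\in\PGL(2,\C)$ and from the element $\sigma:=g^n\in\PGL(2,\C(x))$, applying Lemma~\ref{Lem:PGL2General} once over $\C$ and once over $\C(x)$. First I normalise $\nu$: since $\C$ is algebraically closed every element is a square, so by part~1 of Lemma~\ref{Lem:PGL2General} every off-diagonal involution is conjugate to $\left(\begin{smallmatrix}1&0\\0&-1\end{smallmatrix}\right)$ and \emph{every} finite-order element of $\PGL(2,\C)$ is diagonalisable. Conjugating $g$ by a suitable $\mu\in\PGL(2,\C)\subset\dJo$, which replaces $(\rho,\nu)$ by $(\mu(\rho),\mu\nu\mu^{-1})$, I may assume $\nu=\left(\begin{smallmatrix}1&0\\0&\xi\end{smallmatrix}\right)$ with $\xi$ a primitive $n$-th root of unity, $n=\mathrm{ord}(\nu)$. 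The multiplication rule gives $g^k=(\rho\,\nu(\rho)\cdots\nu^{k-1}(\rho),\nu^k)$, so $\sigma=g^n=(N,1)$ with $N=\rho\,\nu(\rho)\cdots\nu^{n-1}(\rho)\in\PGL(2,\C(x))$ of finite order; moreover, since $\sigma$ is central in $\langle g\rangle$, expanding $g\sigma=\sigma g$ yields the relation $\rho\,\nu(\sigma)\rho^{-1}=\sigma$, which I use repeatedly.

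Now I apply part~1 of Lemma~\ref{Lem:PGL2General} to $\sigma$ (legitimate, as $\C(x)$ contains all roots of unity). If $\sigma$ has order $\geq 3$ it is not an involution, hence diagonalisable; after a conjugation inside $\PGL(2,\C(x))$ (which modifies $\rho$ but preserves the central relation) I may take $\sigma=\left(\begin{smallmatrix}1&0\\0&\lambda\end{smallmatrix}\right)$, and since a non-constant rational function has no constant power, $\lambda\in\C^*$ is a root of unity $\neq\pm1$. Then $\nu(\sigma)=\sigma$, so the central relation forces $\rho$ to centralise $\sigma$, hence to be diagonal; writing $\rho$ as multiplication by $r\in\C(x)^*$ on the second coordinate and applying the multiplicative Hilbert~90 for the cyclic extension $\C(x)/\C(x^n)$ (norm-one elements are coboundaries), I absorb the $x$-dependence of $r$ and reach a constant diagonal element, i.e.\ case~1. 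If $\sigma=1$ then $g$ has order $n$ and the sequence $1\to\langle\sigma\rangle\to\langle g\rangle\to\langle\nu\rangle\to1$ splits trivially, so the vanishing $H^1(\langle\nu\rangle,\PGL(2,\C(x)))=\{1\}$ recalled above (\cite[Lemma 2.5]{bib:BeaP}, itself a consequence of Tsen's theorem $\mathrm{Br}(\C(x^n))=0$) conjugates $g$ to $(1,\nu)$, again case~1.

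The remaining possibility — $\sigma$ an involution — is the main case and produces case~2; this is the non-split situation ($\langle\sigma\rangle$ of order $2$, $\langle\nu\rangle$ of possibly even order) singled out in the text, where the splitting lemma is unavailable. By part~1 of Lemma~\ref{Lem:PGL2General} I conjugate $\sigma$ inside $\PGL(2,\C(x))$ — adjusting $\rho$ by the same twisted conjugation — to the off-diagonal form $\left(\begin{smallmatrix}0&q\\1&0\end{smallmatrix}\right)$ for some $q\in\C(x)^*$ (this includes the case where $\sigma$ was diagonalisable, corresponding to $q$ a square). Its conjugacy invariant is the class $[q]\in\C(x)^*/(\C(x)^*)^2$ (part~3 of Lemma~\ref{Lem:PGL2General}); since $\C^*/(\C^*)^2$ is trivial, this class is represented by a squarefree polynomial, unique up to a scalar. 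The central relation makes $\nu(\sigma)$ conjugate to $\sigma$, so $\nu(q)/q$ is a square and $[q]$ is $\nu$-invariant; the roots of its squarefree representative therefore form a union of $\langle\xi\rangle$-orbits, and since each nonzero orbit $\{a\xi^j\}_{j}$ contributes a factor $x^n-a^n$, I may take $q=x^{\varepsilon}p(x^n)$ with $p$ having simple roots, $p(0)\neq 0$, and $\varepsilon\in\{0,1\}$ recording a possible branch point at the $\nu$-fixed point $x=0$.

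It remains to eliminate $\varepsilon$ and to prove the commutation. Lifting $\rho$ to $\hat\rho\in\GL(2,\C(x))$ and taking determinants in $\hat\rho\,\nu(\hat\rho)\cdots\nu^{n-1}(\hat\rho)=s\left(\begin{smallmatrix}0&q\\1&0\end{smallmatrix}\right)$ gives $N_{\C(x)/\C(x^n)}(\det\hat\rho)=-s^2q$, so $[q]=[N_{\C(x)/\C(x^n)}(\det\hat\rho)]$ because $-1$ is a square in $\C$. A norm lies in $\C(x^n)$, hence its order at $x=0$ is a multiple of $n$; when $n$ is even that order is even, so $\varepsilon=0$ and $p(0)\neq 0$, whereas when $n$ is odd the factor $x^{\varepsilon}$ is harmless since $x^{n-1}$ is a square, allowing me to write $q=p(x^n)$ with $p(t)=t\,p_1(t)$ (still simple roots). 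A final conjugation by a diagonal $\left(\begin{smallmatrix}1&0\\0&h\end{smallmatrix}\right)$ with $h^2=q/p(x^n)$ puts $\sigma$ exactly in the stated form with $q=p(x^n)\in\C(x^n)$; then $\nu(q)=q$, so $\nu(\sigma)=\sigma$, and the central relation collapses to $\rho\,\sigma\rho^{-1}=\sigma$, i.e.\ $\rho$ commutes with $\sigma$. I expect the only serious obstacle to be this involution case: the extension is genuinely non-split, the cohomology-vanishing shortcut does not apply, and the square-class computation together with the norm identity above is exactly what is needed to control $q$ and, in particular, to rule out a branch point at $x=0$ when $n$ is even.
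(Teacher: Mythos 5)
Your proposal is correct and follows essentially the same route as the paper: normalise $\nu$ to a diagonal element, split into cases according to the order of $\sigma=g^n$, use Hilbert~90 and Tsen's theorem (triviality of $H^1(\gnu,\PGL(2,\C(x)))$ and of the norm-one obstruction) for the cases $\sigma=1$ and $\sigma$ of order $\geq 3$, and normalise the involution $\sigma$ via Lemma~\ref{Lem:PGL2General} together with the determinant/norm identity $\prod_i\nu^i(\det\hat\rho)=-s^2q$. The only (harmless) difference is internal to the involution case: you first pin down the square class of $q$ as $x^{\varepsilon}p(x^n)$ by analysing the $\langle\xi\rangle$-orbits of the roots of its squarefree representative and then use the norm identity to kill $\varepsilon$ when $n$ is even, whereas the paper uses the same identity first to make $h$ lie in $\C(x^n)$ and only then strips the factor $x^n$.
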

\begin{proof}
We write $g=(\rho,\nu)\in \PGL(2,\C(x))\rtimes \PGL(2,\C)$. Observe that the order of $\nu\in \PGL(2,\C)$ is finite, and denote it by $n$; up to a conjugation by an element of  $\PGL(2,\C)$, we may assume that $\nu=\left(\begin{array}{cc} 1 & 0 \\ 0& \xi\end{array}\right)$, where $\xi$ is a primitive $n$-th root of unity.

Assume first that $n=1$, which is equivalent to assuming that $\nu$ is the identity, or that  $g$ belongs to $\PGL(2,\C(x))$. If $g$ is diagonalisable in $\PGL(2,\C(x))$, it is conjugate to $\left(\begin{array}{cc} 1 & 0 \\ 0& \alpha\end{array}\right)$, where $\alpha \in \C(x)^{*}$ is an element of finite order and in particular belongs to $\C^{*}$; this yields an element of type (1). 
If $g$ is not diagonalisable, it is conjugate to $\left(\begin{array}{cc} 0 & p(x) \\ 1& 0\end{array}\right)$, for some $p(x)\in \C(x)^{*}$ which is not a square (Lemma~\ref{Lem:PGL2General}); up to conjugation by a diagonal element of $\PGL(2,\C(x))$ we may assume that $p$ is a polynomial without multiple roots, whence $g$ is of type $(2)$.

Assume now that the order $n$ of $\nu$ is at least equal to $2$.

The element $g^n$ is equal to $(\sigma,1)$, where $\sigma=\rho\cdot \nu(\rho)\cdot\nu^2(\rho)\cdots\nu^{n-1}(\rho)$ is an element of finite order of $\PGL(2,\C(x))$. 
Notice that $\nu(\sigma)=\nu(\rho\cdot \nu(\rho)\cdot\nu^2(\rho)\cdots\nu^{n-1}(\rho))=\rho^{-1}\cdot \sigma \cdot\rho$. Thus, $\nu(\sigma)=\sigma$ if and only if $\rho$ commutes with $\sigma$. This will be crucial in the sequel. We distinguish three cases for $\sigma$, depending on whether its order is $1$, $2$ or at least $3$.

(i) Assume that  $\sigma$ is the identity. This implies that the map $\nu^i\mapsto \rho\cdot \nu(\rho)\cdots \nu^i(\rho)$ represents a cocycle of $\gnu\cong \Z{n}$ with values in $\PGL(2,\C(x))$. The exact sequence $1\rightarrow \C(x)^{*}\rightarrow \GL(2,\C(x))\rightarrow \PGL(2,\C(x))\rightarrow 1$ yields the cohomology exact sequence \[H^1(\gnu,\GL(2,\C(x)))\rightarrow H^1(\gnu,\PGL(2,\C(x)))\rightarrow H^2(\gnu,\C(x)^{*}).\] Since $\C(x)$ has the $\mathrm{C}_1$-property (by Tsen's theorem), $H^2(\gnu,\C(x)^{*})$ is trivial \cite[X.\S 7, Propositions 10 and 11]{bib:Ser}. Furthermore, $H^1(\gnu,\GL(2,\C(x)))=\{1\}$ \cite[X.\S 1, Proposition 3]{bib:Ser}, whence $H^1(\gnu,\PGL(2,\C(x))=\{1\}$. This yields the existence of $\mu \in \PGL(2,\C(x))$ such that 
$\rho=\mu^{-1} \cdot \nu(\mu)$. Consequently, the conjugation of $g$ by $(\mu,1)$ is $(1,\nu)$, which is an element of type $(1)$.

(ii) Assume that $\sigma$ is an element of order $m\geq 3$. Conjugating $g$ by some element of $\PGL(2,\C(x))$, we may suppose that $\sigma$ is  diagonal (Lemma~$\ref{Lem:PGL2General}$), and thus $\sigma=\left(\begin{array}{cc} 1 & 0 \\ 0& c\end{array}\right)$ for some element $c\in \C^{*}$ of finite order.
Since $\nu(\sigma)=\sigma$, $\rho$ commutes with $\sigma$, whence $\rho$ is diagonal, i.e. $\rho=\left(\begin{array}{cc} 1 & 0 \\ 0& r\end{array}\right)$, for some $r\in\C(x)^{*}$.  Choose $b\in \C^{*}$ such that $b^n=c$ and write $r'=r/b$. The equality  $c=r\cdot \nu(r)\cdots \nu^{n-1}(r)$ implies that  $r'\cdot \nu(r')\cdots \nu^{n-1}(r')=c/(b^n)=1$; since $H^1(\Z{n}, \C(x)^{*})=\{1\}$ (by Hilbert's Theorem 90, \cite[X.\S 1]{bib:Ser}), we see that $r'$ is equal to $a^{-1}\cdot \nu(a)$ for some $a\in \C(x)^{*}$, which means that $r=a^{-1}\cdot b\cdot \nu(a)$. The element $\left(\left(\begin{array}{cc} 1 & 0 \\ 0& a\end{array}\right),1\right)$  conjugates $g$ to $\left(\left(\begin{array}{cc} 1 & 0 \\ 0& b\end{array}\right), \left(\begin{array}{cc} 1 & 0 \\ 0& \xi\end{array}\right)\right)$, which is an element of type $(1)$. 

(iii) The remaining case is when $\sigma$ is an involution. After a conjugation by an element of $\PGL(2,\C(x))$, we may assume that $\sigma=\left(\begin{array}{cc} 0 & h \\ 1& 0\end{array}\right)$, for some element $h\in \C(x)^{*}$ (Lemma \ref{Lem:PGL2General}).
Choosing an element $R\in \GL(2,\C(x))$ whose class in $\PGL(2,\C(x))$ is $\rho$, we have $\det(R) \cdot \nu(\det R)\cdots \nu^{n-1}(\det R)=\mu^2 h$, for some $\mu \in \C(x)^{*}$, so $\mu^2h\in \C(x)^{\nu}$. Replacing $\sigma$ by $\left(\begin{array}{cc} 0 & \mu^2h \\ 1& 0\end{array}\right)$, after a conjugation by an element of $\PGL(2,\C(x))$ (Lemma \ref{Lem:PGL2General}), we may assume that $\nu(h)=h$. This condition is equivalent to the fact that $h\in \C(x^n)$. Since the conjugation of $\sigma$ by $\left(\begin{array}{cc} 1 & 0 \\ 0& q(x^n)\end{array}\right)$ changes $h$ into $h\cdot(q(x^n))^2$, for any $q\in\C(x)$, we may assume that $h=p(x^n)$ where $p$ is a polynomial with simple roots. Finally, if $n$ is even and $p(0)=0$, then $x^n$ divides $p(x^n)$. The conjugation of  $\sigma$ by $\left(\begin{array}{cc} 1 & 0 \\ 0& x^{-n/2}\end{array}\right)$ removes the factor $x^n$. We obtain in any case an element of type $(2)$.
\end{proof}
\begin{coro}\label{Cor:DescrRoots}
Up to conjugation in $\dJo$, any element $g$ which is not diagonalisable has order $2n$, for some positive integer $n$, and $\sigma=g^n$ is equal to $(x,y)\dasharrow (x,p(x^n)/y)$, for some polynomial $p$ with simple roots. The curve $\Gamma\subset \C^2$ fixed by $\sigma$ has equation $y^2=p(x^n)$, and is a double covering of $\C$ by means of the $x$-projection $(x,y)\mapsto x$. Moreover, $g$ restricts to an automorphism $g|_{\Gamma}$ of $\Gamma$  of order $n$, which is not a root of the involution associated to the $g_1^2$.\proofend
\end{coro}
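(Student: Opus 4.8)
The plan is to deduce everything directly from Proposition~\ref{Prop:JonqConj2cases}, whose cohomological normalisation already does the real work. First I would note that an element of type~$(1)$ in that proposition is diagonal, hence diagonalisable; so if $g$ is not diagonalisable it cannot be conjugate to such an element, and the dichotomy of the proposition forces $g$ to be conjugate to a type~$(2)$ element $\left(\rho,\left(\begin{array}{cc} 1 & 0 \\ 0& \xi\end{array}\right)\right)$, where $\xi$ is a primitive $n$-th root of unity and $g^n=\sigma=\left(\begin{array}{cc} 0 & p(x^n) \\ 1& 0\end{array}\right)$ with $p$ having simple roots. The order is then immediate: the projection $\dJo\to\PGL(2,\C)$ carries $g$ to $\nu=\left(\begin{array}{cc} 1 & 0 \\ 0& \xi\end{array}\right)$, which has order $n$, so $\mathrm{ord}(g)$ is a multiple of $n$; on the other hand $g^{2n}=\sigma^2=1$ while $g^n=\sigma\neq 1$ (an anti-diagonal matrix is never scalar). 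Hence $\mathrm{ord}(g)=2n$.

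Next I would translate $\sigma$ into affine coordinates using Notation~\ref{Not:ActionPGL}: the class of $\left(\begin{array}{cc} 0 & p(x^n) \\ 1& 0\end{array}\right)$ is the map $(x,y)\dasharrow(x,p(x^n)/y)$, whose fixed locus is exactly $\Gamma\colon y^2=p(x^n)$, a double covering of the $x$-line whose $g_1^2$ is the $x$-projection, with associated deck involution $\iota\colon(x,y)\mapsto(x,-y)$. Since $\sigma=g^n$ is a power of $g$, the two commute, so $g$ permutes $\Fix(\sigma)=\Gamma$ and restricts to an automorphism $g|_\Gamma$. On $\Gamma$ we have $(g|_\Gamma)^n=\sigma|_\Gamma=\mathrm{id}$, so its order divides $n$; conversely the $x$-projection is $g$-equivariant and sends $g|_\Gamma$ to the base automorphism $x\mapsto\xi^{-1}x$ of order $n$, so the order of $g|_\Gamma$ is a multiple of $n$ as well. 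Hence $g|_\Gamma$ has order exactly $n$.

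The only point requiring a genuine (if short) argument, and the one I expect to be the main obstacle, is that $g|_\Gamma$ is not a root of $\iota$. Here I would again exploit the base action: both $\sigma|_\Gamma=\mathrm{id}$ and $\iota$ act trivially on the $x$-line, since they preserve each fibre, whereas $g|_\Gamma$ projects to $x\mapsto\xi^{-1}x$ of order $n$. If $(g|_\Gamma)^k=\iota$ for some $k$, then projecting to the base would give $\xi^{-k}=1$, i.e. $n\mid k$, and consequently $(g|_\Gamma)^k=(\sigma|_\Gamma)^{k/n}=\mathrm{id}\neq\iota$, a contradiction. This shows $g|_\Gamma$ is not a root of the involution associated to the $g_1^2$, completing the plan.
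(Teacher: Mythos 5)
Your proposal is correct and follows the paper's intended route: the corollary is stated there with no written proof, as an immediate consequence of Proposition~\ref{Prop:JonqConj2cases}, and your write-up supplies exactly the routine verifications (the order count via the projection to $\PGL(2,\C)$, the translation of $\sigma$ into affine coordinates, and the base-action argument showing $g|_\Gamma$ cannot be a root of the deck involution). Nothing is missing.
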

Lemma \ref{Lem:PGL2General} and Proposition \ref{Prop:JonqConj2cases} show the importance of involutions in the de Jonqui\`eres group, and especially of those that belong to $\PGL(2,\C(x))$. According to Corollary~\ref{Cor:DescrRoots}, we can associate to any non-diagonalisable element $g$ of $\dJo$ an involution $\sigma\in\PGL(2,\C(x))$, a curve $\Gamma$, and an element $g|_{\Gamma}\in\Aut(\Gamma)$ which is not a root of the involution associated to the double covering $\Gamma\to \C$.

To prove Theorem~\ref{Thm:ExplicitdJo}, we will show that the  map $g\to(\Gamma,g|_{\Gamma})$ is a  $1$-to-$1$ correspondence, i.e. that for any pair $(\Gamma,g|_{\Gamma})$, there exists $g\in \dJo$, and that its conjugacy class in $\dJo$ is unique. This is done in the next subsection, and in particular in Propositions~\ref{Prop:ConjNsigma} and~\ref{Prp:ConjIFFsameAction}. 

\subsection{The map $g\to (\Gamma,g|_{\Gamma})$ is a $1$-to-$1$ correspondence}

The correspondence $\sigma\leftrightarrow \Gamma$ is well-known to be $1$-to-$1$. According to Proposition~\ref{Prop:JonqConj2cases}, $g$ may be viewed as $\left(\rho, \left(\begin{array}{cc} 1 & 0 \\ 0& \xi\end{array}\right)\right)\in\PGL(2,\C(x))\rtimes\PGL(2,\C)$, where $\rho$ commutes with $\sigma$. To prove Propositions~\ref{Prop:ConjNsigma} and~\ref{Prp:ConjIFFsameAction}, we will need the description of the centraliser $N_{\sigma}$ of $\sigma$ in $\PGL(2,\C(x))$ (Lemma~$\ref{Lem:Centraliser}$), and a technical lemma on exact sequences associated to $N_{\sigma}$ (Lemma~\ref{Lem:TechniqExactS}).
\begin{lemm}\label{Lem:Centraliser}
Let $\sigma \in \PGL(2,\C(x))$ be the involution $\left(\begin{array}{cc} 0& h \\ 1& 0\end{array}\right)$ for $h\in \C(x)^{*}$ and denote by $N_\sigma$ the centraliser of $\sigma$ in $\PGL(2,\C(x))$.

\begin{enumerate}
\item[\upshape 1.]
$N_{\sigma}$ is the disjoint union of 
$N_{\sigma}^{0}$ and $N_{\sigma}^{1}$ $($written $N_{\sigma}=N_{\sigma}^{0}\uplus N_{\sigma}^{1})$, where \[N_{\sigma}^{\delta}=\left\{\left(\begin{array}{cc} a& (-1)^\delta bh \\ b& (-1)^\delta a\end{array}\right)\in \PGL(2,\C(x)), a,b\in \C(x)^{*}\right\};\]
\item[\upshape 2.]
$N_{\sigma}=N_{\sigma}^0\rtimes \Z{2}$, where $\Z{2}$ is generated by $\left(\begin{array}{cc} 1& 0\\ 0& -1\end{array}\right)$ and acts on the group $N_{\sigma}^0$ by mapping an element on its inverse;
\item[\upshape 3.]
if $h=\mu^2$, $\mu\in\C(x)^{*}$, the conjugation by $\left(\begin{array}{lr} 1& \mu \\ 1& -\mu\end{array}\right)$ sends $\sigma$, $N_{\sigma}^0$ and $N_{\sigma}^1$ respectively on \[\left(\begin{array}{lr} 1& 0 \\ 0& -1\end{array}\right)\mbox{, }\left\{\left(\begin{array}{cc} 1& 0 \\ 0& a\end{array}\right), a\in \C(x)^{*}\right\}\mbox{ and }\left\{\left(\begin{array}{cc} 0& a \\ 1& 0\end{array}\right), a\in \C(x)^{*}\right\};\]
\item[\upshape 4.]
if $h$ is not a square in $\C(x)^{*}$, the map $\left(\begin{array}{lr} a& bh \\ b& a\end{array}\right)\rightarrow [a+b\sqrt{h}]$ yields an isomorphism from $N_{\sigma}^0$ to $\C(x)[\sqrt{h}]^{*}/\C(x)^{*}$;
\item[\upshape 5.]
each element of $N_{\sigma}$ acts on the curve $\Gamma\subset \C^2$ of equation $y^2=h(x)$. This action gives rise to a split exact sequence
\[1\rightarrow N_{\sigma}^0\rightarrow N_{\sigma}\rightarrow \Z{2}\rightarrow 1.\]
\end{enumerate}
\end{lemm}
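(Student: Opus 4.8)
The plan is to prove all five assertions by direct matrix computation in $\GL(2,\C(x))$, passing to $\PGL(2,\C(x))$ by working modulo the scalars $\C(x)^{*}$; the only real care needed is to keep track of the scalar ambiguity. For part 1, I would lift a candidate centralising element to $M=\left(\begin{smallmatrix}a&c\\b&d\end{smallmatrix}\right)\in\GL(2,\C(x))$ and impose $M\sigma=\lambda\,\sigma M$ for some $\lambda\in\C(x)^{*}$, which is exactly the condition for $M$ and $\sigma$ to commute in $\PGL$. Comparing the four entries of $M\sigma=\left(\begin{smallmatrix}c&ah\\d&bh\end{smallmatrix}\right)$ and $\sigma M=\left(\begin{smallmatrix}bh&dh\\a&c\end{smallmatrix}\right)$ gives $a=\lambda d$, $d=\lambda a$, $c=\lambda bh$, $bh=\lambda c$, and these relations force $\lambda^{2}=1$. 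The value $\lambda=1$ gives $d=a$, $c=bh$, i.e. the family $N_\sigma^0$, while $\lambda=-1$ gives $d=-a$, $c=-bh$, i.e. $N_\sigma^1$; disjointness in $\PGL$ is immediate, since equating a member of each family and comparing entries forces $a=b=0$, which is excluded.

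For part 2, I would check by hand that $N_\sigma^0$ is closed under product and inverse: the product of $\left(\begin{smallmatrix}a&bh\\b&a\end{smallmatrix}\right)$ and $\left(\begin{smallmatrix}a'&b'h\\b'&a'\end{smallmatrix}\right)$ is again of the same shape, with entries $aa'+bb'h$ on the diagonal and $ab'+a'b$ off it, so $N_\sigma^0$ is a subgroup; as $N_\sigma^1$ is its complementary coset, $[N_\sigma:N_\sigma^0]=2$. Setting $t=\left(\begin{smallmatrix}1&0\\0&-1\end{smallmatrix}\right)\in N_\sigma^1$, one has $t^{2}=\mathrm{id}$ and the one-line computation $t\left(\begin{smallmatrix}a&bh\\b&a\end{smallmatrix}\right)t^{-1}=\left(\begin{smallmatrix}a&-bh\\-b&a\end{smallmatrix}\right)$, whose right-hand side is exactly the inverse of $\left(\begin{smallmatrix}a&bh\\b&a\end{smallmatrix}\right)$ in $N_\sigma^0$; this yields $N_\sigma=N_\sigma^0\rtimes\langle t\rangle$ with $\langle t\rangle\cong\Z{2}$ acting by inversion. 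Part 4 then follows from the same product formula: the assignment $\left(\begin{smallmatrix}a&bh\\b&a\end{smallmatrix}\right)\mapsto a+b\sqrt{h}$ turns matrix multiplication into multiplication in $\C(x)[\sqrt{h}]$, is compatible with rescaling by $\C(x)^{*}$, and, when $h$ is not a square (so that $1,\sqrt{h}$ are $\C(x)$-independent and the determinant $a^{2}-b^{2}h$ is the field norm, hence nonzero), descends to an isomorphism $N_\sigma^0\to\C(x)[\sqrt{h}]^{*}/\C(x)^{*}$.

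For part 3, I would simply carry out the three conjugations by $P=\left(\begin{smallmatrix}1&\mu\\1&-\mu\end{smallmatrix}\right)$ in the case $h=\mu^{2}$. A short computation gives $P\sigma P^{-1}=\left(\begin{smallmatrix}1&0\\0&-1\end{smallmatrix}\right)$, then $P\left(\begin{smallmatrix}a&b\mu^{2}\\b&a\end{smallmatrix}\right)P^{-1}$ is the diagonal element $\left(\begin{smallmatrix}1&0\\0&(a-b\mu)/(a+b\mu)\end{smallmatrix}\right)$, and $PtP^{-1}=\left(\begin{smallmatrix}0&1\\1&0\end{smallmatrix}\right)$, from which the stated normal forms of $\sigma$, $N_\sigma^0$ and $N_\sigma^1=tN_\sigma^0$ follow; here I would note that the Möbius substitution $a/b\mapsto(a-b\mu)/(a+b\mu)$ sweeps out all of $\C(x)^{*}$ once the determinant-zero values are discarded. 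Finally, for part 5 I would compute the restriction to $\Gamma\colon y^{2}=h$ of the birational action sending $y$ to $(\alpha y+\beta)/(\gamma y+\delta)$: on $\Gamma$ one substitutes $h=y^{2}$, so a member $\left(\begin{smallmatrix}a&bh\\b&a\end{smallmatrix}\right)$ of $N_\sigma^0$ sends $y$ to $(ay+by^{2})/(by+a)=y$, while a member of $N_\sigma^1$ sends $y$ to $-y$. Thus restriction to $\Gamma$ is a homomorphism $N_\sigma\to\Z{2}$ with kernel exactly $N_\sigma^0$, split by $t$ (which acts as $y\mapsto-y$); this is precisely the sequence of part 2 read off geometrically.

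The computations are all elementary, so there is no conceptual obstacle; the only points requiring attention are bookkeeping ones. In part 1 I must keep the scalar $\lambda$ explicit throughout in order to be certain that the centraliser has exactly the two components $N_\sigma^0$ and $N_\sigma^1$ and not more. In parts 3 and 4 I must verify the surjectivity of the substitution $(a,b)\mapsto(a-b\mu)/(a+b\mu)$ onto $\C(x)^{*}$ and the injectivity and surjectivity of the field map onto $\C(x)[\sqrt{h}]^{*}/\C(x)^{*}$, both of which reduce to the $\C(x)$-linear independence of $1$ and $\sqrt{h}$ and to the non-vanishing of the norm on nonzero elements.
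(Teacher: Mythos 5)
Your proof is correct and follows essentially the same route as the paper, which dismisses Assertions 1--4 as ``a straightforward calculation'' and only writes out the substitution $h=y^2$ for Assertion 5 exactly as you do; your contribution is simply to make the calculations for 1--4 explicit, including the key point that the scalar $\lambda$ in $M\sigma=\lambda\sigma M$ must satisfy $\lambda^2=1$. The only cosmetic remark is that the sets $N_\sigma^\delta$ must be read as allowing $a=0$ or $b=0$ (not both), as your own examples $t$ and $\sigma$ show.
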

\begin{proof}
Assertions $1$ through $4$ follow from a straightforward calculation. Let us prove the last one. Let $p=(x,y)\in \C^2$ be a general point of $\Gamma$. Then, $\left(\begin{array}{lr} a& bh \\ b& a\end{array}\right)$ sends $p$ onto $\left(x,\frac{a(x)y+b(x)h(x)}{b(x)y+a(x)}\right)=\left(x,\frac{y(a(x)y+b(x)h(x))}{b(x)y^2+a(x)y}\right)$, which is equal to $p$ since $y^2=h(x)$. Furthermore, $\left(\begin{array}{lr} 1& 0 \\ 0& -1\end{array}\right)$ sends $p$ onto $(x,-y)\in \Gamma\backslash p$.
\end{proof}
Given some integer $n$, some primitive $n$-th root of unity $\xi$, and some $\sigma:(x,y)\dasharrow (x,y/p(x^n))$ as before, we look for elements $g=\left(\rho, \nu\right)\in\PGL(2,\C(x))\rtimes\PGL(2,\C)$ such that $\rho\in N_{\sigma}$, $\nu=\left(\begin{array}{cc} 1 & 0 \\ 0& \xi\end{array}\right)$ and  $g^n=\sigma$.  This means that $\sigma$ is equal to $\rho\cdot \nu(\rho)\cdots\nu^{n-1}(\rho)$, which corresponds to some kind of norm of $\rho$, according to the action of $\nu$ on $\PGL(2,\C(x))$. To prove the existence of $\rho$, we have to prove that this norm is surjective. If $(\rho,\nu)^n=(\rho',\nu)^n=\sigma$ and $\rho,\rho'$ have the same action on $\Gamma$ (which corresponds to saying that $\rho,\rho'\in N_{\sigma}^i$ for the same $i$, by Lemma~\ref{Lem:Centraliser}), we want to prove that $(\rho,\nu)$ and $(\rho',\nu)$ are conjugate in $\dJo$. We will use the fact that $\rho\cdot (\rho')^{-1}$ has norm $1$. The two results needed correspond in fact to exactness of exact sequences associated to norms, which is proved in Lemma~\ref{Lem:TechniqExactS} below.

 In this lemma, $G^{\nu}$ denotes the set of elements of $G$ fixed by the action of $\nu$.

\begin{lemm}\label{Lem:TechniqExactS}
Let $\xi\in\C^{*}$ be an $n$-th root of unity, let $p\in\C[x]$ be a polynomial with simple roots, let $\sigma=\left(\begin{array}{cc} 0 & p(x^n) \\ 1& 0\end{array}\right)\in\PGL(2,\C(x))$ and let $\nu=\left(\begin{array}{cc} 1 & 0 \\ 0& \xi\end{array}\right)\in\PGL(2,\C)$. Denote by $N_{\sigma}=N_{\sigma}^0\uplus N_{\sigma}^{1}$ the centraliser of $\sigma$ in $\PGL(2,\C(x))$, as in Lemma~$\ref{Lem:Centraliser}$. 

Then, the action of $\nu$ on $\PGL(2,\C(x))$ leaves $N_{\sigma}^0$ invariant and, writing $G=N_{\sigma}^0$,  the following sequence of group homomorphisms is exact:
\begin{eqnarray}
&\xymatrix{G\ar@{->}^{\mbox{\large $d_1$}}_{\alpha \mapsto \alpha\cdot \nu(\alpha)^{-1}}[rrr]&&&G\ar@{->}^{\mbox{\large $\varphi_1$}}_{\alpha \mapsto \alpha\cdot \nu(\alpha)\cdots \nu^{n-1}(\alpha)}[rrrr]&&&&(G)^{\nu}\ar@{->}[r]&1.}\label{eqHstd}
\end{eqnarray}
Furthermore, if $n$ is even each of the following sequences is also exact:
\begin{eqnarray}
&\xymatrix{G\ar@{->}^{\mbox{\large $d_2$}}_{\alpha \mapsto \alpha\cdot \nu^2(\alpha)^{-1}}[rr]&&G\ar@{->}^{\mbox{\large $\varphi_2$}}_{\alpha \mapsto \alpha\cdot \nu^2(\alpha)\cdots \nu^{n-4}(\alpha)\cdot\nu^{n-2}(\alpha)}[rrrr]&&&&(G)^{\nu^2}\ar@{->}[r]&1;}\label{eqH2}\\
&\xymatrix{(G)^{\nu^2}\ar@{->}^{\mbox{\large $d_3$}}_{\alpha \mapsto \alpha\cdot \nu(\alpha)^{-1}}[rr]&&(G)^{\nu^2}\ar@{->}^{\mbox{\large $\varphi_3$}}_{\alpha \mapsto \alpha\cdot \nu(\alpha)\cdots \nu^{n-1}(\alpha)}[rrrr]&&&&(G)^{\nu}\ar@{->}[r]&1;}\label{eqHstd2}\\
&\xymatrix{G\ar@{->}^{\mbox{\large $r$}}_{\alpha \mapsto \alpha\cdot \nu(\alpha)}[rr]&&G\ar@{->}^{\mbox{\large $\psi$}}_{\alpha \mapsto \alpha\cdot \frac{1}{\nu(\alpha)}\cdot \nu^2(\alpha)\cdots \frac{1}{\nu^{(n-1)}(\alpha)}}[rrrr]&&&&(G)^{\nu^2}\ar@{->}^{\mbox{\large $\varphi_3$}}[r]&G^{\nu}\ar@{->}[r]&1.\label{eqHinv}}
\end{eqnarray}
\end{lemm}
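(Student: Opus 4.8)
The plan is to recognise $G=N_\sigma^0$ as a Galois module and to read off all four sequences as the norm (Tate cohomology) exact sequences of suitable cyclic groups. By Lemma~\ref{Lem:Centraliser}(4), putting $K=\C(x)$ and $L=K[\sqrt h]=\C(\Gamma)$ with $h=p(x^n)$, the map $\left(\begin{smallmatrix}a&bh\\b&a\end{smallmatrix}\right)\mapsto[a+b\sqrt h]$ identifies $G$ with $L^*/K^*$. Since $\nu$ fixes $h$, it acts on $L$ as the field automorphism $x\mapsto\xi x,\ \sqrt h\mapsto\sqrt h$ of order $n$, and because it preserves the shape $\left(\begin{smallmatrix}a&bh\\b&a\end{smallmatrix}\right)$ it leaves $N_\sigma^0$ invariant; this is the first assertion. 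Under the identification, $d_1,d_2,d_3,r$ are the augmentation maps $\alpha\mapsto\alpha\cdot\phi(\alpha)^{-1}$ and $\varphi_1,\varphi_2,\varphi_3,\psi$ are norm maps, for the cyclic groups $\langle\nu\rangle$, $\langle\nu^2\rangle$, the quotient $\langle\nu\rangle/\langle\nu^2\rangle$, and $\langle\nu'\rangle$, where $\nu'$ is the automorphism $\alpha\mapsto\nu(\alpha)^{-1}$. The key observation is that $\nu'$ is again a genuine field automorphism of $L$: it is the composite of $\nu$ with the involution $\tau$ of Lemma~\ref{Lem:Centraliser}(2), which by Lemma~\ref{Lem:Centraliser}(5) is the hyperelliptic involution $\sqrt h\mapsto-\sqrt h$ and acts on $G$ by inversion. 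So $\nu'$ has order $n$, and the twist by $\tau$ is exactly what turns the ordinary norm into the alternating product defining $\psi$.

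The heart of the matter is the claim that $G=L^*/K^*$ is cohomologically trivial for each of $\langle\nu\rangle,\langle\nu^2\rangle,\langle\nu'\rangle$. To prove this I would use the exact sequence of modules
\[1\rightarrow K^*\rightarrow L^*\rightarrow G\rightarrow 1,\]
which is equivariant for each of these groups since each preserves $K^*$ and $L^*$. For a finite cyclic group $\langle\phi\rangle$ of field automorphisms of $L$, Hilbert's Theorem~90 gives $\hat H^1(\langle\phi\rangle,L^*)=\hat H^1(\langle\phi\rangle,K^*)=0$, while in degree $0$ periodicity gives $\hat H^0\cong\hat H^2$, a relative Brauer group injecting into the Brauer group of the fixed field. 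For $\phi\in\{\nu,\nu^2,\nu'\}$ each fixed field is a function field of a curve over $\C$ — for instance $K^{\nu'}=\C(x^n)$ and $L^{\nu'}=\C(x^n)[\sqrt{x^n p(x^n)}]$ — hence has the $\mathrm{C}_1$-property by Tsen's theorem and trivial Brauer group (as in \cite{bib:Ser}), so these terms vanish too. Thus $K^*$ and $L^*$ are cohomologically trivial for $\langle\phi\rangle$, and the long exact cohomology sequence (with periodicity for cyclic groups) forces the same for $G$.

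Granting the claim, the sequences are formal. For a cohomologically trivial module the complex $\cdots\to G\xrightarrow{\phi-1}G\xrightarrow{N}G\xrightarrow{\phi-1}G\to\cdots$ is exact, so reading it for $\langle\nu\rangle$ yields~(\ref{eqHstd}) (surjectivity of $\varphi_1$ onto $G^\nu$ being the vanishing of $\hat H^0$, exactness in the middle that of $\hat H^{-1}$), reading it for $\langle\nu^2\rangle$ yields~(\ref{eqH2}), and reading it for $\langle\nu'\rangle$ yields the $r$-$\psi$ part of~(\ref{eqHinv}), with $\psi$ landing in $G^{\nu'}=\ker\varphi_3$. For~(\ref{eqHstd2}) I would pass to $\Z2=\langle\nu\rangle/\langle\nu^2\rangle$ acting on $G^{\nu^2}$: since $G$ is cohomologically trivial for the normal subgroup $\langle\nu^2\rangle$, the Hochschild–Serre spectral sequence collapses to give $H^p(\Z2,G^{\nu^2})\cong H^p(\langle\nu\rangle,G)=0$ for $p\geq1$, so $G^{\nu^2}$ is cohomologically trivial for $\Z2$; its augmentation $d_3$ and norm $\varphi_3$ then give~(\ref{eqHstd2}), and splicing this onto $\psi$ along $G^{\nu'}=\ker\varphi_3$ completes~(\ref{eqHinv}).

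The step I expect to be the main obstacle is the cohomological triviality claim, and within it the correct treatment of $\nu'$: one must verify that composing the geometric automorphism $\nu$ with the hyperelliptic involution really is an order-$n$ field automorphism whose fixed field is once more a curve's function field, so that Tsen applies, and that its norm is the alternating product $\psi$. A second point requiring care is that on $G^{\nu^2}$ the norm relevant for exactness of~(\ref{eqHstd2}) and~(\ref{eqHinv}) is the norm of the quotient $\Z2$, namely $\alpha\mapsto\alpha\nu(\alpha)$ (the full product $\alpha\nu(\alpha)\cdots\nu^{n-1}(\alpha)$ collapses to a power of it on $G^{\nu^2}$); keeping track of this is what makes the splicing consistent.
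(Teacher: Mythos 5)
For the generic case (where $h=p(x^n)$ is not a square in $\C(x)^{*}$) your route is essentially the paper's: identify $G$ with $(\C(x)[\sqrt{h}])^{*}/\C(x)^{*}$, kill the cohomology of $\C(x)^{*}$ and of $(\C(x)[\sqrt{h}])^{*}$ by Hilbert~90 and Tsen, deduce the vanishing for the quotient from the exact sequence $1\to\C(x)^{*}\to(\C(x)[\sqrt{h}])^{*}\to G\to 1$, and obtain $(\ref{eqHinv})$ by twisting $\nu$ with the involution $\sqrt{h}\mapsto-\sqrt{h}$ so that the alternating product becomes an honest norm. Your computation of the fixed fields of the twist (e.g.\ $\C(x^n)[\sqrt{x^np(x^n)}]$), which you single out as the delicate point, is correct and is exactly what makes Tsen applicable; this is the same mechanism as in the paper's proof.

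The genuine gap is that you have silently assumed throughout that $h=p(x^n)$ is not a square in $\C(x)^{*}$. Lemma~\ref{Lem:Centraliser}(4), which you invoke to identify $G$ with $L^{*}/K^{*}$, applies only in that case; when $h$ is a square (allowed by the hypotheses, e.g.\ $p$ a nonzero constant, i.e.\ the case of a reducible bisection, which does occur in the applications), Lemma~\ref{Lem:Centraliser}(3) gives $G\cong\C(x)^{*}$ instead. For $(\ref{eqHstd})$, $(\ref{eqH2})$ and $(\ref{eqHstd2})$ this case is harmless, since Hilbert~90 and Tsen apply directly to $\C(x)^{*}$. But for $(\ref{eqHinv})$ your key observation collapses there: inversion on $\C(x)^{*}$ is not induced by any field automorphism --- it is only on the quotient $L^{*}/K^{*}$ that $\alpha\mapsto\nu(\alpha)^{-1}$ is realised by a genuine automorphism, because $(a+b\sqrt{h})(a-b\sqrt{h})\in\C(x)$. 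So for square $h$ the twisted module is not a Galois module and neither Hilbert~90 nor Tsen says anything about it. The paper closes this case by hand: using $\psi=\varphi_2\circ d_1$ and the exactness of $(\ref{eqH2})$ it reduces to $\beta\in G^{\nu}=\C(x^n)^{*}$, factors $\beta=b_0\prod(b_i-x^n)$, and exhibits $\beta$ as $r\bigl(a_0\prod(a_i-x^{n/2})\bigr)$ with $(a_i)^2=b_i$. You need this (or an equivalent explicit argument) to complete $(\ref{eqHinv})$.

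A smaller point you should make precise rather than defer: on $G^{\nu^2}$ the written product $\alpha\cdot\nu(\alpha)\cdots\nu^{n-1}(\alpha)$ equals the $(n/2)$-th power of the $\Z{2}$-norm $\alpha\mapsto\alpha\cdot\nu(\alpha)$, and since $G$ contains torsion this power genuinely enlarges the kernel and shrinks the image. The exactness you actually establish (via Hochschild--Serre or via Hilbert~90 for the degree-$2$ subextension) is that of the $\Z{2}$-norm, and it is this reading of $\varphi_3$ that makes $(\ref{eqHstd2})$ and the splice in $(\ref{eqHinv})$ work; stating that explicitly is part of the proof, not bookkeeping.
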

\begin{proof}
Using Lemma~$\ref{Lem:Centraliser}$, we make the following observations. If $p(x^n)$ is a square in $\C(x)^{*}$, then $G=N_{\sigma}^0\cong \C(x)^{*}$ and the action of $\nu$ on $G$ is the restriction of the field homomorphism $\C(x)\rightarrow \C(x)$ that sends $f(x)$ on $f(\xi x)$ (see Notation~\ref{Not:ActionPGL}). Otherwise, writing $h=p(x^n)$ we have $G\cong \C(x)[\sqrt{h}]^{*}/\C(x)^{*}$, where the action of $\nu$ is induced by the field isomorphism of $\C(x)[\sqrt{h}]$ that sends $a(x)+b(x)\sqrt{h}$ on $a(\xi x)+b(\xi x)\sqrt{h}$.

Since $\C(x)$ has the $\mathrm{C}_1$-property (by Tsen's theorem), for any finite Galois extension $L/K$, where $K$ is finite over $\C(x)$, the norm of the extension is surjective (see \cite{bib:Ser}, X.\S 7, Propositions 10 and 11). This implies the surjectivity of the homomorphisms $\varphi_1,\varphi_2$ and $\varphi_3$. 

The exactness of sequences \ref{eqHstd}, \ref{eqH2}, \ref{eqHstd2} is respectively equivalent to the triviality of  $H^1(\gnu,G)$, $H^1(<\nu^2>,G)$ and $H^1(\gnu, (G)^{\nu^2})$. If $G\cong \C(x)^{*}$, this follows directly from Hilbert's Theorem $90$. If $G\cong (\C(x)[\sqrt{h}])^{*}/\C(x)^{*}$, the  exact sequence 
\begin{equation}1\rightarrow\C(x)^{*}\rightarrow (\C(x)[\sqrt{h}])^{*}\rightarrow (\C(x)[\sqrt{h}])^{*}/\C(x)^{*}\rightarrow 1\label{ExactSequenceCxsqrth}\end{equation}
yields the cohomology exact sequence
\begin{equation}H^1(\gnu,(\C(x)[\sqrt{h}])^{*})\rightarrow H^1(\gnu,(\C(x)[\sqrt{h}])^{*}/\C(x)^{*})\rightarrow H^2(\gnu,\C(x)^{*}).\label{CohExactSequenceCxsqrth}\end{equation}

Once again, $H^1(\gnu,(\C(x)[\sqrt{h}])^{*})$ is trivial by Hilbert's Theorem 90, and so is $H^2(\gnu,\C(x)^{*})$ by Tsen's theorem  and \cite[X.\S 7, Propositions 10 and 11]{bib:Ser}; we obtain therefore the triviality of $H^1(\gnu,(\C(x)[\sqrt{h}])^{*}/\C(x)^{*})$ and the exactness  of $(\ref{eqHstd})$. The cases $(\ref{eqH2})$ and $(\ref{eqHstd2})$ are similar.

Let us now prove the exactness of $(\ref{eqHinv})$.  The equality $\psi=d_3\circ \varphi_2$ and the exactness of $(\ref{eqH2})$ and $(\ref{eqHstd2})$ imply that the image of $\psi$ is the kernel of $\varphi_3$ and that $\varphi_3$ is surjective. To prove the exactness of $(\ref{eqHinv})$, we separate two possibilities. 

Firstly, assume that $h=p(x^n)$ is not a square, which means that $G\cong \C(x)[\sqrt{h}]^{*}/\C(x)^{*}$ and that $\nu$ acts as the field automorphism of $\C(x)[\sqrt{h}]$ that sends $a(x)+b(x)\sqrt{h}$ on $a(\xi x)+b(\xi x)\sqrt{h}$. Denote by $\overline{\nu}:\C(x)[\sqrt{h}]\rightarrow \C(x)[\sqrt{h}]$ the field automorphism that sends $a(x)+b(x)\sqrt{h}$ on $a(\xi x)-b(\xi x)\sqrt{h}$. Since the order of $\nu$ is even, the map $\nu^i\rightarrow \overline{\nu}^i$ induces another action of $\gnu\cong\Z{n}$ on the field $\C(x)[\sqrt{h}]$. Furthermore, the actions of $\nu$ on $\C(x)^{*}$ and on $(\C(x)[\sqrt{h}])^{*}/\C(x)^{*}$ given respectively by $f(x)\mapsto f(\xi x)$ and $[a(x)+b(x)\sqrt{h}]\mapsto [a(\xi x)-b(\xi x)\sqrt{h}]$ are compatible with the exact sequence (\ref{ExactSequenceCxsqrth}), and thus yield the exact sequence (\ref{CohExactSequenceCxsqrth}) with this new action. Once again, the cohomological sets  $H^1(\gnu,(\C(x)[\sqrt{h}])^{*})$ and $H^2(\gnu,\C(x)^{*})$ associated to these actions are trivial, which implies that $H^1(\gnu,(\C(x)[\sqrt{h}])^{*}/\C(x)^{*})$ is trivial. Observe now that $(a+b\sqrt{h})(a-b\sqrt{h})\in \C(x)$ for any $a,b \in \C(x)$, which implies that the new action of $\nu$ on $(\C(x)[\sqrt{h}])^{*}/\C(x)^{*}$ is $\alpha\mapsto \nu(\alpha)^{-1}$. The triviality of the cohomology group is therefore equivalent to the left exactness of (\ref{eqHinv}).

Secondly, assume that $p(x^n)$ is a square. Since $\psi\circ r$ is the trivial map, there remains to show that if $\beta\in G$ and $\psi(\beta)=1$, then $\beta$ is in the image of $r$. By the equality $\psi=\varphi_2\circ d_1$, and the exactness of (\ref{eqH2}), there exists $\alpha\in G$ such that $d_1(\beta)=d_2(\alpha)$. Since $d_1(r(\alpha))=d_2(\alpha)$, we may replace $\beta$ by  $\beta/ r(\alpha)$ and assume that $\beta$ is in the kernel of $d_1$, i.e. that $\beta\in (G)^{\nu}$. The hypothesis on $p(x^n)$ implies that $G\cong \C(x)^{*}$ and that $G^{\nu}=\C(x^n)^{*}$. Therefore, $\beta=b_0\cdot \prod_{i=1}^r (b_i-x^n)$ where $b_i\in \C$ for $i=0,\dots,r$ and $b_0\not=0$. Then, $\beta$ is equal to $r(a_0\cdot \prod_{i=1}^r (a_i-x^m))$, where $(a_i)^2=b_i$ and $2m=n$ (recall that $n$ is assumed to be even).
\end{proof}
\begin{prop}\label{Prop:ConjNsigma}
Let $\xi\in\C^{*}$ be an $n$-th root of unity, let $p\in\C[x]$ be a polynomial with simple roots, let $\sigma=\left(\begin{array}{cc} 0 & p(x^n) \\ 1& 0\end{array}\right)\in\PGL(2,\C(x))$ and let $\nu=\left(\begin{array}{cc} 1 & 0 \\ 0& \xi\end{array}\right)\in\PGL(2,\C)$. Denote by $N_{\sigma}=N_{\sigma}^0\uplus N_{\sigma}^{1}$ the centraliser of $\sigma$ in $\PGL(2,\C(x))$, as in Lemma~$\ref{Lem:Centraliser}$. Then, the following hold:

\begin{enumerate}
\item[\upshape 1.]
There exists an element $\rho_0 \in N_{\sigma}^0$ such that $(\rho_0, \nu)^n=(\sigma,1)$.

Moreover, if two such elements $\rho_0$ and $\rho_0'$ exist, then $(\rho_0,\nu)$ and $(\rho_0',\nu)$ are conjugate in $\dJo=\PGL(2,\C(x))\rtimes \PGL(2,\C)$.
\item[\upshape 2.]
There exists an element $\rho_1\in N_{\sigma}^1$ such that $(\rho_1,\nu)^n=(\sigma,1)$ if and only if $n$ is even.

Furthermore, if two such elements $\rho_1$ and $\rho_1'$ exist, then $(\rho_1,\nu)$ and $(\rho_1',\nu)$ are conjugate in $\dJo$.
\end{enumerate}
\end{prop}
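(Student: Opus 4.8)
The plan is to translate the equation $(\rho,\nu)^n=(\sigma,1)$ into a norm equation in the abelian group $N_\sigma^0$ and then read off both existence and uniqueness-up-to-conjugacy directly from the exact sequences of Lemma~\ref{Lem:TechniqExactS}. The starting point is the semidirect-product identity $(\rho,\nu)^n=\bigl(\rho\cdot\nu(\rho)\cdots\nu^{n-1}(\rho),\ \nu^n\bigr)=(\varphi(\rho),1)$, so that the condition is exactly $\varphi(\rho)=\sigma$, where $\varphi$ denotes the relevant ``norm''. I will repeatedly use that $N_\sigma^0$ is abelian (part~4 of Lemma~\ref{Lem:Centraliser}), which makes each norm-type map a group homomorphism, and that $\sigma$ is fixed by $\nu$: indeed $\nu(p(x^n))=p((\xi x)^n)=p(x^n)$, so $\sigma\in(N_\sigma^0)^{\nu}$, while $N_\sigma^0$ itself is $\nu$-invariant. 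The only tool for producing conjugations will be the rule $(\mu,1)(\rho,\nu)(\mu,1)^{-1}=(\mu\,\rho\,\nu(\mu)^{-1},\nu)$, with $\mu\in N_\sigma^0$.

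For Part~1, existence of $\rho_0\in N_\sigma^0$ with $\varphi(\rho_0)=\sigma$ is precisely the surjectivity of $\varphi_1\colon N_\sigma^0\to(N_\sigma^0)^{\nu}$ in sequence~(\ref{eqHstd}), applied to $\sigma\in(N_\sigma^0)^{\nu}$. For uniqueness, if $\varphi_1(\rho_0)=\varphi_1(\rho_0')=\sigma$ then $\varphi_1(\rho_0^{-1}\rho_0')=1$, so by exactness of~(\ref{eqHstd}) we have $\rho_0^{-1}\rho_0'=d_1(\mu)=\mu\,\nu(\mu)^{-1}$ for some $\mu\in N_\sigma^0$. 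A one-line computation in the abelian group, $\mu\,\rho_0\,\nu(\mu)^{-1}=\rho_0\,\mu\,\nu(\mu)^{-1}=\rho_0\,(\rho_0^{-1}\rho_0')=\rho_0'$, shows that conjugation by $(\mu,1)$ sends $(\rho_0,\nu)$ to $(\rho_0',\nu)$.

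For Part~2, I first note that $\nu$ preserves the coset $N_\sigma^1$ (since $\nu(h)=h$, or because $\nu$ fixes $N_\sigma^0$ inside the $\nu$-stable group $N_\sigma$). The ``only if'' is a parity count: each $\nu^i(\rho_1)\in N_\sigma^1$, so $\varphi(\rho_1)$ lies in $N_\sigma^{\,n\bmod 2}$, which can meet $N_\sigma^0\ni\sigma$ only when $n$ is even. For the converse and the uniqueness (now $n$ even) I fix $t=\left(\begin{array}{cc}1&0\\0&-1\end{array}\right)\in N_\sigma^1$, which is $\nu$-fixed and inverts $N_\sigma^0$ under conjugation (part~2 of Lemma~\ref{Lem:Centraliser}), and write $\rho_1=\alpha t$ with $\alpha\in N_\sigma^0$. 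Pushing the $t$'s to the right via $t\beta t^{-1}=\beta^{-1}$ (and $t^n=1$) yields $\varphi(\rho_1)=\alpha\,\nu(\alpha)^{-1}\nu^2(\alpha)\cdots\nu^{n-1}(\alpha)^{-1}=\psi(\alpha)$, the map of sequence~(\ref{eqHinv}). Thus existence of $\rho_1$ with $\varphi(\rho_1)=\sigma$ amounts to solving $\psi(\alpha)=\sigma$, which by exactness is possible exactly because $\sigma\in\ker\varphi_3$; and this holds since $\varphi_3(\sigma)=\sigma^n=1$, $\sigma$ being an involution and $n$ even. For uniqueness, writing $\rho_1=\alpha t$, $\rho_1'=\alpha' t$ with $\psi(\alpha)=\psi(\alpha')=\sigma$, the same conjugation rule gives $\mu\,\rho_1\,\nu(\mu)^{-1}=\alpha\,r(\mu)\,t$ with $r(\mu)=\mu\,\nu(\mu)$, so the problem reduces to solving $r(\mu)=\alpha^{-1}\alpha'$. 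Since $\psi(\alpha^{-1}\alpha')=1$, exactness of~(\ref{eqHinv}) at the first $N_\sigma^0$, namely $\ker\psi=\mathrm{im}(r)$, supplies such a $\mu$, and $(\mu,1)$ conjugates $(\rho_1,\nu)$ to $(\rho_1',\nu)$.

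The crux, and the only step requiring genuine care, is the bookkeeping in Part~2: identifying the norm of an element of the \emph{non-group} coset $N_\sigma^1$ with the twisted homomorphism $\psi$, and recognising that solvability and uniqueness are governed respectively by $\ker\varphi_3$ and by $\ker\psi=\mathrm{im}(r)$ in sequence~(\ref{eqHinv}). Once the abelianness of $N_\sigma^0$ and the four exact sequences are available, everything else is formal.
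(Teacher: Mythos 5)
Your proposal is correct and follows essentially the same route as the paper: the same translation of $(\rho,\nu)^n=(\sigma,1)$ into the norm equations of Lemma~\ref{Lem:TechniqExactS}, the same decomposition $\rho_1=\alpha\,\delta$ with $\delta=\mathrm{diag}(1,-1)$ to identify the $N_\sigma^1$-norm with the twisted map $\psi$ of sequence~(\ref{eqHinv}), and the same conjugations by elements $(\mu,1)$ with $\mu\in N_\sigma^0$. If anything, you are slightly more explicit than the paper in checking that $\sigma\in\ker\varphi_3$ (via $\varphi_3(\sigma)=\sigma^n=1$), a step the paper leaves implicit.
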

\begin{proof}
Note that the action of $\nu$ on $\PGL(2,\C(x))$ leaves $N_{\sigma}^{i}$ invariant for $i=0,1$; we denote as before by $(N_{\sigma}^{i})^{\nu}$ the fixed part of each set, and have $\sigma\in (N_{\sigma}^0)^{\nu}$. 

For any element $\rho \in N_{\sigma}^0$, we have $(\rho,\nu)^n=(\rho\cdot \nu(\rho)\cdots \nu^{n-1}(\rho),1)$. The exactness of the sequence  (\ref{eqHstd}) implies the existence of $\rho_0\in N_{\sigma}^0$ such that $(\rho_0,\nu)^n=(\sigma,1)$. Assume now that $\rho_0,\rho_0'\in N_{\sigma}^0$ are such that $(\rho_0,\nu)^n=(\rho_0',\nu)^n=(\sigma,1)$. Writing $\mu=(\rho_0)^{-1}\rho_0'$, we have $\mu\cdot \nu(\mu)\cdots\nu^{n-1}(\mu)=1$, and the exactness of (\ref{eqHstd}) yields the existence of $\lambda \in N_{\sigma}^0$ such that $\mu=\lambda\nu(\lambda)^{-1}$. Then, the elements $(\rho_0,\nu)$  and  $(\rho_0',\nu)$  are conjugate by $(\lambda,1)$. This yields Assertion $(1)$.

If $n$ is odd and $\rho_1\in N_{\sigma}^1$, then $(\rho_1,\sigma)^n$ belongs to $N_{\sigma}^1$ and thus is not equal to~$\sigma$. Assume now that $n$ is even and write $\delta=\left(\begin{array}{cc} 1 & 0 \\ 0& -1\end{array}\right)\in \PGL(2,\C(x))$.
For any $\rho \in N_{\sigma}^0$, we have $\rho\delta \in N_{\sigma}^1$ and  $(\rho\delta,\nu)^n=(\rho\cdot \nu(\rho)^{-1}\cdot \nu^2(\rho)\cdots \nu^{n-1}(\rho)^{-1},1)$. The exactness of the sequence  (\ref{eqHinv}) implies the existence of $\rho_1\in N_{\sigma}^1$ such that $(\rho_1,\nu)^n=(\sigma,1)$. Assume now that $\rho_1,\rho_1'\in N_{\sigma}^1$ are such that $(\rho_1,\nu)^n=(\rho_1',\nu)^n=(\sigma,1)$. Write $\rho_1=\rho_0\delta$ and $\rho_1'=\rho_0'\delta$, for some $\rho_0,\rho_0'\in N_{\sigma}^0$ and set $\mu=(\rho_0)^{-1}(\rho_0)'\in N_{\sigma}^0$. We have $\mu\cdot \nu(\mu)^{-1}\cdot \nu^2(\mu)\cdots\nu^{n-1}(\mu)^{-1}=1$, and the exactness of (\ref{eqHinv}) yields the existence of $\lambda \in N_{\sigma}^0$ such that $\mu=\lambda\nu(\lambda)$. Then, the elements $(\rho_0\delta,\nu)$  and  $(\rho_0'\delta,\nu)$  are conjugate by $(\lambda,1)$. This yields Assertion $(2)$.
\end{proof}
We can now consider the result of Proposition \ref{Prop:ConjNsigma} from the geometric point of view, and obtain the key result of this section.
\begin{prop}\label{Prp:ConjIFFsameAction}
For $i=1,2$, let $h_i\in\dJo$ be a non-diagonalisable element of finite order $2n$ of the de Jonqui\`eres group, whose action on the basis of the fibration has order $n$ and is such that $(h_i)^n$ is equal to an involution of $\PGL(2,\C(x))$ which fixes a $($possibly reducible$)$ curve $\Gamma_i\subset \C^2$ such that the projection on the first factor induces a $(2\! :\! 1)$-map $\pi_i:\Gamma_i\rightarrow \C$. 
Then, the following conditions are equivalent:
\begin{enumerate}
\item[\upshape 1.]
The elements $h_1$ and $h_2$ are conjugate in the de Jonqui\`eres group $\dJo$.
\item[\upshape 2.]
There exist two birational maps $\psi:\Gamma_1\dasharrow \Gamma_2$, $\alpha:\C\dasharrow \C$ such that $\pi_2\psi=\alpha\pi_1$ and $\psi$ conjugates the restriction of $h_1$ on $\Gamma_1$ to the restriction of $h_2$ on $\Gamma_2$.
\end{enumerate}
\end{prop}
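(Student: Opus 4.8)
\emph{Plan.} I will prove the two implications separately. The implication $(1)\Rightarrow(2)$ is immediate, and the substance of the proposition is $(2)\Rightarrow(1)$, which I intend to deduce from Proposition~\ref{Prop:ConjNsigma} once the two elements have been brought to a common normal form.

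Suppose first that $h_1$ and $h_2$ are conjugate in $\dJo$, say $\phi h_1\phi^{-1}=h_2$ with $\phi\in\dJo$. Taking $n$-th powers, $\phi$ conjugates the involution $(h_1)^n$ to $(h_2)^n$, hence maps the fixed curve $\Gamma_1$ birationally onto $\Gamma_2$; I set $\psi=\phi|_{\Gamma_1}\colon\Gamma_1\dasharrow\Gamma_2$. Since every element of $\dJo$ preserves the pencil of fibers, the image $\alpha\in\PGL(2,\C)$ of $\phi$ under the second projection of $\dJo$ satisfies $\pi_2\phi=\alpha\pi_1$, so that $\pi_2\psi=\alpha\pi_1$; and restricting the relation $\phi h_1\phi^{-1}=h_2$ to $\Gamma_1$ gives $\psi\,(h_1|_{\Gamma_1})\,\psi^{-1}=h_2|_{\Gamma_2}$. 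This is exactly~$(2)$.

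For the converse I first normalise. By Proposition~\ref{Prop:JonqConj2cases} I may assume each $h_i=(\rho_i,\nu_i)$ is of the form of its case~$(2)$: $\nu_i$ is diagonal of order $n$, the involution $\sigma_i=(h_i)^n$ is attached to a polynomial $p_i$ with simple roots, and $\rho_i\in N_{\sigma_i}$. The base map $\alpha$ provided by~$(2)$ conjugates $\nu_1$ to $\nu_2$. The plan is then to conjugate $h_1$ by a lift of $\alpha$ into $\dJo$ through the embedding of Notation~\ref{Not:ActionPGL}, then by lifts of the residual base automorphisms commuting with $\nu_2$, re-normalising after each step via Proposition~\ref{Prop:JonqConj2cases}, so as to reach $\nu_1=\nu_2=\nu$ with $\psi$ fiber-preserving, i.e. $\pi_2\psi=\pi_1$. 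Such a $\psi$ is a $\C(x)$-birational isomorphism $\Gamma_1\dasharrow\Gamma_2$, so $p_1(x^n)$ and $p_2(x^n)$ differ by a square in $\C(x)^{*}$; the $1$-to-$1$ correspondence $\sigma\leftrightarrow\Gamma$ then allows one final fiber-preserving conjugation making $\sigma_1=\sigma_2=\sigma$ and $\Gamma_1=\Gamma_2=\Gamma$. After these reductions $\psi$ is a fiber-preserving automorphism of $\Gamma$, hence lies in $\{\mathrm{id},\iota\}$, where $\iota\colon(x,y)\mapsto(x,-y)$ is the involution of the double covering.

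It then remains to read off the component of $\rho_i$ from $h_i|_{\Gamma}$ and invoke Proposition~\ref{Prop:ConjNsigma}. By Lemma~\ref{Lem:Centraliser}(5) the subgroup $N_{\sigma}^{0}$ acts trivially on $\Gamma$ while $N_{\sigma}^{1}$ acts as $\iota$; since $\nu$ acts on $\Gamma$ fixing the $y$-coordinate, $h_i|_{\Gamma}$ is the lift $(x,y)\mapsto(\nu(x),\varepsilon_i y)$ with $\varepsilon_i=+1$ if $\rho_i\in N_{\sigma}^{0}$ and $\varepsilon_i=-1$ if $\rho_i\in N_{\sigma}^{1}$. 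A direct computation shows that $\iota$ commutes with both lifts, so conjugation by $\psi\in\{\mathrm{id},\iota\}$ is trivial; the hypothesis $\psi\,(h_1|_{\Gamma})\,\psi^{-1}=h_2|_{\Gamma}$ therefore forces $h_1|_{\Gamma}=h_2|_{\Gamma}$, i.e. $\varepsilon_1=\varepsilon_2$, so $\rho_1$ and $\rho_2$ lie in the same component $N_{\sigma}^{i}$. Proposition~\ref{Prop:ConjNsigma} (part~$1$ if $i=0$, part~$2$ if $i=1$) then yields that $(\rho_1,\nu)$ and $(\rho_2,\nu)$ are conjugate in $\dJo$, which proves~$(1)$. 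I expect the main obstacle to be the normalisation step: turning the abstract isomorphism $(\alpha,\psi)$ of fibered curves into honest $\dJo$-conjugations, while preserving the normal form of Proposition~\ref{Prop:JonqConj2cases}, requires careful bookkeeping of the successive base automorphisms together with the $\sigma\leftrightarrow\Gamma$ correspondence. Once the data coincide, the centrality of $\iota$ collapses the problem to the coset of $\rho_i$, which is exactly what Proposition~\ref{Prop:ConjNsigma} controls. The reducible case, when $p(x^n)$ is a square, should be treated identically, using part~$(3)$ of Lemma~\ref{Lem:Centraliser} in place of part~$(4)$.
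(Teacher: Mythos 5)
Your proposal is correct and follows essentially the same route as the paper: reduce both elements to the normal form of Proposition~\ref{Prop:JonqConj2cases}, extend the pair $(\psi,\alpha)$ to a conjugation in $\dJo$ making $\Gamma_1=\Gamma_2$, $\sigma_1=\sigma_2$ and the induced actions on $\Gamma$ coincide, then use Lemma~\ref{Lem:Centraliser}(5) to see that $\rho_1,\rho_2$ lie in the same component $N_\sigma^i$ and conclude by Proposition~\ref{Prop:ConjNsigma}. The extra discussion of $\psi\in\{\mathrm{id},\iota\}$ and the centrality of $\iota$ is a harmless elaboration of what the paper states more briefly.
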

\begin{proof}
Assume that $\Psi \in \dJo$ conjugates  $h_1$ to $h_2$. Then, $\Psi$ conjugates $(h_1)^n$ to $(h_2)^n$ and consequently sends $\Gamma_1$ on $\Gamma_2$ birationally. Thus $(1)\Rightarrow(2)$.

Assume the existence of $\psi$ and $\alpha$ as in Assertion 2. Up to conjugation in $\dJo$ we may assume (Proposition \ref{Prop:JonqConj2cases}) that $h_1=(\rho_1,\nu)$ and $(h_1)^n=\sigma$,  where $\nu=\left(\begin{array}{cc} 1 & 0 \\ 0& \xi\end{array}\right)$, $\sigma=\left(\begin{array}{cc} 0 & p(x^n) \\ 1& 0\end{array}\right)$, $\xi\in\C^{*}$ is a primitive $n$-th root of unity and $p$ is a polynomial with simple roots.

There exists an element $\Psi \in \dJo$ that extends $\psi$ and sends $\Gamma_{1}$ on $\Gamma_{2}$, hence conjugates $(h_1)^n$ to $(h_2)^n$ (this follows from the correspondence between the involutions and the hyperrelliptic curves, and can be found by hand directly, using Lemma~\ref{Lem:PGL2General}). 
We may therefore assume, after conjugation of $h_2$ by $\Psi$, that $\Gamma_2=\Gamma_1$,  $(h_2)^n=(h_1)^n=\sigma$, $h_2=(\rho_2,\nu)$ and that $h_1,h_2$ have the same action on $\Gamma_1$. Since $\nu(\sigma)=\sigma$ and $\sigma=\rho_i\cdot \nu(\rho_i)\cdots\nu^{n-1}(\rho_i)$ for $i=1,2$, both $\rho_1$ and $\rho_2$ belong to the centraliser $N_{\sigma}$ of $\sigma$ in $\PGL(2,\C(x))$. The fact that $\rho_1$ and $\rho_2$ have the same action  on $\Gamma_1$ implies that they belong either both to $N_{\sigma}^0$ or both to $N_{\sigma}^1$ (Lemma~$\ref{Lem:Centraliser}$(5)). Finally, we apply Proposition \ref{Prop:ConjNsigma} to deduce that $h_1$ and $h_2$ are conjugate in the de Jonqui\`eres group.
\end{proof}
\subsection{Geometric description -- action on conic bundles}\label{SubSec:GeomDesc}
Let $g\in \dJo$ be an element which is not diagonalisable in this group. Then, $g$ has even order $2n$ and $g^n$ fixes a bisection $\Gamma$ of the fibration.
In \S\ref{SubSec:dJo}, we explained how to see $g$ acting on a conic bundle. We prove now that the action of $(\Gamma,g|\Gamma)$ not only gives the conjugacy class in $\dJo$ (Proposition~\ref{Prp:ConjIFFsameAction}), but also describes the action of $g$ on the conic bundle. Recall that a singular fibre is twisted by an automorphism if this one exchanges the two components of the singular fibre. Assertions $1$ and $3$ of the following Lemma are well-known, whereas the two others are new.
\begin{lemm}\label{Lem:GeomCB}
Let $S$ be a smooth projective surface $S$, endowed with a conic bundle structure $\pi\colon S\to \p^1$. Let $g\in \Aut(S)$ be an automorphism of $S$ of finite order which preserves the set of fibres of $\pi$. Assume that $g$ has order $2n$, where $n$ is the order of the action of $g$ on the basis of the fibration and that the triple $(g,S,\pi)$ is minimal $($i.e.\ that there exists no set of disjoint $(-1)$-curves invariant by $g$ and contained in a finite set of fibres$)$.

Denote by $\Gamma\subset S$ the bisection fixed $($pointwise$)$ by $h=g^n$; let $p\in \mathbb{P}^1$, and let $F=\pi^{-1}(p)\subset S$. Then, the following occur:

\begin{enumerate}
\item[$1$.]
$F$ is a singular fibre twisted by $h$ if and only if $F\cap \Gamma$ consists of one point. 
\item[$2.$]
$F$ is a singular fibre twisted by $g$ and not by $h$ if and only if $F\cap \Gamma$ consists of two points, exchanged by $g|_\Gamma$.
\item[$3.$]
The number of singular fibres twisted by $h$ is equal to $2k$, where $k=0$ if $\Gamma$ is reducible and $k=g(\Gamma)-1$ when $\Gamma$ is irreducible, where $g(\Gamma)$ is the geometric genus of $\Gamma$.
\item[$4$.]
The number of singular fibres of $\pi$ is equal to $2k+r$, where $r\in\{0,1,2\}$. The number $r$ is equal to the number of singular fibres twisted by $g$ and not by $h$. 
\end{enumerate}
\end{lemm}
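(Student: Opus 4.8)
The plan is to exploit the fact that $h=g^n$ is an involution which acts trivially on the base of $\pi$ (hence fibrewise) and whose fixed locus $\Fix(h)$ is a \emph{smooth} curve, by the standard structure of involutions of a smooth complex surface. Since $\Gamma$ is the bisection fixed by $h$, it is a smooth component of $\Fix(h)$ and $\pi|_{\Gamma}\colon\Gamma\to\p^1$ is a degree-$2$ morphism. The base automorphism $\bar g\in\PGL(2,\C)$ is a rotation of order $n$, so it has exactly two fixed points while every other point of $\p^1$ has trivial stabiliser; together with minimality---every singular fibre is swapped by some power of $g$---this dichotomy organises the whole argument. The recurring computation is the linearisation of $h$ at the node $q$ of a singular fibre $F=F_1\cup F_2$.

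Assertions $1$ and $2$ follow from this local analysis. If $h$ twists $F$, then in suitable coordinates $(u,v)$ at $q$ one has $F_1=\{v=0\}$, $F_2=\{u=0\}$ and $h(u,v)=(v,u)$; its fixed locus $\{u=v\}$ is the smooth branch of $\Fix(h)$ through $q$, meets $F$ only at $q$, and makes $\pi|_{\Gamma}$ ramify there---giving one direction of assertion $1$. Conversely, if $h$ preserves each component it is diagonal at $q$, and inspecting the three possible sign patterns (using smoothness of $\Fix(h)$ and the fact that $dh$ has eigenvalue $-1$ transverse to any fixed curve) shows that $\Gamma$ then meets $F$ in two distinct points, never one; this yields the converse. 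For assertion $2$, over a $\bar g$-fixed point $g$ preserves $F$ and permutes the two points of $\Gamma\cap F$; since $g$ exchanges $F_1$ and $F_2$, a point of $\Gamma\cap F$ fixed by $g|_{\Gamma}$ could only be the node $q$, so two distinct intersection points must be exchanged. The delicate step is the converse: one must exclude $F$ smooth, which I would do by noting that $g|_F$ swapping the two points of $\Gamma\cap F$ forces $g|_F$ to be an involution, so $h|_F=(g|_F)^n$ either equals $g|_F$ (exchanging the two points) or is the identity (forcing $F\subset\Fix(h)$), both contradicting that $h$ fixes $\Gamma\cap F$ pointwise; minimality then forces $g$ itself, not merely a higher power, to twist the now-singular $F$.

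Assertion $3$ is then a Riemann--Hurwitz computation for the double cover $\pi|_{\Gamma}$. By assertion $1$ its branch points are exactly the nodes of the $h$-twisted fibres, and there is no further ramification: a tangency of $\Gamma$ to a \emph{smooth} fibre would force that fibre into $\Fix(h)$ and hence to meet the component $\Gamma$, contradicting smoothness of the fixed locus. Riemann--Hurwitz for the degree-$2$ map $\Gamma\to\p^1$ thus expresses the number of $h$-twisted fibres as an even number $2k$ determined by $g(\Gamma)$, which is the relation asserted; when $\Gamma$ is reducible, $\pi|_{\Gamma}$ is unramified and $k=0$.

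Finally, assertion $4$ is bookkeeping with the stabiliser dichotomy. Classify each singular fibre $F$ by the order $s$ of the stabiliser of $\pi(F)$ in $\langle\bar g\rangle\cong\Z{n}$; since $\bar g$ is a rotation, $s\in\{1,n\}$. The stabiliser of $F$ in $\langle g\rangle$ surjects onto $\Z{2}=\Sym(\{F_1,F_2\})$ by minimality, and since its generator maps to the nontrivial element, $h=g^n$ twists $F$ exactly when $s$ is odd. Hence every singular fibre over a non-fixed base point ($s=1$) is twisted by $h$; the only singular fibres possibly \emph{not} twisted by $h$ lie over the two $\bar g$-fixed points, where minimality forces $g$ to twist and $h$ twists iff $n$ is odd. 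Therefore the fibres twisted by $g$ but not by $h$ are precisely these non-$h$-twisted ones, their number $r$ satisfies $r\in\{0,1,2\}$, and adding them to the $2k$ fibres of assertion $3$ gives the total $2k+r$. The main obstacle throughout is the converse analysis of assertions $1$ and $2$---controlling the local action of $h$ at a node and excluding smooth fibres---for which the smoothness of $\Fix(h)$ and the eigenvalues of $dh$ at fixed points are the essential tools.
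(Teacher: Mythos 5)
Your proof is correct and follows essentially the same route as the paper's: linearisation of the involution $h$ at the node together with smoothness of $\Fix(h)$ for assertions 1--2, Riemann--Hurwitz for assertion 3, and the two fixed points of $\bar{g}$ on the base combined with minimality for assertion 4. The only cosmetic remark is that the exclusion of a smooth fibre tangent to $\Gamma$, which you place under assertion 3, is also the missing half of the converse of assertion 1 (the paper handles it there), but the argument you give for it is sound.
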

\begin{proof}
If $F$ is singular and twisted by $h$, then $h$ fixes one point of $F$, so $\Gamma\cap F$ consists of one point. Conversely, assume that $\Gamma\cap F$ consists of one point. Observe that no curve invariant by $h$ may be tangent to $\Gamma$ (since its image under the projection $S\to S/<h>$
locally splits  at the point of tangency). This implies that $F$ is singular and that $F\cap \Gamma$ is the singular point of $F$. If $F$ was not twisted by $h$, there would be three $h$-invariant tangent directions at this point, a contradiction. So $F$ is twisted by $h$ and we  get Assertion $1$.

Assume now that $F\cap \Gamma$ consists of two points exchanged by $g$. This implies that $n$ is even. If $F$ is smooth, it is isomorphic to $\p^1$, so $h=g^n$ acts identically on $F$, which is impossible since the fixed locus of $h$ is smooth and contains $\Gamma$. Thus, $F$ is singular, twisted by $g$ and not by $h$. The converse being obvious, we get Assertion~$2$.

Assertion $3$ follows from Riemann-Hurwitz formula and Assertion $1$.

Let us prove Assertion $4$. Since $(g,S,\pi)$ is minimal, any singular fibre of $\pi$ is twisted by a power of $g$. If this is not $h$, then it is $g$ (and all its odd powers). Moreover, this may occur only for two fibres, since $g$ acts on the basis with two fixed points (except when $n=1$ and $g=h$, in which case Assertion $4$ is trivially true).
\end{proof}
\begin{lemm}\label{Lemm:GeomCBk}
In Lemma~$\ref{Lem:GeomCB}$, no one of the three possibilities for $r\in\{0,1,2\}$ can be excluded.\end{lemm}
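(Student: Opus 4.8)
The plan is to prove the statement by exhibiting, for each value $r\in\{0,1,2\}$, an explicit non-diagonalisable de Jonqui\`eres element $g$ of even order $2n$ whose associated minimal conic bundle $(S,\pi)$ realises that value. By the construction of \S\ref{SubSec:dJo}, any such $g$ yields a triple $(g,S,\pi)$ satisfying the hypotheses of Lemma~\ref{Lem:GeomCB}, so it suffices to compute $r$ for well-chosen $g$. Assertions $2$ and $4$ of Lemma~\ref{Lem:GeomCB} reduce this to a finite check: $r$ equals the number of fixed points of $\bar g\in\PGL(2,\C)$ --- namely $x=0$ and $x=\infty$ --- over which the bisection $\Gamma$ consists of two points that are exchanged by $g|_{\Gamma}$.

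By Corollary~\ref{Cor:DescrRoots} and Proposition~\ref{Prop:ConjNsigma} I may prescribe the pair $(\Gamma,g|_{\Gamma})$ freely: for a polynomial $p$ with simple roots, $\Gamma$ has equation $y^2=p(x^n)$, and when $n$ is even both classes $N_{\sigma}^0$ and $N_{\sigma}^1$ produce an element $g$ with $g^n=\sigma$. Using Lemma~\ref{Lem:Centraliser}$(5)$, the restriction is then $g|_{\Gamma}\colon(x,y)\mapsto(\zeta x,(-1)^{\delta}y)$, where $\zeta=e^{2\im\pi/n}$ and $\delta\in\{0,1\}$ records whether $g\in N_{\sigma}^0$ or $g\in N_{\sigma}^1$. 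Thus I have two free parameters, $p$ (controlling the branch behaviour over $x=0$ and $x=\infty$) and $\delta$ (controlling the sign on $y$), with which to tune the exchange behaviour at the two fixed points.

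The swap-tests are then immediate by hand. Over $x=0$, which is a non-branch point exactly when $p(0)\neq 0$, the two points are $(0,\pm\sqrt{p(0)})$, and $g|_{\Gamma}$ exchanges them if and only if $\delta=1$. Over $x=\infty$ there are two points exactly when $\deg p(x^n)$ is even, and in the chart $v=y/x^{\deg p(x^n)/2}$ one computes that $g|_{\Gamma}$ acts by $v\mapsto(-1)^{\delta}(-1)^{\deg p}v$, so the two points are exchanged if and only if $\delta+\deg p$ is odd. With these two rules I exhibit three examples, all with $n=2$ (hence $g$ of order $4$): for $r=0$ take $\Gamma\colon y^2=x^4+1$ with $\delta=0$, so that $g|_{\Gamma}\colon(x,y)\mapsto(-x,y)$ fixes both points over $0$ and both over $\infty$; for $r=2$ take the same curve with $\delta=1$, so that $(x,y)\mapsto(-x,-y)$ exchanges both pairs; for $r=1$ take $\Gamma\colon y^2=x^6+1$ with $\delta=0$, where the two points over $\infty$ are exchanged (here $\deg p=3$ is odd) while those over $0$ are fixed. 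Each $\Gamma$ has positive genus ($1$, $1$ and $2$ respectively), each $g|_{\Gamma}$ has order $2=n$ and is visibly not a root of the hyperelliptic involution, and the existence of the corresponding $g$ is guaranteed by Proposition~\ref{Prop:ConjNsigma}.

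The only delicate point will be the computation at $x=\infty$, where one must pass to the smooth model and choose the correct local chart to decide whether the two points there are genuinely exchanged; the behaviour at $x=0$ and the existence of $g$ are routine applications of the earlier results. Once the three values are realised, the lemma follows. Note that I deliberately rely only on Assertions $2$ and $4$ of Lemma~\ref{Lem:GeomCB}, not on the count of $h$-twisted fibres, so the argument does not interact with the genus bookkeeping of Assertion $3$.
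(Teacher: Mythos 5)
Your proposal is correct and follows essentially the same route as the paper: exhibit explicit pairs $(\Gamma,g|_{\Gamma})$, invoke Proposition~\ref{Prop:ConjNsigma} for existence, and read off $r$ from Assertions $2$ and $4$ of Lemma~\ref{Lem:GeomCB} by checking the two fibres over the fixed points of the base action. The only differences are cosmetic -- the paper keeps one curve $y^2=x^{2n}-1$ and varies the automorphism, handling the points at infinity via the birational involution $(x,y)\dasharrow(x^{-1},\im yx^{-n})$, whereas you fix $n=2$, vary the curve and the sign $\delta$, and compute at infinity in an explicit chart -- and your parity rule at infinity checks out.
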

\begin{rema}
For another description of the number $r$, see \cite[Proposition 6.5]{bib:BlaLin}, especially Assertion $4$.\end{rema}
\begin{proof}
We provide distinct examples of pairs $(\Gamma,g|_{\Gamma})$, use Proposition~$\ref{Prop:ConjNsigma}$ to obtain the existence of an element $g\in \dJo$ associated to the pair, and use Assertion $4$ of  Lemma~\ref{Lem:GeomCB} to obtain $r$ by counting the pairs of points in the same fibre exchanged by $g|_{\Gamma}$. 

In each example, $\Gamma$ is the curve whose affine part has equation \[y^2=(x^n-1)(x^n+1)\] in $\C^2$, $\pi|_{\Gamma}$ corresponds to the projection $(x,y)\mapsto x$ and $g|_\Gamma$ is induced by a diagonal automorphism of $\C^2$. To see what happens on the two points at infinity, we use the birational map $\psi\colon(x,y)\dasharrow (x^{-1},\im yx^{-n})$ which sends birationally  $\Gamma$ on itself.

\begin{enumerate}
\item
$g|_\Gamma$ is induced by $\alpha\colon(x,y)\mapsto (e^{2\im\pi/n}x,y)$. On $x=0$, the two points of $\Gamma$ are fixed by $g|_\Gamma$. Conjugating $\alpha$ by $\psi$, we obtain $\alpha^{-1}$, so the two points at infinity are fixed by $g|_{\Gamma}$. This implies that $r=0$.
\item
$g|_\Gamma$ is induced by $\alpha\colon(x,y)\mapsto (e^{2\im\pi/n}x,-y)$. On $x=0$, the two points of $\Gamma$ are exchanged by $g|_\Gamma$. Conjugating $\alpha$ by $\psi$, we obtain $\alpha^{-1}$, so the two points at infinity are also exchanged by $g|_{\Gamma}$. This implies that $r=2$.
\item
$g|_\Gamma$ is induced by $\alpha\colon(x,y)\mapsto (e^{\im\pi/n}x,y)$. On $x=0$, the two points of $\Gamma$ are fixed by $g|_\Gamma$. Conjugating $\alpha$ by $\psi$, we obtain $(x,y)\mapsto (e^{-\im\pi/n}x,-y)$, so the two points at infinity are exchanged by $g|_{\Gamma}$. This implies that $r=1$.
\end{enumerate}
\end{proof}
Note that in \cite[Theorem 5.7]{bib:DoI}, Assertion (1) supposes that $r=0$. This is false, as the above examples show; the wrong argument is those given at the end of the proof of \cite[Lemma 5.6]{bib:DoI}, it is said "Since $g'$ is an even power it cannot switch any components of fibres". Although this argument is incorrect, the proof of \cite[Lemma 5.6]{bib:DoI} can be easily corrected by the fact that if $g'$ does not twist any singular fibre, $g'$ belongs to $G_0$ (in the notation of \cite{bib:DoI}), a case avoided by hypothesis. But the false argument is used in the proof of \cite[Theorem 5.7]{bib:DoI}. In fact, removing the sentence "and switches the components in all fibres" in this Theorem corrects it. This has been done in the new version of the paper that is available on the ArXiv.

\subsection{The proof of Theorem~\ref{Thm:ExplicitdJo}}
Proposition \ref{Prp:ConjIFFsameAction} gives the classification of conjugacy classes of non-diagonalisable elements of finite order of $\dJo$ in this group (note that for diagonalisable elements, the result is a simple exercise in linear algebra). The main invariant is the pair $(\Gamma,g|_{\Gamma})$ associated to $g$. If $\Gamma$ is an irreducible curve of positive genus, the pair is also an invariant of conjugacy in the Cremona group, but this is not the case otherwise (when the bisection $\Gamma$ is the union of $1$ or $2$ irreducible rational curves). In this case, the element $g$ is in fact conjugate to a linear diagonal automorphism of $\p^2$ (or $\p^1\times\p^1$) in the Cremona group, although it is not diagonalisable in the de Jonqui\`eres group. This was proved in  \cite{bib:BlaLin}, and was extended to finite Abelian groups. Using Propositions~\ref{Prop:ConjNsigma} and \ref{Prp:ConjIFFsameAction}, we could give an algebraic proof of this result, by considering the finitely many possibilities for $g$ and conjugating these directly by hand. However, we briefly recall here the main parts of the geometric proof.

\begin{prop}[\cite{bib:BlaLin}, Theorem 1]\label{Prp:Linearisation}
Let $g$ be an element of finite order $m\geq 1$ of $\Bir(\Pn)$ that leaves invariant a pencil of rational curves. 
The following conditions are equivalent:
\begin{enumerate}
\item[\upshape 1.]
No non-trivial power of $g$ fixes a curve of positive genus.
\item[\upshape 2.]
The element $g$ is birationally conjugate to an element of $\Aut(\Pn)$.
\item[\upshape 3.]
The element $g$ is birationally conjugate to the automorphism $(x:y:z)\mapsto (e^{2\im\pi/m}x:y:z)$ of $\Pn$.
\item[\upshape 4.]
The element $g$ is birationally conjugate to an element of $\Aut(\mathbb{F}_k)$, for some Hirzebruch surface $\mathbb{F}_k$.
\end{enumerate}

\end{prop}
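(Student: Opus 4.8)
The plan is to run the cycle of implications $(3)\Rightarrow(2)\Rightarrow(1)$, $(3)\Rightarrow(4)\Rightarrow(1)$, and then the substantial implication $(1)\Rightarrow(3)$, which closes everything. Three of these are immediate. For $(3)\Rightarrow(2)$ there is nothing to prove, since the map $(x:y:z)\mapsto(e^{2\im\pi/m}x:y:z)$ lies in $\Aut(\Pn)$; for $(3)\Rightarrow(4)$ one notes that this same map fixes the point $[1:0:0]$ and preserves the pencil of lines through it, hence lifts to an automorphism of the Hirzebruch surface $\mathbb{F}_1$ obtained by blowing up $\Pn$ at $[1:0:0]$. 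For $(2)\Rightarrow(1)$ and $(4)\Rightarrow(1)$ I would argue that a finite-order automorphism of $\Pn$ is diagonalisable and fixes pointwise only a union of points and lines, while a finite-order automorphism of a Hirzebruch surface fixes only rational curves (fibres, sections, or a diagonal); since the genus of a pointwise-fixed curve is preserved under birational conjugation, no power of $g$ can then fix a curve of positive genus.

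For the converse $(1)\Rightarrow(3)$ I would first conjugate so that the invariant pencil becomes the set of lines through a point, placing $g\in\dJo$, and then invoke Proposition~\ref{Prop:JonqConj2cases} to reduce $g$, up to conjugation in $\dJo$, to one of the two normal forms. If $g$ is of type~$(1)$ it is a diagonal element of $\PGL(3,\C)$, hence linearisable, so $(2)$ holds; the passage to the precise form required by $(3)$ then follows from the uniqueness, up to conjugation in $\Bir(\Pn)$, of the linearisable conjugacy class of a given order, established in \cite{BeaBla} and recalled in Theorem~\ref{Thm:ExplicitdJo}(1). If instead $g$ is of type~$(2)$, then $\sigma=g^n$ fixes the curve $\Gamma$ of equation $y^2=p(x^n)$ by Corollary~\ref{Cor:DescrRoots}. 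Since $\sigma$ is a power of $g$, hypothesis $(1)$ forces $\Gamma$ to be rational, and because $p$ has simple roots this rationality constrains $n$ and $\deg p$ to finitely many cases.

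In these remaining cases I would pass to the minimal conic bundle model $(S,\pi,g')$ of \S\ref{SubSec:dJo}. As $\Gamma$ is rational, Lemma~\ref{Lem:GeomCB}(3) shows that $\sigma=g^n$ twists no singular fibre, so Lemma~\ref{Lem:GeomCB}(4) leaves at most $r\leq 2$ singular fibres, all twisted by odd powers of $g$ only. When $r=0$ the bundle has no singular fibre, hence is a $\p^1$-bundle, i.e.\ a Hirzebruch surface $\mathbb{F}_k$; this already gives $(4)$, and the induced automorphism, having rational fixed locus, contracts down to a diagonal automorphism of $\Pn$, giving $(2)$ and then $(3)$. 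When $r\in\{1,2\}$, minimality forbids any $g$-equivariant contraction inside $S$, so one must instead pass to a different birational model; the geometry is nonetheless bounded (Picard rank at most $4$, with a rational bisection), which restricts the situation to finitely many explicit surfaces on which $g$ can be conjugated by hand to a diagonal automorphism, the conjugacy being controlled by Propositions~\ref{Prop:ConjNsigma} and~\ref{Prp:ConjIFFsameAction}.

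I expect the cases $r\in\{1,2\}$ to be the main obstacle. There $g$ is genuinely non-diagonalisable inside $\dJo$ and the conic bundle is $g$-minimal with authentic singular fibres, so the linearising birational map cannot preserve the pencil and must exploit the genus-$0$ geometry of $\Gamma$; the delicate point is to verify that each of these finitely many normal forms really does linearise, rather than producing a new non-linearisable de Jonqui\`eres class. This is precisely the content that the geometric analysis of \cite{bib:BlaLin} supplies, and which the algebraic route via Propositions~\ref{Prop:ConjNsigma} and~\ref{Prp:ConjIFFsameAction} could reproduce by a finite, if tedious, direct computation.
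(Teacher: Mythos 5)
Your cycle of easy implications is fine, and your reduction of $(1)\Rightarrow(3)$ via Proposition~\ref{Prop:JonqConj2cases} to the case of a type~(2) element whose bisection $\Gamma\colon y^2=p(x^n)$ is rational matches the paper's strategy. But the core of the argument then breaks down at the claim that ``as $\Gamma$ is rational, Lemma~\ref{Lem:GeomCB}(3) shows that $\sigma=g^n$ twists no singular fibre.'' This is false when $\Gamma$ is \emph{irreducible} of genus $0$: by Riemann--Hurwitz applied to the double cover $\Gamma\to\p^1$ branched over the twisted fibres, an irreducible bisection of genus $g(\Gamma)$ forces $\sigma$ to twist $2k=2(g(\Gamma)+1)$ fibres, so a rational irreducible $\Gamma$ gives $k=1$ and \emph{two} fibres twisted by $\sigma$ (only a reducible $\Gamma$ gives $k=0$). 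Consequently the conic bundle can have up to $2k+r=4$ singular fibres, not at most $2$, and the four-fibre configuration (two fibres twisted by $\sigma$, two by $g$, forcing $n=2$) is precisely the hardest case. The paper excludes it by the parity relation $2k/n\equiv r\pmod 2$ from \cite{bib:BlaLin}, which fails for $k=1$, $n=r=2$; nothing in your argument addresses this case, and it is the one place where a genuinely new non-linearisable class could a priori appear.

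Two further points. First, your assertion that for $r\in\{1,2\}$ ``minimality forbids any $g$-equivariant contraction inside $S$'' is mistaken: minimality of $(g,S,\pi)$ only forbids contracting components of singular fibres, and the standard move (used in the paper, via \cite[Lemma~5.1]{bib:DoI}) is to contract a $g$-invariant set of disjoint $(-1)$-\emph{sections} when $\pi$ has at most $3$ singular fibres, landing on a del Pezzo surface of degree $\geq 6$ where linearisation is checked case by case. Second, Propositions~\ref{Prop:ConjNsigma} and~\ref{Prp:ConjIFFsameAction} only control conjugacy \emph{within} $\dJo$; since the linearising map must leave $\dJo$ (as you note), they cannot by themselves supply the linearisation, so the appeal to them at the end does not close the gap.
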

\begin{proof}The automorphism $(x,y)\mapsto (e^{2\im\pi/m}x,y)$ of $\C^2$ extends to an automorphism of $\mathbb{F}_k$ for any $k$, so $(3)$ implies $(4)$.
The implication $(4)\Rightarrow (2)$ is an easy exercise, using elementary links of conic bundles at points fixed by $g$.  
The implication $(2)\Rightarrow (3)$ was proved in \cite{BeaBla} using the action of $\GL(2,\mathbb{Z})$ on $(\C^{*})^2$ via the action by conjugation of the group of monomial elements of $\Bir(\Pn)$ on the group of diagonal elements of $\Aut(\Pn)$, and was then generalised to any dimension and to elements of non-necessarily finite order in \cite{bib:BlaManMat}.

The hardest part consists in proving that $(1)$ implies the other conditions. Let us explain the argument.

If $g$ is diagonalisable in the de Jonqui\`eres group, then we are done. We may assume that $g$ has order $2n$ and acts on the basis of the fibration with order $n$. We view $g$ as an automorphism of a conic bundle $(S,\pi)$, and assume that $g$ acts minimally on the conic bundle, i.e.\ that every singular fibre is twisted by a power of $g$ (we say that an automorphism twists a singular fibre $F=F_{1}\cup F_{2}$ if it exchanges the components $F_{1}$ and $F_{2}$). If $\pi$ has no singular fibre, $S$ is a Hirzebruch surface and we obtain $(4)$. A quick computation (see \cite[Lemma~5.1]{bib:DoI}) shows that if $\pi$ has $1$, $2$ or $3$ singular fibres, then we may contract a $g$-invariant set of disjoint sections of self-intersection $-1$. This conjugates $g$ to an automorphism of a del Pezzo surface of degree $\geq 6$. A study of each possibility shows that $g$ is conjugate to an automorphism of $\p^2$ (\cite[Proposition 9.1]{bib:BlaLin}). 

We may thus assume that $\pi$ has at least $4$ singular fibres.
We have $\pi g=\bar{g}\pi$ for some automorphism $\bar{g}$ of $\p^1$ of order $n$. Since the involution $g^n=\sigma$ fixes no curve of positive genus, it can twist $0$ or $2$ singular fibres, depending on whether it fixes a reducible or an irreducible curve (\cite[Lemma~6.1]{bib:BlaLin}). The only possibility is that $\pi$ has $4$ singular fibres, two twisted by $\sigma$ and two by $g$ (which are the fibres of the points fixed by $\bar{g}$). In particular, $n=2$, and $g$ permutes the two fibres twisted by $\sigma$. Computing the intersections in the Picard group, we find that if $g^n=\sigma$ twists $2k$ fibres and $g$ twists $r$ fibres, then $2k/n\equiv r \pmod{2}$ (\cite[Proposition~6.5]{bib:BlaLin}). This creates a problem of parity, since $r=n=2$ and $k=1$.  
\end{proof}
We are now able to summarise the results of this section, by proving Theorem~\ref{Thm:ExplicitdJo}.
\begin{proof}[Proof of Theorem~$\ref{Thm:ExplicitdJo}$]
Assertion 1 follows from Proposition~\ref{Prp:Linearisation}; let us prove Assertion 2.
We fix some non-linearisable de Jonqui\`eres element, that we call $g$. After conjugation, we may assume that $g$ is an element of type (2) of Proposition~$\ref{Prop:JonqConj2cases}$, i.e.\ $g=\left(\rho, \left(\begin{array}{cc} 1 & 0 \\ 0& \xi\end{array}\right)\right)$, where $\xi \in \C^{*}$ has finite order $n\geq 1$, and $g^n=\sigma=\left(\begin{array}{cc} 0 & p(x^n) \\ 1& 0\end{array}\right)\in \PGL(2,\C(x))$. Since $\rho$ commutes with $\sigma$ (Proposition~$\ref{Prop:JonqConj2cases}$), $\rho$ is equal to $\left(\begin{array}{cc} a& (-1)^\delta bh \\ b& (-1)^\delta a\end{array}\right)\in \PGL(2,\C(x))$, for some $a,b\in \C(x)^{*}$, $\delta\in\{0,1\}$, by Lemma~\ref{Lem:Centraliser}.
The curve $\Gamma$ fixed by $\sigma$ has equation $y^2=p(x^n)$ and has positive genus by Proposition~$\ref{Prp:Linearisation}$. The action of $g$ on $\Gamma$ has order $n$, preserves the fibers of the $g_1^2$, and is not a root of the involution associated to the $g_1^2$. Furthermore, any such action can be obtained in this way, by Proposition~$\ref{Prop:ConjNsigma}$. The $1$-to-$1$ correspondence between conjugacy classes and actions is provided by Proposition~$\ref{Prp:ConjIFFsameAction}$.
\end{proof}
\section{Cyclic groups, not of de Jonqui\`eres type}\label{Sec:CyclicGrNotDJ}
In this section we study cyclic groups of finite order of $\Bir(\Pn)$ which do not preserve any pencil of rational curves. We remind the reader of the following classical result:
\begin{prop}
Let $g\in \Bir(\Pn)$ be an element of finite order, not of de Jonqui\`eres type. Then, there exists a birational map $\phi:\Pn\dasharrow S$ such that \begin{enumerate}
\item[\upshape 1.]
$S$ is a del Pezzo surface (projective smooth surface such that $-K_S$ is ample);
\item[\upshape 2.]
the conjugate $h=\phi g \phi^{-1}$ is an automorphism of $S$;
\item[\upshape 3.]
$\rkPic{S}^h=1$.\end{enumerate}
\end{prop}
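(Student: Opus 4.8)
The plan is to regularise $g$, run an equivariant minimal model program, and then use the hypothesis to discard the conic bundle alternative.

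First I would regularise the action. Since $g$ has finite order, $G=\langle g\rangle$ is a finite cyclic subgroup of $\Bir(\Pn)$, and by the classical regularisation of finite group actions there is a birational map $\phi_0\colon \Pn\dasharrow X$ onto a smooth projective (rational) surface $X$ such that $h_0=\phi_0\,g\,\phi_0^{-1}$ belongs to $\Aut(X)$. One may for instance resolve simultaneously the indeterminacy points of all the powers of $g$, or pass to a smooth projective model of the quotient; the finiteness of $G$ guarantees that finitely many blow-ups suffice.

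Next I would run the equivariant minimal model program for the pair $(X,\langle h_0\rangle)$, contracting $\langle h_0\rangle$-orbits of disjoint $(-1)$-curves. This produces an equivariant birational morphism onto a minimal $\langle h_0\rangle$-surface $S$; composing with $\phi_0$ gives $\phi\colon\Pn\dasharrow S$ and $h=\phi\,g\,\phi^{-1}\in\Aut(S)$, which already yields Assertion~2. By the classification of minimal $G$-surfaces (Manin--Iskovskikh), a minimal rational $\langle h_0\rangle$-surface falls into exactly two types: a del Pezzo surface with $\rkPic{S}^h=1$, or a conic bundle $S\to\p^1$ with $\rkPic{S}^h=2$ on which $h$ permutes the fibres.

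It then remains to exclude the conic bundle case, and this is where the hypothesis enters. A conic bundle structure $S\to\p^1$ carries a pencil of rational curves (its fibres) preserved by $h$; transporting it back along $\phi$ yields a pencil of rational curves on $\Pn$ preserved by $g$, i.e.\ $g$ would be of de Jonqui\`eres type, contrary to assumption. Hence $S$ must be the del Pezzo alternative, which gives Assertions~1 and~3 simultaneously. The substantive inputs here are the regularisation step and the Manin--Iskovskikh dichotomy (the termination of the equivariant program in one of the two model types being the main technical ingredient); the conceptual core specific to this statement is merely the short observation that an element not of de Jonqui\`eres type cannot preserve the rational pencil of a conic bundle, which forces the del Pezzo model.
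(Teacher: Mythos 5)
Your proposal is correct and follows essentially the same route as the paper: regularise the finite-order element to an automorphism of a smooth rational surface, contract invariant sets of $(-1)$-curves to reach a minimal pair, invoke the Manin--Iskovskikh dichotomy, and rule out the conic bundle alternative because $g$ is not of de Jonqui\`eres type. No gaps.
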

\begin{proof}
Since $g$ has finite order, we may conjugate it to an automorphism $h$ of some projective smooth rational surface $S$ (see for example \cite{bib:DFE}). After contracting invariant sets of $(-1)$-curves, we may assume that the pair $(h,S)$ is minimal. Then, by Mori theory \cite{bib:Man} either $S$ is a del Pezzo surface and $\rkPic{S}^h=1$ or $h$ preserves a rational fibration; this latter possibility is excluded by hypothesis. 
\end{proof}
We may restrict our attention to the study of del Pezzo surfaces of degree $1$, $2$ or $3$, as the following simple lemma  shows:
\begin{lemm}\label{Lem:Deg4}
Let $S$ be a del Pezzo surface of degree $\geq 4$, and let $g\in \Aut(S)$. Then, $g$ preserves a pencil of rational curves.
\end{lemm}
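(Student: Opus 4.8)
My plan is to reduce to the case where the pair $(g,S)$ is minimal and then exploit the Mori-theoretic dichotomy already recorded in the preceding proposition. Preserving a pencil of rational curves is a birational property that passes through blow-downs: if $\eta\colon S\to S'$ contracts a $g$-invariant set of disjoint $(-1)$-curves, then the strict transforms of the members of a $g'$-invariant pencil of rational curves on $S'$ again form a $g$-invariant pencil of rational curves on $S$. I may therefore contract such a set and assume that $(g,S)$ is minimal; since contractions only raise the degree we keep $K_S^2\geq 4$. By Mori theory \cite{bib:Man} (the dichotomy of the previous proposition), a minimal pair is either a conic bundle --- whose fibres already give a $g$-invariant pencil of rational curves, so we are done --- or a del Pezzo surface with $\rkPic{S}^g=1$. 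It thus remains to handle minimal del Pezzo surfaces of degree $\geq 4$ with $\rkPic{S}^g=1$.

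Several degrees are either impossible or immediate. The surface $\mathbb{F}_1$ (degree $8$) has a unique $(-1)$-curve, and the del Pezzo surface of degree $7$ has a distinguished central $(-1)$-curve in its chain of three; both are necessarily $g$-invariant, so these surfaces are never minimal. If $S=\Pn$, every $g\in\Aut(\Pn)=\PGL(3,\C)$ fixes a point $p$ (an eigenvector of a lift), and the pencil of lines through $p$ is a $g$-invariant pencil of rational curves. If $S=\p^1\times\p^1$, then either $g$ preserves each of the two rulings, each of which is then an invariant pencil of rational curves, or $g$ exchanges them; in the latter case $g^*$ fixes the class $f$ of the $(1,1)$-curves (where $-K_S=2f$), acts linearly on the three-dimensional system $|f|$, and hence preserves the line of $|f|$ spanned by two eigenvectors, i.e.\ a $g$-invariant pencil of rational $(1,1)$-curves.

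The remaining and genuinely delicate cases are the minimal del Pezzo surfaces of degree $4$, $5$ and $6$ with $\rkPic{S}^g=1$, which do occur (for instance an automorphism of order $6$ on the degree $6$ surface, or of order $5$ on the degree $5$ surface, acts without nonzero fixed vector on $K_S^{\perp}$). For such $S$ \emph{no} pencil of rational curves on $S$ itself is $g$-invariant: the only $g$-fixed classes in $\Pic{S}$ are the multiples of $K_S$, and the general member of $|-K_S|$ is an elliptic curve. One must therefore leave $S$ and produce a $g$-equivariant birational map to a conic bundle (or to $\Pn$). The plan is to build an explicit Sarkisov link: blow up a short $g$-orbit of points (a fixed point, or the base points of a suitable invariant family of conics) and then contract a $g$-invariant set of $(-1)$-curves on the transformed surface, landing on a $g$-conic bundle. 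This is exactly the content of Iskovskikh's analysis of minimal del Pezzo $G$-surfaces \cite{bib:DoI}, which shows that in degree $\geq 4$ with $G$ cyclic the surface is never $G$-rigid.

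The main obstacle is precisely this last step. In degrees $\leq 3$ the condition $\rkPic{S}^g=1$ is an honest obstruction to being of de Jonqui\`eres type, whereas in degrees $4$, $5$, $6$ it is compatible with that property, and the invariant pencil surfaces only after a nontrivial birational modification. I expect the cleanest route is to quote the classification of minimal cyclic del Pezzo $G$-surfaces of \cite{bib:DoI} --- equivalently, to run through the finite list of possible actions of $g^*$ on $\Pic{S}$ and check that each of the degree $4$, $5$, $6$ cases admits the required link --- rather than to redevelop the Sarkisov program from scratch.
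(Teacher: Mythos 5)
Your reduction to minimal pairs and the Mori dichotomy is a legitimate start, but the argument breaks down exactly where you locate the difficulty. First, the claim that on a minimal del Pezzo surface of degree $4$, $5$ or $6$ with $\rkPic{S}^g=1$ \emph{no} $g$-invariant pencil of rational curves exists on $S$ itself is false. It is true that the class of the members of an invariant pencil must be a multiple of $-K_S$, but this does not force the general member of the pencil to be the general member of $|-mK_S|$: a sub-pencil consisting of \emph{singular} anticanonical curves can perfectly well have rational general member. Indeed, by the Lefschetz fixed point formula $g$ fixes a point $p\in S$, and the linear subsystem $\Lambda_p\subset|-K_S|$ of anticanonical curves singular at $p$ is $g$-invariant of projective dimension $\geq (9-r)-3=6-r\geq 1$ (writing $S$ as a blow-up of $r\leq 5$ points of $\Pn$, after first blowing up a fixed point if $S\cong\p^1\times\p^1$); since $g$ acts linearly on $\Lambda_p$ it preserves a line spanned by two eigenvectors, i.e.\ a pencil, and its members have arithmetic genus $1$ with a singular point, hence are rational. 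This is the paper's entire proof, and it makes the minimality reduction, the analysis of $\rkPic{S}^g$, and the Sarkisov links unnecessary.

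Second, even granting your framework, the cases you identify as ``genuinely delicate'' are not actually treated: you defer them to the classification of minimal cyclic del Pezzo $G$-surfaces in \cite{bib:DoI} and to an unexecuted case-by-case construction of links. As written this is a programme rather than a proof, and it invokes precisely the heavy machinery that this lemma is meant to supply a short independent argument for (the paper even remarks that the result ``is also a consequence of the huge work of \cite{bib:DoI}''). To repair your write-up, either carry out the link constructions in degrees $4$, $5$, $6$ explicitly, or---much better---replace the whole second half by the fixed-point/linear-system argument above.
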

\begin{proof}
By the Lefschetz formula, there exists at least one point of $S$ which is fixed by $g$  \cite[page 248, Corollary 1]{bib:AtiBott}.
If $S\cong \mathbb{P}^1\times\mathbb{P}^1$, the blow-up of a fixed point conjugates $g$ to an automorphism of a del Pezzo surface of degree $7$. 
We may thus assume the existence of a birational morphism $\eta:S\rightarrow \Pn$, which is the blow-up of $r$ points of $\Pn$, for $0\leq r\leq 5$. Then, $g$ preserves the linear system of curves equivalent to the anticanonical divisor $-K_S$, which are the strict pull-backs by $\eta$ of the cubics passing through the $r$ blown-up points. Furthermore, $g$ also preserves the linear subsystem $\Lambda_p$ corresponding to the cubics being singular at $\eta(p)$, where $p\in S$ is fixed by $g$. Since the dimension of $\Lambda_p$ is $6-r\geq 1$, and the action of $g$ on it is linear, we can find a subsystem of dimension $1$, invariant by $g$; this completes the proof.
\end{proof}
\begin{rema}
This simple result  is also a consequence of the huge work of {\upshape \cite{bib:DoI}}.
\end{rema}

Recall (\cite{bib:Kol}, Theorem III.3.5) that a del Pezzo surface of degree $3$ (respectively $2$, $1$) is isomorphic to a smooth hypersurface of degree $3$ (respectively $4$, $6$) in the projective space $\mathbb{P}^3$ (respectively in $\mathbb{P}(1,1,1,2)$, $\mathbb{P}(1,1,2,3)$). Furthermore, in each of the $3$ cases, any automorphism of the surface is the restriction of an automorphism of the ambient space. 

For any automorphism $g$ of a del Pezzo surface of degree $1$, $2$ or $3$, we can choose coordinates $w,x,y,z$  on the projective spaces such that $g$ is a diagonal automorphism $(w:x:y:z)\mapsto (\alpha w:\beta x:\gamma y:\delta z)$ -- that we will denote by $\DiaG{\alpha}{\beta}{\gamma}{\delta}$ -- of the weighted projective space which preserves the equation of the  surface, this equation being one of the following:
\begin{eqnarray}
0&=&L_3(w,x,y,z),\\
w^2&=&L_4(x,y,z),\\
w^2&=&z^3+z\cdot L_4(x,y)+L_6(x,y),
\end{eqnarray}
where $L_i$ denotes a homogeneous form of degree $i$.

 A systematic study of all automorphisms is therefore possible, and was carried out in \cite{bib:Seg} (for degree $3$), \cite{bib:DolgTopic} and \cite{bib:Bar} (degree $2$),  and especially in \cite{bib:DoI}  where all the results are summarised (this was also done in my PhD thesis, see \cite{bib:JBCR}). 
 
Now, to eliminate the groups which are not of de Jonqui\`eres type is more subtle. This was done in \cite{bib:DoI} (and in my thesis), using tools like the Lefschetz formula or Weyl groups.
Summing up, we get that any finite cyclic subgroup of $\Bir(\Pn)$ which is not of de Jonqui\`eres type is conjugate to a group of Table~$\ref{ListN1}$. Note that the $29$ families of Table~$\ref{ListN1}$ are those of \cite{bib:JBCR}, and also those of \cite[Table 9]{bib:DoI}, except for one family, present in \cite[Table 9]{bib:DoI} under the notation $D_5$, which consists of one group of order $8$ acting on a del Pezzo surface of degree $4$, and is in fact of de Jonqui\`eres type (by Lemma~\ref{Lem:Deg4}).

To determine the conjugacy classes among the $29$ families (which is done in the proof of Theorem~\ref{Thm:ConjAmong29} below), we need a few classical results on plane cubic curves, of which we remind the reader:
\begin{lemm}\label{Lem:CubicHesse}
Let $\Gamma\subset \Pn$ be a smooth cubic curve, and let $\Aut(\Pn)_{\Gamma}$ be the group of automorphisms of $\Pn$ which preserve $\Gamma$. Then, the following occur.
\begin{enumerate}
\item[\upshape 1.]
There exists $g\in \Aut(\Pn)$ such that $g(\Gamma)$ has Hesse form, i.e.\ that its equation is $x^3+y^3+z^3 +\lambda xyz=0$, for some $\lambda\in \C$.
\item[\upshape 2.]
If $\Gamma'\subset \Pn$ is isomorphic to $\Gamma$, then there exists $g\in \Aut(\Pn)$ such that $g(\Gamma)=\Gamma'$.
\item[\upshape 3.]
The group $\Aut(\Pn)_{\Gamma}$ acts transitively on the $9$ inflexion points of $\Gamma$.
\item[\upshape 4.]
If $\tau,\sigma\in \Aut(\Pn)_{\Gamma}$ induce two translations on  $\Gamma$ which are conjugate in $\Aut(\Gamma)$, then $\tau$ and $\sigma$ are conjugate in $\Aut(\Pn)_{\Gamma}$.
\end{enumerate}
\end{lemm}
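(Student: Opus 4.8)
The plan is to treat $\Gamma$ as an elliptic curve, fixing once and for all an inflexion point $O$ as the origin of the group law, and to exploit the dictionary between the geometry of $\Gamma\subset\Pn$ and the group structure. I would first record two preliminary facts. The restriction homomorphism $\Aut(\Pn)_\Gamma\to\Aut(\Gamma)$ is injective, since a non-trivial element of $\PGL(3,\C)$ fixes pointwise at most a line together with an isolated point, hence cannot restrict to the identity on a smooth cubic. The crucial fact is a lifting principle, via a line-bundle computation: writing $L=\mathcal{O}_\Gamma(1)$ for the degree-$3$ polarisation induced by the embedding, an automorphism $f\in\Aut(\Gamma)$ is induced by an element of $\Aut(\Pn)_\Gamma$ if and only if $f^*L\cong L$, because in that case $f$ acts linearly on $H^0(\Gamma,L)$, a three-dimensional space identified with the linear forms on $\Pn$, and therefore extends to $\PGL(3,\C)$. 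Since $L\cong\mathcal{O}_\Gamma(3O)$, a translation $t_a$ satisfies $t_a^*L\cong L$ exactly when $a\in\Gamma[3]$, and an automorphism fixing $O$ always satisfies $f^*L\cong L$; thus translations by $3$-torsion and all origin-preserving automorphisms lift to $\Aut(\Pn)_\Gamma$. Finally, a point $P$ is an inflexion point iff the tangent divisor $3P$ satisfies $3P\sim 3O$, i.e.\ iff $P\in\Gamma[3]$, so the nine inflexion points are precisely the nine $3$-torsion points.

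For Assertion $1$, I would invoke the classical fact that the nine inflexion points form a Hesse configuration ($9$ points, $12$ lines, each line meeting three of the points). After moving one inflexion point and a suitable collinear triple into standard position by an element of $\Aut(\Pn)$, the pencil of cubics through the nine points becomes the Hesse pencil $x^3+y^3+z^3+\lambda xyz$, of which $\Gamma$ is a member; this yields the required $g$. Assertion $2$ then follows from the lifting principle: given an isomorphism $f\colon\Gamma\xrightarrow{\sim}\Gamma'$, the bundles $f^*\mathcal{O}_{\Gamma'}(1)$ and $\mathcal{O}_\Gamma(1)$ both have degree $3$, hence differ by a translation $t_a$ of $\Gamma$; replacing $f$ by $f\circ t_a$ we may assume $f^*\mathcal{O}_{\Gamma'}(1)\cong\mathcal{O}_\Gamma(1)$, so $f$ induces a linear isomorphism between the two spaces of linear forms and thus extends to $g\in\Aut(\Pn)$ with $g(\Gamma)=\Gamma'$.

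Assertion $3$ is immediate from the preliminary remarks: the inflexion points are the nine $3$-torsion points, every translation $t_a$ with $a\in\Gamma[3]$ lifts to an element of $\Aut(\Pn)_\Gamma$, and $\Gamma[3]$ acts simply transitively on itself. For Assertion $4$, observe first that any element of $\Aut(\Pn)_\Gamma$ preserves $L$, so the translations it can induce are exactly the $t_a$ with $a\in\Gamma[3]$; write $\tau|_\Gamma=t_a$ and $\sigma|_\Gamma=t_b$. In $\Aut(\Gamma)=\Gamma\rtimes\Aut(\Gamma,O)$, two translations are conjugate iff $b=\psi(a)$ for some origin-preserving $\psi$, because conjugation by a translation fixes every translation while conjugation by $\psi$ sends $t_a$ to $t_{\psi(a)}$. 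Such a $\psi$ fixes $O$, hence preserves $L$ and lifts to $\tilde\psi\in\Aut(\Pn)_\Gamma$; then $\tilde\psi\tau\tilde\psi^{-1}$ and $\sigma$ both restrict to $t_b$ on $\Gamma$, and by injectivity of the restriction map they are equal.

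The part I expect to require the most care is the lifting principle, namely showing that exactly the automorphisms of $\Gamma$ preserving the degree-$3$ class extend to $\Pn$, together with the correct identification of which translations (the $3$-torsion ones) satisfy $t_a^*L\cong L$. Once this is pinned down, in combination with the injectivity of $\Aut(\Pn)_\Gamma\to\Aut(\Gamma)$, all four assertions follow quickly; the only genuinely separate input is the Hesse-configuration normal form used in Assertion $1$, which I would simply quote as classical.
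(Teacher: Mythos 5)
Your argument is correct, but it takes a genuinely different route from the paper's. The paper disposes of Assertions 1 and 2 by citing the classical literature on the Hesse pencil, proves Assertion 3 by putting $\Gamma$ in Hesse form and exhibiting the nine inflexion points explicitly as one orbit of the group generated by coordinate permutations and $(x:y:z)\mapsto(x:\omega y:\omega^2 z)$, and proves Assertion 4 by reducing to the existence in $\Aut(\Pn)_{\Gamma}$ of an element of order $m\in\{2,4,6\}$ acting on $\Gamma$ with fixed points, which it then writes down case by case on explicit normal forms. You instead set up a single lifting criterion -- $f\in\Aut(\Gamma)$ extends to $\Aut(\Pn)_{\Gamma}$ if and only if $f^{*}\mathcal{O}_{\Gamma}(1)\cong\mathcal{O}_{\Gamma}(1)$, via the induced action on $H^0(\Gamma,\mathcal{O}_{\Gamma}(1))$ -- together with injectivity of the restriction map, and you derive Assertions 2, 3 and 4 uniformly from it: the inflexion points are $\Gamma[3]$, exactly the $3$-torsion translations and all origin-preserving automorphisms lift, and conjugating elements in $\Aut(\Gamma)=\Gamma\rtimes\Aut(\Gamma,O)$ can always be chosen origin-preserving and hence liftable. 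Only Assertion 1 still relies on the classical Hesse-configuration normal form, which is also where the paper leans on its reference. Your approach buys conceptual uniformity and avoids all the explicit coordinate computations (and in particular makes Assertion 4 independent of the case split $m=2,4,6$); the paper's approach buys concreteness, producing the actual matrices realizing the lifts, which is in keeping with the explicit normal forms used throughout its Section 4. I see no gap: the computation $t_a^{*}\mathcal{O}(3O)\cong\mathcal{O}(3(-a))$, the identification of the inflexion points with $\Gamma[3]$, the description of conjugacy of translations in the semidirect product, and the injectivity of $\Aut(\Pn)_{\Gamma}\to\Aut(\Gamma)$ (the fixed locus of a non-trivial element of $\PGL(3,\C)$ lies in a line plus a point, so cannot contain a smooth cubic) are all sound.
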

\begin{proof}
The first two assertions are very classical, proofs can be found in \cite{bib:ADol}. Putting $\Gamma$ in its Hesse form, the $9$ inflexion points become easy to compute; these are the orbits by $[0:1:-1]$ of the subgroup of $\Aut(\Pn)$ generated by the permutation of coordinates and $(x:y:z)\mapsto (x:\omega y:\omega^2 z)$, where $\omega$ is a primitive $3$-rd root of unity. This yields Assertion $3$. The group $\Aut(\Gamma)$ is isomorphic to $\Gamma\rtimes \Z{m}$, for $m\in \{2,4,6\}$, and is generated by the translations (the abelian group $\Gamma$) and one element of order $m$ with fixed points. To prove Assertion $4$, it thus suffices to find in $\Aut(\Pn)_{\Gamma}$ an element of order $m$ which acts on $\Gamma$ with fixed points. We put $\Gamma$ in its Hesse form, and see that $(x:y:z)\mapsto (y:x:z)$ is suitable if $m=2$. If $m=6$, then we may assume that $\Gamma$ is the Fermat cubic $x^3+y^3+z^3=0$, and $(x:y:z)\mapsto (y:x:\omega z)$ is suitable. If $m=4$, we see that $(x:y:z)\mapsto (x:-y:\im z)$ acts with fixed points on the smooth cubic curve $\Gamma'$ of equation $xz^2+y^3+x^2y$; we use Assertion $2$ to conclude.
\end{proof}
\begin{proof}[Proof of Theorem~\ref{Thm:ConjAmong29}]
The first assertion follows from the beginning of this section.

Recall that the set of points fixed by an automorphism of a surface is smooth (as can be shown by local analytic linearisation). We can therefore apply the adjunction formula, and the canonical embeddings into weighted projective spaces to deduce the genus of the fixed curves.

(i) On a del Pezzo surface of degree $3$, i.e.\ a cubic surface in $\mathbb{P}^3$, any fixed curve is elliptic, and any birational morphism to $\mathbb{P}^2$ sends it on a smooth cubic curve (for further details and generalisations, see \cite{bib:JBMich}). 

(ii) A del Pezzo surface of degree $2$ is canonically embedded into $\mathbb{P}(2,1,1,1)$ as a surface of equation $w^2=L_4(x,y,z)$. The curve given by $w=0$ is equivalent to $-2K_S$, and is a Geiser curve isomorphic to a smooth quartic plane curve (but is sent by any birational morphism to $\Pn$ on a sextic with $7$ double points, see \cite{bib:BPV2} for further details). The other fixed curves are elliptic curves, equivalent to $-K_S$.

(iii) A del Pezzo surface of degree $1$ is canonically embedded into $\mathbb{P}(3,1,1,2)$ as a surface of equation $w^2=z^3+zL_4(x,y)+L_6(x,y)$. The curve given by $w=0$ is equivalent to $-3K_S$, and is a Bertini curve. The curve given by $z=0$ is equivalent to $-2K_S$ and is a (hyperelliptic) curve of genus $2$. The other possible fixed curves are elliptic curves, equivalent to $-K_S$.

The observations (i), (ii), (iii) and a simple examination of each of the $29$ cases directly yields Assertion $4$. 

Let us prove Assertion~$2$. Let $g$ be a generator of a group $G$ of one of the $29$ families; we distinguish three cases. (a) If $G$ belongs to family $\#9$, then $g^3$ fixes an elliptic curve, and $g$ acts on this curve as a translation of order $3$, so does not preserve the fibers of any $g_1^2$. (b) If $G$ belongs to family $\#n$ for $n\in \{3,4,5,7,8,12,14,17,19,21,22,25\}$, the $m$-torsion of $G$ fixes a curve of genus $1$ or $2$, for some integer $m\geq 3$. (c) In the remaining families, some non-trivial element of $G$ fixes an non-hyperelliptic curve (Geiser or Bertini curve). In cases (a) through (c), and for any de Jonqui\`eres element $h$, we have $\NFCA(g)\not=\NFCA(h)$  (by the description of $\NFCA(h)$ given in Theorem~\ref{Thm:ExplicitdJo}); this proves Assertion 2.

Since the subgroups of two different families of the same order fix different types of curves, Assertion~$3$ is clear. Note that this could also be proved using the fact that each family appears in \cite[Table 9]{bib:DoI} with the conjugacy class of its action in the Weyl group, and that all of them are different.

There remains to study each of the $29$ families and to show that if two groups belonging to the same family have similar $\NFCA$ invariants, there exists an isomorphism between the two surfaces on which the groups act, that conjugates the two groups; the explicit parametrisations will follow from this study.

Let us explain the approach and notation of the remaining part. For each family, we take a group $G$, acting on the surface $S$, embedded in a weighted projective space ($\mathbb{P}^3$, $\mathbb{P}(2,1,1,1)$ or $\mathbb{P}(3,1,1,2)$), such that the pair $(G,S)$ represents an element of the family. We choose an integer $r$, denote by $\Gamma\subset S$ the curve fixed by the $r$-torsion of $G$, and explain which kind of curve it is. Then, we take another group $G'$ which acts on $S'$, whose $r$-torsion fixes $\Gamma'\subset S'$ (we only add a prime to the group, surface and parameters), and assume that there exists an isomorphism $\alpha:\Gamma\rightarrow \Gamma'$ which conjugates the group $H=G|_{\Gamma}$ to $H'=G'|_{\Gamma'}$. We prove that $\alpha$ can be chosen so that it extends to an automorphism of the weighted space which sends $S$ on $S'$ and conjugates $G$ to $G'$. For some families (when $G|_{\Gamma}$ is trivial, or unique as in $\#8$, $\#12$ and $\#13$) the same results work by assuming only that $\Gamma$ is isomorphic to $\Gamma'$, without taking into account $H,H'$; this simplifies the parametrisation.

In families $\# 1$ through $\# 5$, and in $\#7$ and $\#17$, each non-trivial element of the group fixes the same curve, and the isomorphism class of the curve determines the isomorphism class of the surface, so $r=|G|$. For example, in family $\#5$, suppose that $S,S'$ are surfaces given by equations $w^2=L_4(x,y)+z^4$ and $w^2=L_4'(x,y)+z^4$ respectively, and that $G,G'\subset \Aut(S)$ are generated by $z\mapsto \im z$. The two corresponding curves are elliptic curves, double coverings of $\mathbb{P}^1$ by means of the projection on $(x:y)$, ramified over the four roots of respectively $L_4$ and $L_4'$. If the curves are isomorphic, there exists $\beta\in\Aut(\mathbb{P}^1)$ such that $\beta(L_4)=L_4'$. Clearly $\beta$ extends to $\mathbb{P}(2,1,1,1)$, and conjugates $G$ to $G'$. A similar simple argument works for families $\#1$ through $\#4$, and for $\#7$ and $\#17$.

[$\#6$, $r=2$] The $2$-torsion fixes the Bertini curve $\Gamma$, and $H$ is generated by an involution with fixed points. Since the curve determines the surface, we may assume that $G$, $G'$ act on the same surface $S$. The action of $\Aut(S)$ on $\Gamma$  yields the classical exact sequence $1\rightarrow \Z{2} \rightarrow \Aut(S) \rightarrow \Aut(\Gamma) \rightarrow 1$; consequently if $H,H'\subset\Aut(\Gamma)$ are conjugate, so are $G,G'\subset \Aut(S)$.

[$\#8$, $r=3$] The curve $\Gamma$ is the elliptic curve whose equation in the plane $w=0$ of $\mathbb{P}^3$ is $F=x^3+y^3+xz^2+\lambda yz^2$, and  $H\subset \Aut(\Gamma)$ is generated by an involution which fixes four points, one of these being the inflexion point $(0:0:1)$. Since $\Gamma\cong \Gamma'$, there exists $\beta\in \Aut(\Pn)$ such that $\beta(F)=F'$ and we may assume that $\beta$ fixes the inflexion point $(0:0:1)$ (Lemma~\ref{Lem:CubicHesse}, Assertions 2 and 3). The extension of $\beta$ to an automorphism of $\mathbb{P}^3$ therefore conjugates $G$ to $G'$. 

[$\#9$, $r=2$] The curve $\Gamma$ is elliptic and its equation in the plane $x=0$ of $\mathbb{P}^3$ is $w^3+y^3+z^3+\lambda yz^2$; the group $H\subset \Aut(\Gamma)$ is generated by a translation of order $3$, induced by the automorphism $(w:y:z)\mapsto (w:\omega y:\omega^2z)$ of the plane. Since $\Gamma\cong \Gamma'$, there exists $\beta\in \Aut(\Pn)$ such that $\beta(\Gamma)=\Gamma'$ (Lemma~\ref{Lem:CubicHesse}, Assertion 2).  Then, $\beta$ conjugates $H$ to a subgroup of $\Aut(\Gamma')$ which is conjugate to $H'$, and up to a change of $\beta$ we may therefore assume that $\beta$ conjugates $H$ to $H'$(Lemma~\ref{Lem:CubicHesse}, Assertion 4). Since $\beta$ commutes with $\rho:(w:y:z)\mapsto (w:\omega y:\omega^2z)$, we may choose -- after composition by a power $(w:y:z)\mapsto (y:w:z)$ -- that $\beta$ is diagonal,  which implies that it is a power of $\rho$. Consequently, $G$ and $G'$ are  equal.

[$\#10$, $\#11$, $r=2$] In both families, $\Gamma$ is the Geiser curve of the surface, $H\subset \Aut(\Gamma)$ has order $3$ and fixes respectively $4$ or $2$ fixed points of $\Gamma$. The isomorphism class of the Geiser curve determines that of the surface, and the classical exact sequence $1\rightarrow \Z{2}\rightarrow \Aut(S)\rightarrow \Aut(\Gamma)\rightarrow 1$ shows that the conjugacy class of $H\subset \Aut(\Gamma)$ determines the conjugacy class of $G$ in $\Aut(S)$.

[$\#12$, $r=3$] The curve $\Gamma$ has equation $w^2=y^4+z^4+\lambda y^2z^2=F_4(y,z)$ in the weighted plane $x=0$, and is therefore elliptic; furthermore $H\subset\Aut(\Gamma)$ is generated by a translation of order $2$. Denote by $\nu\in\Aut(\Gamma)$ (respectively $\nu'\in\Aut(\Gamma')$) the involution induced by $w\mapsto -w$; this is the involution which corresponds to the $g_1^2:\Gamma\rightarrow \mathbb{P}^1$ given by $(w:y:z)\dasharrow (y:z)$. By hypothesis, there exists an isomorphism $\alpha:\Gamma\rightarrow \Gamma'$ that conjugates $H$ to $H'$. Since all the non-translation involutions are conjugate in $\Aut(\Gamma)$, there exists an isomorphism $\beta:\Gamma\rightarrow \Gamma'$ that conjugates  $\nu$ to $\nu'$. Let us now prove that we can choose $\beta$ so that it also conjugates $H$ to $H'$. If $\Gamma$ is a general elliptic curve, this is clear since $H$ is in the center of $\Aut(\Gamma)$ and $H$, $H'$ are conjugate by hypothesis. Otherwise, we can choose a root of $\nu$ (of order $4$ or $6$) which conjugates $\beta H\beta^{-1}$ to $H'$, which completes the argument. Since $H$ conjugates $\nu$ to $\nu'$, it is an isomorphism between the fibres of the corresponding two $g_1^2$, and therefore extends to an automorphism of the plane $x=0$ (isomorphic to $\mathbb{P}(2,1,1)$) that sends $w^2-F_4(y,z)$ onto $w^2-F_4'(y,z)$. The extension of this automorphism to an automorphism of $\mathbb{P}(2,1,1,1)$ follows directly.

[$\#13$, $\#14$, $r=3$] In both families, $\Gamma$ has genus $2$ and belongs to the weighted plane $z=0$. Furthermore, $H$ is generated by an involution which is respectively that induced by the $g_1^2$, with $6$ fixed points, and an involution with $4$ fixed points. Since $\alpha:\Gamma\rightarrow \Gamma'$ preserves the $g_1^2$, it extends to the plane $z=0$ and hence to $\mathbb{P}(3,1,1,2)$.
Note that in fact, for family $\#13$, $r=2$ is also possible.

[$\#15$, $\#16$, $\#18$, $\#20$, $\#23$, $r=2$] In these five families, $\Gamma$ is the Bertini curve, and $H$ is a cyclic group of order respectively $3$, $3$, $4$, $5$,  $6$, which fixes respectively $3$, $1$, $2$, $4$, $2$ points of $\Gamma$. Once again, the isomorphism class of $\Gamma$ determines that of the surface and the classical exact sequence induced by the action on the Bertini curve yields the result.

Each of the remaining families ($\#19$, $\#21$, $\#22$, and $\#24$, through  $\#29$) contains a single element.
\end{proof}
\section{Elements of finite order, not of de Jonqui\`eres type}\label{Sec:EltGrNotDJ}
In this section, we distinguish the generators of the cyclic groups which are not of de Jonqui\`eres type.

\begin{proof}[Proof of Theorem~\ref{Thm:EltGrNotDJ}]
Suppose that $g,h$ are conjugate by some birational transformation $\varphi$ of~$S$. Then, since $\varphi$ is $G$-equivariant, we may factorise it into a composition of automorphisms and elementary $G$-equivariant links (\cite{bib:IskMori}, Theorem 2.5). Since our surface is of Del Pezzo type ($S\in \{\mathbb{D}\}$ in the notation of \cite{bib:IskMori}), the first link is of type $\mathrm{I}$ or $\mathrm{II}$. The classification of elementary links (\cite{bib:IskMori}, Theorem 2.6) shows that the only possiblity for the link is to be the Geiser or Bertini involution of a surface obtained by the blow-up of one or two points invariant by $G$. A Geiser (respectively Bertini) involution of a Del Pezzo surface of degree $2$ (respectively $1$) commutes with any automorphism of the surface, thus the elementary link conjugates $g$ to itself. Consequently, $g$ and $h$ are conjugate by some element of $\Aut(S)$.

For $n\in\{1,2\}$ there is only one generator since $G\cong \Z{2}$. 

If $n=9$ (respectively $n=11$), then the automorphism $(w:x:y:z)\mapsto (w:x:z:y)$ of $\mathbb{P}^3$ (respectively of $\mathbb{P}(2,1,1,1)$) induces an element of $\Aut(S)$ which conjugates the two generators of $G\cong\Z{6}$.

If $n\not=\{6,9,11,16\}$, two distinct generators of $G$ are induced by two automorphisms of the weighted space which are diagonal, with different eigenvalues (up to a weighted multiplication). Consequently, distinct generators of $G$ are not conjugate by an element of $\Aut(S)$.

If $n=16$, we write explicitly the form $L_2$ of degree~$2$, and obtain the expression $w^2=z^3+\lambda x^2y^2z+(ax^6+bx^3y^3+cy^6)$ for the equation of $S$ in $\mathbb{P}(3,1,1,2)$, with some $a,b,c,\lambda \in \C$. Furthermore, $ac\not=0$ since $S$ is not singular. Choose $\alpha,\gamma,\mu\in \C^{*}$ such that $\alpha^3=a,\gamma^3=c,\mu^2=ac$. Then, the automorphism $(w:x:y:z)\mapsto (\mu w:\gamma y:\alpha x:\alpha\beta y)$ of $\mathbb{P}(3,1,1,2)$ induces an automorphism of $S$ which conjugates the two generators of $G\cong\Z{6}$.

The remaining case is $n=6$. We write explicitly the two forms $L_2,L_2'$ of degree~$2$, and obtain  $w^2=z^3+z(ax^4+bx^2y^2+cx^4)+xy(a'x^4+b'x^2y^2+c'y^4)$ for the equation of $S$ in $\mathbb{P}(3,1,1,2)$, with some $a,b,c,a',b',c'\in \C$. Since $S$ is not singular, $a,a'$ are not both zero, and $b,b'$ are not both zero.

Assume now that the two generators $g_1,g_2$ of $G$ are conjugate by $\tau\in\Aut(S)$. 
Recall that $g_1$, $g_2$ are induced respectively by the automorphisms $\overline{g_1}:(w:x:y:z)\mapsto (\im w:x:-y:-z)$ and $\overline{g_2}:(w:x:y:z)\mapsto (\im w:x:-y:-z)$ of $\mathbb{P}(3,1,1,2)$. 
Furthermore, $\tau$ extends to $\overline{\tau} \in \Aut(\mathbb{P}(3,1,1,2))$, which conjugates $\overline{g_1}$ to $\overline{g_2}$, and thus $\overline{\tau}$ is equal to $(w:x:y:z)\mapsto (\mu w:\gamma y:\alpha x:\nu z)$, for some $\alpha,\gamma,\mu,\nu\in \C^{*}$. 
 Consequently, the expression $\mu^2 w^2 =\nu^3 z^3+\nu z(\alpha^4c x^4+\alpha^2\gamma^2bx^2y^2 +\gamma^4a y^4)+\alpha\gamma xy(\alpha^4 c' x^4+\alpha^2\gamma^2b' x^2y^2+\gamma^4a'y^4)$ is a multiple of the equation of $S$. 
 
 This shows that the two generators of $G$ are conjugate in $\Aut(S)$ if and only if there exist $\alpha,\gamma,\mu,\nu\in \C^{*}$ such that the following two vectors of $\C^8$ are linearly dependent:
 \[\begin{array}{rrrrrrrrrl}(\!\!&1,& 1,& a,& b,& c,& a',&b',&c'&\!\!),\vspace{0.1 cm}\\
 (\!\!&\!\mu^2,& \!\nu^3,& \!\nu\cdot \alpha^4c,& \!\nu\cdot \alpha^2 \gamma^2b,& \!\nu\cdot \gamma^4a,&\!\alpha\gamma\cdot \alpha^4c',&\!\alpha\gamma\cdot \alpha^2\gamma^2 b',&\!\alpha\gamma\cdot \gamma^4a'&\!\!).\end{array}\]
 
 We now consider different cases, and decide whether the two generators are conjugate. 
 
 a) Assume that $a=c=0$, which implies that $a',c'\in \C^{*}$. Then, choosing $\alpha,\gamma,\mu,\nu$ such that $\alpha^2=a',\gamma^2=c',\nu=\alpha\gamma,\mu^2=\nu^3$ gives a positive answer.
 
 b) The case $a'=c'=0$ is similar. We choose $\alpha,\gamma,\mu,\nu$ such that $\alpha^2=a,\gamma^2=c,\nu=\alpha\gamma,\mu^2=\nu^3$.
 
 c) If $a=a'=0$, then $cc'\not=0$, and the answer is negative. The same occurs for $c=c'=0$, which implies that $aa'\not=0$.
 
 d) There remain the cases where $a,c,a',c'\in \C^{*}$. Linear dependence implies the following conditions:

 \begin{itemize}
 \item[\upshape 1.] $(\alpha/\gamma)^4=(a/c)^2=(a'/c')^2 \Rightarrow a/c=\pm a'/c'$;
 \item[\upshape 2.]
 $\nu=c/a\cdot c'/a'\cdot (\alpha/\gamma)^4 \cdot \alpha\gamma=(c/a)\cdot (a'/c')\cdot \alpha\gamma$;
 \item[\upshape 3.]
 $ab\gamma^2=bc \alpha^2 \Rightarrow b\left((\alpha/\gamma)^2-a/c\right)=0$;
 \item[\upshape 4.]
  $a'b'\gamma^2=b'c'\alpha^2 \Rightarrow b'\left((\alpha/\gamma)^2-a'/c'\right)=0$.
 \end{itemize}
 
 If $a/c=a'/c'$, choosing $\alpha,\gamma,\mu,\nu$ such that $\alpha^2=a,\gamma^2=c,\nu=\alpha\gamma,\mu^2=\nu^3$ gives a positive answer. If $a/c=-a'/c'$ and $bb'\not=0$, the conditions above show that the answer is negative. If $a/c=-a'/c'$ and $b'=0$ (respectively $b=0$), choosing  $\alpha,\gamma,\mu,\nu$ such that $\alpha^2=a,\gamma^2=c$ (respectively $\alpha^2=a'$, $\gamma^2=c'$) and $\nu=-\alpha\gamma,\mu^2=\nu^3$ gives a positive answer.
\end{proof}
\section{The importance of the $\NFCA$ invariant}\label{Sec:ProofNFCA}
\begin{proof}[Proof of Theorem~$\ref{Thm:NFCA}$]
Let $g,h\in \Bir(\Pn)$ be two elements of the same finite order, such that $\NFCA(g)=\NFCA(h)$. 

Assume that $g,h$ are both of de Jonqui\`eres type. If neither $g$ nor $h$ is linearisable, Assertion~$2$ of Theorem~$\ref{Thm:ExplicitdJo}$ shows that $g$ and $h$ are conjugate. If one of the two elements -- say g -- is linearisable, then $\NFCA(g)$ is a sequence of empty sets, and so is $\NFCA(h)$; Theorem~$\ref{Thm:ExplicitdJo}$ shows that $h$ is linearisable, and conjugate to $g$.

Assume now that at least one of the two elements is not of de Jonqui\`eres type. Assertion~2 of Theorem~\ref{Thm:ConjAmong29} implies that neither $g$ nor $h$ is of de Jonqui\`eres type. Then, Assertion~6 of Theorem~\ref{Thm:ConjAmong29} shows that the groups generated respectively by $g$ and $h$ are conjugate. However, in general $g$ and $h$ are not conjugate (Theorem~\ref{Thm:EltGrNotDJ}).
\end{proof}

\end{document}